\newtheorem{theorem}{Theorem}[section]
\newtheorem{lemma}[theorem]{Lemma}
\newtheorem{definition}[theorem]{Definition}
\newtheorem*{theorem*}{Theorem}
\newcommand{\forceP}{\mathbb{P}}
\newcommand{\forceQ}{\mathbb{Q}}
\newcommand{\forceR}{\mathbb{R}}
\newcommand{\ZFC}{\mathsf{ZFC}}
\newcommand{\ZFP}{\mathsf{ZF}^-}
\newcommand{\MRP}{\mathsf{MRP}}
\newcommand{\CH}{\mathsf{CH}}
\newcommand{\NS}{\text{NS}_{\omega_1}}
\newcommand{\AD}{\mathsf{AD}}
\newcommand{\mouseM}{\mathcal{M}}
\newcommand{\mouseN}{\mathcal{N}}
\def\undertilde#1{\mathord{\vtop{\ialign{##\crcr
$\hfil\displaystyle{#1}\hfil$\crcr\noalign{\kern1.5pt\nointerlineskip}
$\hfil\tilde{}\hfil$\crcr\noalign{\kern1.5pt}}}}}
\title{$\text{NS}_{\omega_1}$ saturated, $\Delta_1 ( \{ \omega_1 \} )$-definable and a $\Delta^1_4$-definable well-order of the reals}
\author{ Stefan Hoffelner$^{1}$\footnote{This research was funded in whole by the Austrian Science Fund (FWF) Grant-DOI 10.55776/P37228. }  }
\date{
    $^1$TU Wien \\
    \today }
\begin{document}

\maketitle

\begin{abstract}
Assuming $M_1$, the canonical inner model with one Woodin cardinal exists, we construct a model in which the nonstationary ideal on $\omega_1$ is $\aleph_2$-saturated, $\Delta_1$-definable with $\omega_1$ as the only parameter and there is a
$\Sigma^1_{4}$-definable well-order of the reals. This implies that contrary to the assumption that $\NS$ is $\aleph_1$-dense, the assumption of $\NS$ being saturated and $\Delta_1$-definable does not imply any nice structural properties for the projective subsets of the reals.
\end{abstract}

\section{Introduction}

This work deals with definability questions in the presence of the nonstationary ideal on $\omega_1$ ($\NS$) being $\aleph_2$-saturated. Recall that $\NS$ is ($\aleph_2$-)saturated if every antichain in the Boolean algebra $P(\omega_1) \slash \NS$ has length $\le \aleph_1$. In other words, every sequence of stationary sets on $\omega_1$, $\vec{S} = (S_{\alpha} \mid \alpha < \kappa)$ with the additional property that for every $\alpha \ne \beta$, $S_{\alpha} \cap S_{\beta}$ is nonstationary, has length at most $\omega_1$. It is safe to say that the question of the possible consistency of $\NS$ being saturated turned out to be one of the most fruitful in the history of set theory, and greatly contributed to the formulation and development of many, now central notions of modern set theory (among them the forcing axiom Martin's Maximum and the $\forceP_{\text{max}}$-technique).
These developments showed that there are deep connections between strong structural properties of the nonstationary ideal on $\omega_1$ and the behaviour of the real numbers which are mostly due to H. W. Woodin, with ``$\NS$ is saturated and there is a measurable cardinal$"$ implies that $\CH$ fails, being arguably the most prominent example of these results.

The main motivation for this article comes from his following result (see \cite{W}, theorem 6.81) . Recall first that the nonstationary ideal is $\omega_1$-dense if the Boolean algebra $P(\omega_1) \slash \NS$ has a dense subset of size $\aleph_1$. 
\begin{theorem*}[Woodin]
Assume that $\NS$ is $\omega_1$-dense, then $\AD$ holds in $L(\mathbb{R})$.
On the other hand, if $\AD$ is true in $L(\mathbb{R})$ then forcing with $\forceQ^{*}_{\text{max}}$  over $L(\mathbb{R})$ will produce a model of $\ZFC$ where $\NS$ is $\omega_1$-dense.
\end{theorem*}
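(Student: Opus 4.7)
The plan is to attack the two implications separately. For the forward direction that $\NS$ being $\omega_1$-dense implies $\AD^{L(\mathbb{R})}$, I would aim to prove a strong generic absoluteness statement and then invoke Woodin's theorem linking generic absoluteness to inner-model determinacy. The $\omega_1$-density hypothesis yields a dense subset $D$ of $P(\omega_1)\setminus \NS$ of size $\aleph_1$, so $P(\omega_1)/\NS$ is forcing equivalent to a poset of cardinality $\aleph_1$. One then identifies generic filters for this quotient with generic iterations of countable elementary substructures, which allows one to argue that the theory of $L(\mathbb{R})$ with real parameters is invariant under forcing with $P(\omega_1)/\NS$; the $\Sigma^2_1$ (or stronger boldface) absoluteness obtained in this way, combined with the large-cardinal strength implicit in the hypothesis, then forces $L(\mathbb{R}) \models \AD$.

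For the backward direction, I would employ the $\mathbb{P}_{\max}$-variation methodology. One defines $\forceQ^*_{\max}$ so that its conditions are countable, iterable models equipped with an ideal witnessing $\omega_1$-density of the nonstationary ideal in that model, with the order given by iteration maps respecting this extra structure. The verifications needed are: (i) there are, up to equivalence, only $\aleph_1$-many conditions, so $\forceQ^*_{\max}$ itself has a dense set of size $\aleph_1$, using that under $\AD^{L(\mathbb{R})}$ every set of reals admits a sufficiently definable coding; (ii) $\forceQ^*_{\max}$ is $\sigma$-closed and homogeneous, and forcing with it over $L(\mathbb{R})$ yields a $\ZFC$-model with $\omega_1$ preserved; and (iii) in the generic extension, the natural map sending a condition to a representative stationary set is a dense embedding of $\forceQ^*_{\max}$ into $P(\omega_1)/\NS$, so that the latter inherits an $\aleph_1$-sized dense subset.

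The main obstacle is (iii), the \emph{density lemma} for $\forceQ^*_{\max}$: every stationary subset of $\omega_1$ in the extension must be captured by some condition in the generic filter. This rests on the \emph{iteration lemma}, which in turn requires $\AD^{L(\mathbb{R})}$ to produce, for any given stationary set appearing in the extension, an iterable model of ``there exists an $\omega_1$-dense ideal on $\omega_1$'' realizing it, together with a verification that generic iterations propagate the denseness of the ideal through $\omega_1$ many stages. This is where the full strength of determinacy and the Martin--Steel iteration machinery are consumed, and parallels the most delicate portion of the $\mathbb{P}_{\max}$ analysis in \cite{W}, with the additional burden of tracking the denseness predicate along iterations.
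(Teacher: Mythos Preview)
The paper does not prove this theorem. It is stated in the introduction purely as a motivating background result and is attributed to Woodin with an explicit reference (``see \cite{W}, theorem~6.81''); no argument for it is given anywhere in the paper. Consequently there is no ``paper's own proof'' for your proposal to be compared against.

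As a side remark, your sketch is a reasonable high-level outline of the ideas behind Woodin's actual argument in \cite{W}: the forward direction does go through generic absoluteness of $L(\mathbb{R})$ derived from the structural properties of $P(\omega_1)/\NS$, and the reverse direction is indeed the $\forceQ^{*}_{\text{max}}$ analysis, with the density lemma being the crux. But none of this is content of the present paper, whose contribution begins only after this theorem has been quoted.
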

In particular, if $\NS$ is $\omega_1$-dense, then there is no projectively definable well-order of the reals. 

It is natural to ask whether the assumption of $\NS$ being $\omega_1$-dense can be weakened to still obtain some, possibly limited, nice structural properties of the projective sets of reals.
Note that $\NS$ being $\omega_1$-dense has two main implications for the nonstationary ideal, namely it implies that $\NS$ is saturated, and also  $\NS$ becomes $\Delta_1$ definable over $H(\omega_2)$, using the dense family as a parameter. Goal of this article is the proof of the following theorem, which shows that $\Delta_1$-definability of $\NS$ together with $\NS$ being saturated alone do not suffice to impose nice structural properties on the projective sets of reals.

\begin{theorem*}
Work over $M_1$, the canonical inner model with one Woodin cardinal. Then there is a generic extension of $M_1$ in which 
\begin{enumerate}
\item $\NS$ is saturated,
\item $\NS$ is $\Delta_1$-definable over $H(\omega_2)$ with $\omega_1$ as the only parameter,
\item there is a $\Delta^1_4$-definable well-order of the reals.
\end{enumerate}
\end{theorem*}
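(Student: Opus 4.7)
The strategy is to perform a carefully designed length-$\delta$ forcing iteration over $M_1$, where $\delta$ is the Woodin cardinal of $M_1$, that simultaneously combines three ingredients: a Shelah/Woodin-style semi-proper iteration which collapses $\delta$ to $\omega_2$ and yields saturation of $\NS$; a Jensen-type stationary-set coding machinery which writes a prescribed well-order of the reals into the pattern of stationary subsets of $\omega_1$; and a dedicated coding which ensures that for every stationary $S\subseteq\omega_1$ in the final model there is a $\Sigma_1$-over-$H(\omega_2)$ witness for its stationarity, using only $\omega_1$ as a parameter. Since ``$S$ is nonstationary'' is already $\Sigma_1(\{\omega_1\})$ over $H(\omega_2)$ (as the existence of a disjoint club), this third coding supplies the missing $\Sigma_1$-witness for stationarity and so upgrades $\NS$ to $\Delta_1$-definable.

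\textbf{Main steps.} First, I would work inside $M_1$ and exploit its canonical $\Sigma^1_3$-good well-order of the reals, coming from Steel's fine-structural analysis of $M_1$, to fix a bookkeeping enumeration of all relevant $\forceP$-names and potential stationary sets along the iteration. Second, I would define a revised countable-support iteration $\langle \forceP_\alpha,\dot{\forceQ}_\alpha : \alpha<\delta\rangle$ whose successor stages alternate between (a) semi-proper collapses and antichain-sealing steps responsible for saturation (the Shelah ingredient using the Woodin $\delta$), (b) coding blocks of Jensen type that inscribe the $\alpha$-th real into the stationary pattern along a canonical ladder system, and (c) coding blocks that, for every anticipated $\forceP_\alpha$-name $\dot{S}$ of a stationary set, force a distinguishing stationary witness making stationarity of $S$ readable from $H(\omega_2)$ by a $\Sigma_1$-formula with $\omega_1$ as the only parameter. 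Third, in the final model $M_1[G]$, I would verify: (i) saturation of $\NS$ persists past the coding blocks---this requires showing that each coding step is semi-proper and preserves the stationary sets it relies on; (ii) the $\Delta_1(\{\omega_1\})$-definition of $\NS$ over $H(\omega_2)^{M_1[G]}$ is correct; and (iii) the induced well-order $<^*$ of the reals is $\Delta^1_4$: ``$x<^* y$'' is expressed by a $\Sigma_1$-over-$H(\omega_2)$ statement about the coding pattern, which, using that $\omega_1^{M_1[G]}$ is computable from a real in the sense of $M_1$-mice (a byproduct of the collapse of $\delta$), translates to $\Sigma^1_4$; uniqueness of the coding pattern and the $M_1$-seed yield the complementary $\Pi^1_4$.

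\textbf{Main obstacle.} The principal difficulty is designing the coding so that it is simultaneously compatible with (1) the semi-proper iteration producing saturation, (2) the preservation of the stationary sets the coding itself uses, and (3) a bookkeeping that anticipates \emph{every} stationary set appearing in $M_1[G]$, including those introduced cofinally along the iteration. Addressing (3) requires a $\diamondsuit$-style anticipation device at $\delta$ together with reflection coming from the Woodin cardinal; meanwhile (1)+(2) demand a preservation theorem for the specific coupling of Shelah's antichain-sealing forcing and the coding components, which is the most delicate verification. A secondary but genuine difficulty is pinning the complexity of the well-order exactly at $\Delta^1_4$: any redundant quantifier in translating the $H(\omega_2)$-statement about the coding pattern into a projective formula would push the bound up, so one must show the coding is canonical enough that its $\Sigma_1$-witnesses are essentially unique, allowing the $\Sigma^1_3$-seed from the $M_1$-well-order to be lifted by exactly one projective level.
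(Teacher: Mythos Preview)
Your plan diverges from the paper's architecture in a way that is not merely stylistic but hits a genuine obstruction. You propose a single length-$\delta$ iteration that interleaves the saturation-producing sealing steps with \emph{Jensen-type stationary-set coding} to inscribe both the well-order and the stationarity witnesses. This is essentially the approach of \cite{FH} (for items 1 and 3), and the paper explicitly remarks in the introduction that the localization/stationary-pattern coding used there cannot be combined with the $\Delta_1$-definability machinery of \cite{NS Delta_1}. The reason is structural: if your coding lives in the stationary/nonstationary pattern itself, then the very ideal you are trying to make $\Delta_1$ is being manipulated by the coding, and there is no obvious way to produce a uniform $\Sigma_1(\{\omega_1\})$ witness for ``$S$ is stationary'' that survives this interference. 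Your step (c) gestures at such a witness but gives no mechanism for it.

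What the paper actually does is quite different and you are missing its two central ideas. First, the construction is two-stage: an RCS (or nice) iteration over $M_1$ produces an intermediate model $W_0$ in which $\NS$ is already saturated \emph{and ccc-indestructibly so}, and in which one has planted an $\omega_2$-length independent sequence $\vec{T}$ of Suslin trees together with Caicedo--Velickovic reflecting-sequence codes making $H(\omega_2)^{W_0}$ definable in every further ccc extension. Second, the actual coding of stationary sets and of the well-order is carried out by a subsequent \emph{finite-support ccc} iteration over $W_0$ (via $W_1$) that writes characteristic functions into $\vec{T}$ by branching versus specializing individual trees. Saturation is preserved for free because the second iteration is ccc; the $\Sigma_1(\{\omega_1\})$ witness for stationarity is the existence of a ``suitable'' $\aleph_1$-sized model that correctly computes a block of $\vec{T}$ and sees the right branch/specialization pattern on it --- and suitability itself is made $\Sigma_1(\{\omega_1\})$ by a preliminary almost-disjoint coding step. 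None of this Suslin-tree apparatus, nor the Caicedo--Velickovic layer that makes it recoverable in ccc extensions, appears in your outline; without it, your preservation worry in the ``main obstacle'' paragraph is real and, as far as the paper indicates, unresolved for stationary-pattern coding.
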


This work builds on the model constructed in \cite{NS Delta_1}, where the first two items of the theorem hold. The question of one can have a model with an additional $\Delta^1_4$-definable well-order of its reals remained an open question there.  A model where the first and the third item hold is produced in \cite{FH}, however, due to technical reasons created by the ``localization forcings$"$ used there, the two methods from \cite{NS Delta_1} and \cite{FH} can not be combined.   We shall refine the construction from \cite{NS Delta_1}, using ideas from \cite{Pi Uniformization}, in order to produce the desired definable well-order, along with a $\Sigma_1 (\{ \omega_1 \} )$-predicate of stationarity on $\omega_1$.

In a wider context, this article adds to the growing literature which investigates the definability of $\NS$ in various contexts (see \cite{LueckeSchindlerSchlicht}, \cite{FL}, \cite{HLSW1}, \cite{HLSW2} or \cite{Lu}),
which started with A. H. Mekler and S. Shelah and T. Hyttinen and M. Rautila (see \cite{MekShe} and \cite{HytRau}). The many techniques and connections with other parts of set theory testify that the investigation of the definability of $\NS$ is a very interesting one.

\section{Towards a suitable ground model}

Attempting to use traditional coding arguments—which are typically designed for G\"odel's constructible universe $L$—within the framework of canonical inner models possessing a finite number of Woodin cardinals ($M_n$) presents considerable technical challenges. These difficulties are particularly acute when coding forcings are iterated in a manner that causes the largest Woodin cardinal in $M_n$ to become a successor cardinal in the resulting generic extension.

To overcome these issues, one strategy involves generically modifying the ground models $M_n$. This creates new universes over which coding arguments can be effectively applied. This is the precise approach taken in the work \cite{NS Delta_1}. Since we will use the exact same generic extension of $M_1$ (denoted as $W_0$) as our foundation for a coding argument, we will now detail its construction.

The universe $W_0$ is constructed by forcing over $M_1$ using three distinct types of forcings, which will be introduced shortly. Two of these are coding forcings, employed to endow $W_0$ with specific structures. These structures are essential for a subsequent, second iteration of coding forcings that will be applied over $W_0$. This second machinery is the forcing designed to produce both a $\Delta^1_4$-definable well-order and a $\Delta_1 ( \{ \omega_1 \})$-definition of stationarity.

The third type of forcing aims to make the nonstationary ideal saturated. This is achieved through iterations of sealing antichains forcings, as in the Martin's Maximum paper by Foreman, Magidor and Shelah (see \cite{FMS}).

Let's now focus on defining the coding forcings used in the formation of $W_0$.
We utilize the coding technique developed by A. Caicedo and B. Velickovic (see \cite{CV}). Sets coded using their method serve as a suitable ground-model $H(\omega_2)$. Over this $H(\omega_2)$, we then apply further coding using ccc  forcings. In essence, as mentioned, we are coding on top of already coded sets. A convenient property of the Caicedo-Velickovic coding is that ccc forcings cannot introduce new codes. This has the beneficial consequence that the ground-model $H(\omega_2)$ can always be defined in ccc generic extensions. Specifically, this allows us to identify objects from the ground-model $H(\omega_2)$ (in our case, Suslin trees) and modify them generically—for instance, by adding cofinal branches or specializing functions using ccc forcings. This ultimately yields a new coding mechanism whose codes can be easily decoded.

Although many proofs will be omitted in the following discussion (these can be found in \cite{NS Delta_1}), it is necessary to describe the Caicedo-Velickovic coding in detail, as its features are crucial for understanding this article.

\subsection{Coding Reals by Triples of Ordinals}

We will now present the coding method conceived by A. Caicedo and B. Velickovic (see \cite{CV}) that will be employed in our argument.

\begin{definition}
A $\vec{C}$-sequence, also known as a ladder system, is a sequence $(C_{\alpha} \, : \, \alpha \in \omega_1, \alpha \text{ is a limit ordinal})$, such that for every $\alpha$, $C_{\alpha}$ is a cofinal subset of $\alpha$ and has an ordertype of $\omega$.
\end{definition}

Given three subsets of natural numbers, $x, y, z \subset \omega$, we can define an oscillation function. First, the set $x$ is transformed into an equivalence relation $\sim_x$ defined on $\omega - x$. For natural numbers $n, m$ in the complement of $x$ where $n \le m$, we define $n \sim_x m$ if and only if the interval $[n,m]$ has no elements in common with $x$ (i.e., $[n,m] \cap x = \emptyset$).
This allows us to define the following:

\begin{definition}
For a triple of subsets of natural numbers $(x,y,z)$, let $(I_n \, :\, n \in k \le \omega)$ be the sequence of equivalence classes of $\sim_x$ that have a non-empty intersection with both $y$ and $z$. The oscillation map $o(x,y,z): k \rightarrow 2$ is then defined as the function satisfying:

$$o(x,y,z)(n) = \begin{cases} 0 & \text{if } \min(I_n \cap y) \le \min(I_n \cap z) \\ 1 & \text{otherwise} \end{cases}$$
\end{definition}

Next, we will define how appropriate countable subsets of ordinals can be used to code real numbers. For the remainder of this section, we fix a ladder system $\vec{C}$.
Suppose $\omega_1 < \beta < \gamma < \delta$ are fixed limit ordinals, each with uncountable cofinality. Let $N \subset M$ be countable subsets of $\delta$.
Furthermore, assume that $\{ \omega_1, \beta, \gamma\} \subset N$, and for each $\eta \in \{ \omega_1, \beta, \gamma\}$, $M \cap \eta$ is a limit ordinal and $N \cap \eta < M \cap \eta$.

We can use the pair $(N,M)$ to code a finite binary string. Let $\bar{M}$ be the transitive collapse of $M$, and let $\pi : M \rightarrow \bar{M}$ be the collapsing map. Define $\alpha_M := \pi(\omega_1)$, $\beta_M := \pi(\beta)$, $\gamma_M := \pi(\gamma)$, and $\delta_M := \bar{M}$. These are all countable limit ordinals.
Also, set $\alpha_N := \sup(\pi``(\omega_1 \cap N))$. The height $n(N,M)$ of $\alpha_N$ in $\alpha_M$ is the natural number defined by:

$$n(N,M) := \text{card}(\alpha_N \cap C_{\alpha_M})$$
where $C_{\alpha_M}$ is an element of our pre-fixed ladder system. Since $n(N,M)$ will appear frequently, we will abbreviate it as $n$. Note that because the ordertype of each $C_{\alpha}$ is $\omega$, and $N \cap \omega_1$ is bounded below $M \cap \omega_1$, $n(N,M)$ is indeed a natural number.

Now, we can assign a triple $(x,y,z)$ of finite subsets of natural numbers to the pair $(N,M)$ as follows:
$$x := \{ \text{card}(\pi(\xi) \cap C_{\beta_M}) \, : \, \xi \in \beta \cap N \}$$
Note that $x$ is finite because $\beta \cap N$ is bounded in the set $C_{\beta_M}$ (which is cofinal in $\beta_M$ and has ordertype $\omega$). Similarly, we define:
$$y := \{ \text{card}(\pi(\xi) \cap C_{\gamma_M}) \, : \, \xi \in \gamma \cap N \}$$
and
$$z := \{ \text{card}(\pi(\xi) \cap C_{\delta_M}) \, : \, \xi \in \delta \cap N \}$$
Again, it is straightforward to see that these are finite subsets of natural numbers.

We can then consider the oscillation $o(x \setminus n, y \setminus n, z \setminus n)$ (recalling that $n = n(N,M)$). If the domain of the oscillation function at these points is greater than or equal to $n$, we write:
$$s_{\beta, \gamma, \delta} (N,M) := \begin{cases} o(x \setminus n, y \setminus n, z \setminus n)\upharpoonright n & \text{if defined} \\ \ast & \text{otherwise} \end{cases}$$
Here, $\ast$ simply represents an error symbol. Similarly, if $l > n$, we let $s_{\beta, \gamma, \delta} (N,M) \upharpoonright l = \ast$.

Finally, we can define what it means for a triple of ordinals $(\beta, \gamma, \delta)$ to code a real number $r$.

\begin{definition}
For a triple of limit ordinals $\omega_1 < \beta < \gamma < \delta$, each with uncountable cofinality, we say that it codes a real $r \in 2^{\omega}$ if there exists a continuous, increasing sequence $(N_{\xi} \, : \, \xi < \omega_1)$ of countable sets of ordinals whose union is $\delta$. This sequence must also satisfy the condition that there is a club $C \subset \omega_1$ such that whenever $\xi \in C$ is a limit ordinal, there exists a $\nu < \xi$ for which:
$$r = \bigcup_{\nu < \eta < \xi} s_{\beta, \gamma, \delta} (N_{\eta}, N_{\xi})$$
We call the sequence $(N_{\xi} \, : \, \xi < \omega_1)$ a reflecting sequence.
\end{definition}

Witnesses for this coding can be added using a proper forcing. Conversely, for fixed triples of ordinals, there is a degree of control over the behavior of continuous, increasing sequences on them:

\begin{theorem}[Caicedo-Velickovic]
\begin{itemize}
\item[$(\dagger)$] Given ordinals $\omega_1 < \beta < \gamma < \delta < \omega_2$, each with cofinality $\omega_1$, there exists a proper notion of forcing $\forceP_{\beta \gamma \delta}$ such that after forcing with it, the following holds: There is an increasing, continuous sequence $(N_{\xi} \, : \, \xi < \omega_1)$ where $N_{\xi} \in [\delta]^{\omega}$ and their union is $\delta$. This sequence is such that for every limit ordinal $\xi < \omega_1$ and every $n \in \omega$, there exist $\nu < \xi$ and $s_{\xi}^n \in 2^n$ satisfying $s_{\beta \gamma \delta}(N_{\eta}, N_{\xi}) \upharpoonright n = s_{\xi}^{n}$ for every $\eta$ in the interval $(\nu, \xi)$. In this case, we say the triple $(\beta, \gamma, \delta)$ is stabilized.

\item[$(\ddagger)$] Furthermore, if we fix a real $r$, there is a proper notion of forcing $\forceP_r$. This forcing will produce, for a triple of ordinals $(\beta_r, \gamma_r, \delta_r)$ of size and cofinality $\omega_1$, a reflecting sequence $(P_{\xi} \, : \, \xi < \omega_1)$, where $P_{\xi} \in [\delta_r]^{\omega}$, $\bigcup P_{\xi} = \delta_r$. Additionally, there is a club $C \subset \omega_1$ such that for every limit ordinal $\xi \in C$, there is a $\nu < \xi$ for which:
    $$\bigcup_{\nu < \eta < \xi} s_{\beta_r \gamma_r \delta_r} (P_{\eta}, P_{\xi}) = r.$$

\end{itemize}
  
    \end{theorem}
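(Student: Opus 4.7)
I would construct $\forceP_{\beta\gamma\delta}$ as the poset of countable continuous increasing sequences $p = (N^p_\xi : \xi \leq \alpha^p)$, $\alpha^p < \omega_1$, of countable subsets of $\delta$, each containing $\{\omega_1,\beta,\gamma\}$ and satisfying the side conditions which make $s_{\beta\gamma\delta}(N^p_\eta, N^p_\xi)$ well-defined for $\eta < \xi \leq \alpha^p$. The ordering is end-extension. For $(\ddagger)$, given a real $r$, the forcing $\forceP_r$ is the analogous poset built on a fixed triple $\omega_1 < \beta_r < \gamma_r < \delta_r < \omega_2$ of ordinals of cofinality $\omega_1$.

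Properness I would prove by the standard countable elementary submodel argument. Fix a countable $M \prec H_\theta$ containing the forcing and a condition $p$, and let $N^* := M \cap \delta$; since $\delta$ has cofinality $\omega_1$, $N^* \in [\delta]^\omega$ and contains $\{\omega_1,\beta,\gamma\}$. Inside $M$, I build a descending $\omega$-chain $p = p_0 \geq p_1 \geq \cdots$ meeting every dense subset of $\forceP_{\beta\gamma\delta}$ in $M$, arranging $\sup_k \alpha^{p_k} = M \cap \omega_1$ and $\bigcup_k N^{p_k}_{\alpha^{p_k}} = N^*$. Setting $N^q_\xi := N^{p_k}_\xi$ for $\xi \leq \alpha^{p_k}$ and $N^q_{M \cap \omega_1} := N^*$ then yields an $(M,\forceP_{\beta\gamma\delta})$-generic condition; that $q$ is indeed a condition follows from the continuity of the construction and the fact that $N^*$ meets the side conditions.

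The stabilization required by the generic sequence is then enforced by density: for each $n \in \omega$ and each limit $\xi < \omega_1$, the set of conditions $p$ with $\alpha^p \geq \xi$ admitting some $\nu < \xi$ and $s \in 2^n$ with $s_{\beta\gamma\delta}(N^p_\eta, N^p_\xi) \upharpoonright n = s$ for all $\eta \in (\nu,\xi)$ is dense in $\forceP_{\beta\gamma\delta}$. For $(\ddagger)$ one asks the stronger density claim that $s$ can always be chosen equal to $r \upharpoonright n$; the resulting reflecting sequence then computes $r$ on a club, as desired.

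The main obstacle is precisely this density claim: one must show that, by judiciously choosing which ordinals from $\beta$, $\gamma$ and $\delta$ are placed into subsequent sets $N^{p'}_\eta$, any prescribed finite bit pattern of $s_{\beta\gamma\delta}$ can be realised at a given level, and in the coding case can be made to equal $r \upharpoonright n$. This is the combinatorial core of the Caicedo--Velickovic construction; it relies on the three ladders $C_{\beta_M}, C_{\gamma_M}, C_{\delta_M}$ attached to the collapse of the top set yielding three independent $\omega$-cofinal traces, so that the triples of finite subsets of $\omega$ extracted from $(N,M)$ are flexible enough for the oscillation $o(x,y,z)$ to hit any finite binary string, while truncation at $n = n(N,M)$ discards the unstable initial segment.
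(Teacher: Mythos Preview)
The paper does not prove this theorem: it is stated with attribution to Caicedo--Velickovic, and the only remark following it is that $\forceP_{\beta\gamma\delta}$ and $\forceP_r$ are instances of Moore's $\MRP$ forcings, with a reference to \cite{Moore} and \cite{CV}. So there is no argument in the paper to compare your proposal against; what one can do is assess your sketch on its own.

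The overall shape (closed continuous chains ordered by end-extension, properness via a master condition over a countable $M\prec H_\theta$) is right, but there is a genuine structural mistake in your third paragraph. In an end-extension poset, once a condition has length $\ge\xi$ the sequence below $\xi$ is frozen; hence the set of conditions $p$ with $\alpha^p\ge\xi$ satisfying the stabilization clause at $\xi$ is either all such conditions or is not dense at all. Stabilization (respectively, coding $r$) therefore cannot be ``enforced by density'' after the fact; it must be part of the definition of a condition, exactly as the $\Sigma$-reflection clause is in the $\MRP$ framework. All of the work then shifts into the properness argument: having built the descending chain $(p_k)$ inside $M$, you must show that the new top level $M\cap\omega_1$ itself satisfies the side condition, i.e.\ that a tail of the $N_\eta$ below it realises the required oscillation pattern against $N^\ast=M\cap\delta$.

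That last point is precisely the combinatorial core you allude to, and you have not carried it out. What is needed is that, for each finite target string (or for $r\upharpoonright n$ in the coding case), the set of possible next models $N$ with $s_{\beta\gamma\delta}(N,N^\ast)\upharpoonright n$ equal to the target is club (indeed open and stationary in the appropriate sense) in $[M\cap\delta]^\omega$; this is what lets you interleave meeting dense sets in $M$ with steering the oscillation, and it is the substance of the Caicedo--Velickovic argument. Your final paragraph gestures at why this should be true but does not prove it. Also note that in the actual construction the $N_\xi$ are (traces of) countable elementary submodels, not arbitrary countable subsets of $\delta$; elementarity is what makes the interleaving step go through.
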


Both partial orders $\forceP_{\beta \gamma \delta}$ and $\forceP_{r}$, which force $(\dagger)$ and $(\ddagger)$ respectively, are specific instances of a general class of forcing notions initially investigated by J. Moore in his work on the Set Mapping Reflection Principle ($\MRP$) (see \cite{Moore}). It is crucial for our purposes that these forcings and their iterations preserve Suslin trees.

\begin{theorem}
\begin{enumerate}
\item Assume that $\omega_1 < \beta < \gamma < \delta < \omega_2$ and each ordinal has uncountable cofinality. Then the forcing $\forceP_{\beta \gamma \delta}$ preserves Suslin trees from the ground model.
\item Let $r$ be a real. Then the forcing $\forceP_r$ preserves Suslin trees from the ground model.
\end{enumerate}

\end{theorem}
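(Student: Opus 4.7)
Since $\forceP_{\beta \gamma \delta}$ and $\forceP_r$ both belong to J. Moore's class of set mapping reflection forcings (see \cite{Moore}), their properness is standard, and the plan for the Suslin tree preservation part is to verify the well-known sharper criterion: for proper $\forceP$, a ground-model Suslin tree $T$ remains Suslin in $V^{\forceP}$ if and only if $\forceP \times T$ is proper. Writing $\forceP$ for either of the two forcings, both items of the theorem reduce to establishing that $\forceP \times T$ is proper for every ground-model Suslin tree $T$.

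To check properness of $\forceP \times T$, I would fix a large regular $\theta$, a countable elementary submodel $M \prec H(\theta)$ containing $T$, $\forceP$, and the relevant parameters ($\beta, \gamma, \delta, \vec{C}$ in the first case and $r, \vec{C}$ in the second), a condition $(p_0, t_0) \in M \cap (\forceP \times T)$, and set $\alpha = M \cap \omega_1$. Since $T$ is Suslin, some $t^* \in T_\alpha$ extends $t_0$; I would then enumerate its predecessors cofinally below $\alpha$ as $\langle t_n^* : n < \omega \rangle$, and construct a decreasing sequence $p_0 \geq p_1 \geq \ldots$ in $M \cap \forceP$ meeting every dense set of $\forceP$ lying in $M$, arranging at each stage that $p_{n+1}$ be compatible in the product with $t_n^*$. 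The point is that conditions in these Moore-style forcings are finite approximations to a continuous reflecting $\in$-chain $(N_\xi)$ of countable elementary submodels, and extending a condition amounts to appending a new submodel at the top; the subfamily of extensions whose new top submodel contains $t_n^*$ as an element is dense, so the construction goes through. Setting $q$ to be the condition obtained by appending $M \cap H(\lambda)$ to $\bigcup_n p_n$ at the appropriate $\lambda$ yields the desired $(M, \forceP)$-generic condition, and $(q, t^*)$ is then $(M, \forceP \times T)$-generic.

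The main obstacle lies in verifying that the limit condition $q$ really satisfies the stabilization clause $(\dagger)$ for $\forceP_{\beta \gamma \delta}$ or the real-coding clause $(\ddagger)$ for $\forceP_r$ at the new top level $\alpha = M \cap \omega_1$. This amounts to arguing that, inside $M$, the dense sets which force the oscillation $s_{\beta \gamma \delta}(N_\eta, N_\xi)$ to approximate the prescribed stable finite binary string (respectively, the prescribed initial segment of $r$) can be met simultaneously with the tree-compatibility dense sets isolated above. The open $\Sigma$-sets which drive Moore-style forcings are stationary in $[H(\lambda)]^\omega$ in a way that is robust under multiplication with a Suslin tree, since the ccc-ness of $T$ preserves stationarity in $[H(\lambda)]^\omega$, so the two families of dense sets are compatible. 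Once this core step is handled, both statements of the theorem follow.
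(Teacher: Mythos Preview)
The paper does not prove this theorem here; it simply refers the reader to \cite{NS Delta_1}. Your outline is essentially the standard argument one finds there (reduce Suslin preservation to properness of $\forceP\times T$, fix a countable $M\prec H(\theta)$, use that any $t^\ast\in T_{M\cap\omega_1}$ is already $(M,T)$-generic, build a descending sequence of conditions inside $M$, and cap off with $M\cap H(\lambda)$), so in spirit you are on the right track.

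Two technical points deserve tightening. First, conditions in $\forceP_{\beta\gamma\delta}$ and $\forceP_r$ are not ``finite approximations'' to a reflecting chain; they \emph{are} countable continuous $\in$-chains of models with a closure requirement at limits, and extending means end-extending the whole chain, not just putting one new model on top. Second, and more important, meeting the dense subsets of $\forceP$ that lie in $M$ while being ``compatible in the product with $t_n^\ast$'' is vacuous as stated, since any $p\in\forceP$ is compatible with any $t\in T$ in $\forceP\times T$. What you actually need is to meet all dense subsets of $\forceP\times T$ lying in $M$ while keeping the $T$-coordinate on the branch below $t^\ast$. The way this goes: given dense $D\subseteq\forceP\times T$ in $M$ and $(p,s)\in M$ with $s<_T t^\ast$, the set of $T$-minimal $s'\ge_T s$ for which some $p'\le p$ has $(p',s')\in D$ is a maximal antichain of $T$ above $s$, hence countable (Suslinity), hence contained in $M$, hence met by the branch to $t^\ast$; elementarity then produces the required $(p',s')\in D\cap M$ with $s'<_T t^\ast$. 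Once the sequence is built this way, your identification of the remaining obstacle---verifying the $(\dagger)$ or $(\ddagger)$ clause at the new top level when capping off with $M$---is exactly right, and your heuristic that ccc forcing preserves club-stationarity in $[H(\lambda)]^\omega$ is the correct reason the open-stationary $\Sigma$-sets survive.
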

The proof of this theorem can be found in \cite{NS Delta_1}.

\subsection{Almost Disjoint Coding}

The second coding method we will use is the almost disjoint coding forcing, developed by R. Jensen and R. Solovay \cite{JensenSolovay}. We will identify subsets of $\omega$ with their characteristic functions and use the term "reals" for elements of $2^{\omega}$ and subsets of $\omega$ interchangeably.

Let $F=\{f_{\alpha} \, : \, \alpha < 2^{\aleph_0} \}$ be a family of almost disjoint subsets of $\omega$; that is, a family such that if $r, s \in F$, then their intersection $r \cap s$ is finite. Let $X \subset \kappa$ (where $\kappa \le 2^{\aleph_0}$) be a set of ordinals. There exists a ccc forcing, known as the almost disjoint coding $\mathbb{A}_F(X)$, which adds a new real $x$. This real $x$ codes the set $X$ relative to the family $F$ in the following manner:
$$\alpha \in X \text{ if and only if } x \cap f_{\alpha} \text{ is finite.}$$

\begin{definition}
The almost disjoint coding $\mathbb{A}_F(X)$ relative to an almost disjoint family $F$ consists of conditions $(r, R)$, where $r \in \omega^{<\omega}$ (a finite sequence of natural numbers) and $R \in F^{<\omega}$ (a finite sequence of elements from $F$). A condition $(s,S)$ is stronger than $(r,R)$ (denoted $(s,S) < (r,R)$) if and only if:
\begin{enumerate}
\item $r$ is an initial segment of $s$ ($r \subset s$), and $R$ is an initial segment of $S$ ($R \subset S$).
\item  If $\alpha \in X$, then $r \cap f_{\alpha} = s \cap f_{\alpha}$.
\end{enumerate}

\end{definition}

Another variant, possibly due to L. Harrington (see \cite{Harrington}), codes sets of reals relative to a new real. For the following, we fix some definable bijection between finite sequences of integers and $\omega$. For $b \in \omega^{\omega}$, let $\bar{b}(n)$ denote the natural number that codes the finite sequence $b \upharpoonright n$ (the initial segment of $b$ of length $n$). A subset $b \subset \omega$ (viewed as a real) gives rise to a new set $S(b) \subset \omega$ if we consider the set of codes of its initial segments: $\{ \bar{b}(n) \, : \, n \in \omega \}$.

\begin{definition}
Suppose $A \subset [\omega]^{\omega}$ (i.e., $A$ is a set of infinite subsets of $\omega$). The almost disjoint coding forcing for $A$, denoted $\mathbb{A}(A)$, is defined as follows. Conditions are pairs $(s(0), s(1))$ such that $s(0)$ is a finite set of natural numbers and $s(1)$ is a finite subset of the fixed set of reals $A$. For two conditions $p, q \in \mathbb{A}(A)$, we say $q < p$ ( $q$ is stronger than $p$) if and only if:
\begin{itemize}
\item $p(0) \subset q(0)$ and $p(1) \subset q(1)$.
\item $\forall a \in p(1) \, (S(a) \cap q(0) \subset p(0))$.
\end{itemize}
 
\end{definition}

Note that $\mathbb{A}(A)$ has the Knaster property, which implies that products of $\mathbb{A}(A)$ satisfy the ccc. Given a set of reals $A$ in the ground model $V$, forcing with $\mathbb{A}(A)$ has the following effect: The first coordinates of conditions in the generic filter $G$ will combine to form a real $a$. This real $a$ codes the set $A$ with the help of a predicate for $\omega^{\omega} \cap V$. In the generic extension $V[G]$, membership in $A$ is characterized as:
$$x \in A \leftrightarrow x \in V \land S(x) \cap a \text{ is finite.}$$

This characterization will play a crucial role later. As previously mentioned, our goal is to work towards a universe whose $H(\omega_2)$ is definable in arbitrary generic extensions obtained using ccc forcings. Consequently, we will have definable access to $H(\omega_2)^V$ (the $H(\omega_2)$ of the ground model $V$). We then use the just-defined forcing to encode information into a single real, ensuring that this information can be correctly decoded in all subsequent ccc extensions of the universe.

\subsection{$\NS$ saturated}

As our proof depends on Shelah's argument to force $\NS$ saturated from a Woodin cardinal we introduce very briefly
some of the main ideas. 
The crucial forcing notion which can be used to bound the length 
of antichains in $P(\omega_1) \slash \NS$ is the sealing forcing.

\begin{definition}
 Let $\vec{S}=(S_i \, : \, i < \kappa)$ be a maximal antichain in
 $P(\omega_1) \slash \NS$. Then the sealing forcing for $\vec{S}$, $\mathbb{S} (\vec{S})$
 is defined as follows. Conditions
 are pairs $(p,c)$ such that $p: \alpha +1 \rightarrow \vec{S}$ and 
 $c: \alpha+1 \rightarrow \omega_1$, where the image of $c$ should be closed, and
 $\alpha < \omega_1$. We additionally demand that 
 $\forall \xi < \omega_1 \, c(\xi) \in \bigcup_{i \in \xi} p(i)$, and conditions are ordered by
 end-extension.
\end{definition}
Thus, given a maximal antichain $\vec{S}$, $\mathbb{S} (\vec{S} )$ will collapse its length down to $\omega_1$ while simultaneously shoot a club through the diagonal union of $\vec{S}$. The latter has the desired effect that $\vec{S}$ remains a maximal antichain in all stationary set preserving outer models, which is wrong if we would just collapse the length of $\vec{S}$ down to $\omega_1$. It is wellknown that $\mathbb{S} (\vec{S} )$ is $\omega$-distributive and 
stationary sets preserving if and only if $\vec{S}$ is maximal.

\begin{theorem}(Shelah)
 Let $V$ be a universe with a Woodin cardinal $\delta$. Then there is a $\delta$-sized 
 forcing notion $\forceP$, which is an RCS-iteration of length $\delta$  of sealing forcings, such that in $V^{\forceP}$, $\NS$ is saturated
 and $\omega_2 = \delta$.
\end{theorem}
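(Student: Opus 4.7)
The plan is to build $\forceP$ as a revised countable support (RCS) iteration $\langle \forceP_\alpha, \dot{\forceQ}_\alpha : \alpha < \delta \rangle$ of length $\delta$, where at each stage $\alpha < \delta$ the iterand $\dot{\forceQ}_\alpha$ is a $\forceP_\alpha$-name for a sealing forcing $\seal$ applied to some maximal antichain $\vec{S}$ in $P(\omega_1)/\NS$ of $V^{\forceP_\alpha}$. The selection of which antichain to seal at each stage is driven by a bookkeeping device. Since $\delta$ is Woodin, there is in particular a Laver-style function $\ell : \delta \to V_\delta$ with the anticipation property needed for naming future antichains; I would use $\ell$ together with a diamond-like enumeration of $\forceP_\alpha$-names to ensure that for every $\forceP$-name $\dot{\vec{A}}$ for a maximal antichain in the final model, there is a stage $\alpha < \delta$ at which $\dot{\vec{A}}$ is (forced to be) handled.

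The first main verification is that $\forceP$ preserves $\omega_1$. Each iterand $\seal$ is $\omega$-distributive and, provided the antichain being sealed is genuinely maximal in the intermediate model, stationary-set preserving, hence semi-proper. I would invoke Shelah's RCS iteration theorem for semi-proper forcings to conclude that the whole iteration $\forceP$ is semi-proper, and in particular preserves $\omega_1$. Here one must check inductively at each stage that the antichain chosen really is maximal in $V^{\forceP_\alpha}$ — if at some stage the bookkeeping produces a name whose evaluation is not maximal, we replace it by the trivial forcing at that stage.

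The second main verification is that $\forceP$ has the $\delta$-cc and that $\omega_2^{V^{\forceP}} = \delta$. Both rest on the Woodinness of $\delta$: using the tower of normal ultrafilters (or, equivalently, the existence of suitably strong elementary embeddings $j : V \to M$ with critical point $< \delta$ and $j(\text{crit})$ above enough), we can show that any maximal antichain in $\forceP$ of size $\delta$ would reflect to some $\alpha < \delta$, contradicting the fact that $\forceP_\alpha \ast \dot{\forceQ}_\alpha$ already sealed the relevant initial segment. The same reflection argument guarantees that every maximal antichain $\vec{A}$ in $P(\omega_1)/\NS$ of the final model $V^{\forceP}$ is, by a density-and-reflection argument, already represented at some stage $\alpha_0 < \delta$: some tail of $\vec A$ appears as a maximal antichain in $V^{\forceP_{\alpha_0}}$, and the sealing forcing at some later stage $\alpha \geq \alpha_0$ handling it collapses its length to $\omega_1$ while shooting a club through its diagonal union. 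Since all later iterands are stationary-set preserving, this collapse is inherited by $V^{\forceP}$, giving $|\vec A| \leq \omega_1$ and hence saturation.

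The hard part is, as is classical, the interaction between semi-properness preservation along the RCS iteration and the $\delta$-cc/reflection argument: one must ensure that the bookkeeping always feeds in a genuinely maximal antichain, that semi-properness survives through limits (this is where RCS support rather than countable support is essential, as sealing forcings are $\omega$-distributive but not proper), and that Woodinness of $\delta$ is strong enough to produce the generic elementary embeddings reflecting final-model antichains back into some $V^{\forceP_\alpha}$ — these three pieces together are precisely the content of Shelah's original argument, which I would cite rather than reproduce in full.
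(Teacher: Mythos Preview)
The paper does not prove this theorem at all; it is stated as Shelah's result and left without proof (the paper only uses it as a black box, later citing \cite{SchindlerNS} and \cite{Shelah} implicitly via the discussion of $W_0$). So in that sense your final sentence --- cite Shelah rather than reproduce the argument --- is exactly what the paper does.

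That said, your sketch contains a genuine error worth flagging. You write that each sealing forcing $\seal$ is ``$\omega$-distributive and \ldots\ stationary-set preserving, hence semi-proper.'' This implication is false: stationary-set preserving (even together with $\omega$-distributivity) does \emph{not} imply semi-proper. Indeed, the entire difficulty of Shelah's argument lies precisely here. If every sealing forcing were automatically semi-proper, the theorem would follow from a measurable (or less), not a Woodin cardinal. What actually happens is that the Woodinness of $\delta$ is used to show that, at unboundedly many stages $\alpha < \delta$, the sealing forcing \emph{becomes} semi-proper in $V^{\forceP_\alpha}$ (or, in some presentations, that a suitable composition with a collapse is semi-proper). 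This is the step where the strong-compactness-like embeddings furnished by Woodinness enter, and it is the heart of the proof --- not the $\delta$-cc or the bookkeeping, both of which are comparatively routine. Your outline treats semi-properness as given and then spends the Woodin cardinal on the $\delta$-cc and the reflection of antichains, which inverts where the real work is.
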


Again it is crucial for our needs that the sealing forcings $\mathbb{S} (\vec{S} )$ do preserve Suslin trees.
\begin{theorem}
Let $\vec{S}$ be a maximal antichain in $P(\omega_1) \slash \NS$ then the sealing forcing $\mathbb{S} (\vec{S} )$ preserves ground model Suslin trees.
\end{theorem}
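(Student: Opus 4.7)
My plan is to prove, via the standard master-condition elementary-submodel technique, that for every ground-model Suslin tree $T$ the product $\mathbb{S}(\vec{S}) \times T$ preserves $\omega_1$; since $\mathbb{S}(\vec{S})$ adds no reals, this is equivalent to $T$ remaining ccc, and hence Suslin, in $V^{\mathbb{S}(\vec{S})}$. The approach mirrors the strategy implicit in the preservation theorem for $\forceP_{\beta\gamma\delta}$ and $\forceP_r$ stated above. Concretely, fix a sufficiently large regular $\theta$ and a countable $M \prec H(\theta)$ containing $\vec{S}, \mathbb{S}(\vec{S}), T$; set $\delta := M \cap \omega_1$. The task then reduces to the following: given $(p, c) \in \mathbb{S}(\vec{S}) \cap M$ and $t \in T_\delta$, construct $(p', c') \leq (p, c)$ which is $(M, \mathbb{S}(\vec{S}))$-semi-generic and forces $t$ to be $(M[\dot{G}], T)$-generic.

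The construction proceeds as a diagonal interleaving along $\omega$. Enumerate $\{D_n : n < \omega\}$, the dense subsets of $\mathbb{S}(\vec{S})$ in $M$, and $\{\dot{A}_n : n < \omega\}$, names in $M$ for maximal antichains of $T$. Recursively build a descending chain $(p_n, c_n) \in D_n \cap M$ beginning from $(p, c)$, arranging also that $(p_n, c_n)$ decides some $s_n \in \dot{A}_n$ with $s_n <_T t$; this is possible because each $t \upharpoonright \alpha$ for $\alpha < \delta$ lies in $M$ (the levels $T_\alpha$ are countable and hence subsets of $M$ by elementarity), so the maximality of $\dot{A}_n$ yields, within $M$, a strengthening forcing some below-$t$ node into $\dot{A}_n$. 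Further arrange that $\sup_n \mathrm{dom}(p_n) = \delta$ and that the largest values of $c_n$ are cofinal in $\delta$. At the limit, set $p' \upharpoonright \delta := \bigcup_n p_n$ and $c' \upharpoonright \delta := \bigcup_n c_n$, then extend to domain $\delta + 1$ by $p'(\delta) := S_{j^*}$ for some $j^*$ with $\delta \in S_{j^*}$, and $c'(\delta) := \delta$. The cofinality of the $c_n$-values makes $\mathrm{range}(c')$ closed.

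The main obstacle lies in satisfying the coherence requirement $c'(\delta) = \delta \in \bigcup_{i < \delta} p'(i)$ at the limit stage: this demands that $\delta$ lie in some $p_n(i) \in \vec{S}$, equivalently in some $S_j \in \vec{S} \cap M$. Since $\vec{S}$ partitions $\omega_1$ into stationary pieces modulo $\NS$, the ordinal $\delta = M \cap \omega_1$ belongs to a unique $S_{j^*}$ for club-many $\delta$, but nothing a priori guarantees $S_{j^*} \in M$. This is precisely the technical obstruction at the heart of Shelah's proof that $\mathbb{S}(\vec{S})$ is semi-proper, and it is resolved by restricting attention to ``$\vec{S}$-generic'' countable elementary submodels, namely those $M$ for which the witness $j^*$ lies in $M$. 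The class of such submodels is stationary in $[H(\theta)]^\omega$ by a standard Fodor-style reflection argument, and restricting to it is enough to verify the semi-properness criterion that underpins $\omega_1$-preservation. Once $M$ is chosen in this class and $S_{j^*}$ is included among the $p_n(i)$'s during the recursion, checking that $(p', c')$ is a legitimate condition, is $(M, \mathbb{S}(\vec{S}))$-semi-generic, and forces the $T$-genericity of $t$ is routine.
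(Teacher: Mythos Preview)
The paper does not give a proof of this theorem; it defers to \cite{NS Delta_1}. So there is no in-paper argument to compare against directly, and I assess your proposal on its own terms. Your handling of the main obstacle --- that $\delta = M\cap\omega_1$ must lie in some $S_j \in M$ in order to close off the condition at level $\delta$ --- is correct: the set of such ``$\vec S$-generic'' $M$ is indeed stationary (if it missed a club $C$, then for $M \in C$ with a fixed witness $j_0 \in M$ one would get $\delta_M \notin S_{j_0}$ on a club, contradicting stationarity of $S_{j_0}$), and stationarily many good $M$ suffice since one only needs a single such $M$ containing any given $(p,c)$ and $\dot A$.

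There is, however, a genuine gap in your reduction. The asserted equivalence ``$\mathbb{S}(\vec S)\times T$ preserves $\omega_1$ iff $T$ remains ccc in $V^{\mathbb{S}(\vec S)}$'' is false, even granting that $\mathbb{S}(\vec S)$ adds no reals. For a counterexample take $T$ to be a full Suslin tree and $\mathbb P = T$. Then $\mathbb P$ is $\omega$-distributive, and $\mathbb P\times T = T\times T$ preserves $\omega_1$ (off-diagonal conditions are dense, and below any off-diagonal $(s,t)$ the cone is $T_s\times T_t$, which is Suslin by fullness), yet $T$ is not Suslin in $V^{\mathbb P}=V[b]$ because the nodes immediately off the generic branch $b$ form an uncountable antichain. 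Hence producing an $(M,\mathbb{S}(\vec S)\times T)$-generic pair $((p',c'),t)$ for a \emph{single} $t\in T_\delta$ does not suffice. The repair is to enumerate $T_\delta=\{t_k:k<\omega\}$ and interleave so that the master condition $(p',c')$ forces, for \emph{every} $t_k$ and every $\dot A\in M$, some $s<_T t_k$ into $\dot A$; then $(p',c')\Vdash \dot A\subseteq T_{<\delta}$ directly. The step ``within $M$, find a strengthening forcing some below-$t_k$ node into $\dot A$'' genuinely uses the Suslinness of $T$ in $V$: the set $B=\{s\in T:\exists q\le (p_{n-1},c_{n-1})\,(q\Vdash s\in\dot A)\}$ lies in $M$ and is predense in $T$, hence contains a countable maximal antichain bounded below some level $\beta<\delta$, and its unique element below $t_k$ witnesses the required extension inside $M$.
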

A proof of the theorem can again be found in \cite{NS Delta_1}.

\subsection{Independent Suslin trees}
Suslin trees are used to construct the last of the three coding techniques we will use in
the proof. 
Recall that a set theoretic tree $(T, <)$ is a Suslin tree if it is a normal tree of height $\omega_1$
and no uncountable antichain. All the trees which appear in this paper will be normal, thus whenever we talk about trees it is implicitly assumed that these trees are normal.

Recall that for two trees $(T_0, <_{T_0})$ and $(T_1, <_{T_1})$ their tree-product $T_0 \times T_1$ is defined to be the tree which consists of nodes $\{ (t_0,t_1) \, : \, t_0 \in T_0 \land t_1 \in T_1 \land$ height$(t_0) =$ height$(t_1) \}$, ordered by $(t_0,t_1) <_{T_0 \times T_1} (s_0, s_1)$ if and only if $t_0 <_{T_0} s_0$ and $t_1 <_{T_1} s_1$. From now on whenever we talk about a product of trees, it is always the tree product which is meant.
For our purposes it is necessary to iteratively add 
sequences of blocks of Suslin trees $(\bar{T}_{\alpha} \, : \, \alpha < \kappa)$
such that $\bar{T}$ is itself an $\omega$-length sequence of Suslin trees whose finite subproducts are Suslin again.
One can construct such sequences using Jech's forcing which adds a
Suslin tree with countable conditions.

\begin{definition}
 Let $\forceP_J$ be the forcing whose conditions are
 countable, normal trees ordered by end-extension, i.e. $T_1 < T_2$ if and only
 if $\exists \alpha < \text{height}(T_1) \, T_2= \{ t \upharpoonright \alpha \, : \, t \in T_1 \}$
\end{definition}
It is wellknown that $\forceP_J$ is $\sigma$-closed and
adds a Suslin tree. In fact more is true, the generically added tree $T$ has 
the additional property that for any Suslin tree $S$ in the ground model
$S \times T$ will be a Suslin tree in $V[G]$.
\begin{lemma}
 Let $V$ be a universe and let $S \in V$ be a Suslin tree. If $\forceP_J$ is 
 Jech's forcing for adding a Suslin tree and if $T$ is the generic tree
 then $$V[T] \models T \times S \text{ is Suslin.}$$
\end{lemma}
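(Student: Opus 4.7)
Plan: Argue by contradiction; assume some $p \in \forceP_J$ forces $\dot A$ to be an uncountable antichain in $T \times S$. Fix a countable elementary submodel $M \prec H(\theta)$ with $p, \dot A, \forceP_J, S \in M$, set $\delta := M \cap \omega_1$, and in $V$ enumerate the $\delta$-th level of $S$ as $\{s^*_i : i < \omega\}$ with $b^i := \{s \in S : s <_S s^*_i\}$ the associated cofinal branch of $S \upharpoonright \delta$.

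The main construction, carried out in $V$, produces an $M$-generic descending sequence $(p_n)$ in $\forceP_J$ below $p$ with $\operatorname{ht}(p_n) \to \delta$, together with decisions $p_{n+1} \Vdash (t_n, s_n) \in \dot A$ of increasing heights cofinal in $\delta$. Applying pigeonhole to the countably many branches $\{b^i\}$, there is $i^*$ for which $I := \{n : s_n \in b^{i^*}\}$ is infinite; the $s_n$ for $n \in I$ then lie in a common chain of $S$, and the antichain property of $\dot A$ forces the corresponding $t_n$'s pairwise incomparable in the countable tree $T_0 := \bigcup_n p_n$. Each such $t_n$ therefore determines a distinct cofinal branch $B_n$ of $T_0$, and we obtain a condition $T^* \in \forceP_J$ of height $\delta + 1$ extending $T_0$ by adjoining the $\omega$-many limits $t^*_n := \lim B_n$ as level-$\delta$ nodes, together with enough additional cofinal branches (chosen so as not to pass through any of the $t_n$) to keep $T^*$ normal. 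Then $T^* \leq p_n$ for all $n$, and $T^* \Vdash (t_n, s_n) <_{T \times S} (t^*_n, s^*_{i^*})$ for each $n \in I$.

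The contradiction is closed by exhibiting, for some $n \in I$, an extension $T^{**} \leq T^*$ forcing $\dot A$ to contain an element above $(t^*_n, s^*_{i^*})$ at a level strictly greater than $\delta$; combined with $(t_n, s_n) \in \dot A$ below, this yields two comparable members of $\dot A$. The main obstacle is guaranteeing such an element, and I would handle it by strengthening the inductive construction of $(p_n)$ to maintain the invariant that $p_n$ forces the cone $\{(t,s) \in \dot A : t_n \leq_T t\}$ to be uncountable; this propagates at each step, since above any higher level the cone decomposes into countably many sub-cones, one of which must remain uncountable. Because the extras at level $\delta$ are chosen to avoid the $t_n$, this invariant passes to $T^*$ at the unique level-$\delta$ extension $t^*_n$ of $t_n$, and a further decomposition over the countable set $S_\delta$ isolates some $j^*$ for which the joint cone above $(t^*_n, s^*_{j^*})$ is forced uncountable in $\dot A$. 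The delicate combinatorial heart of the proof is to coordinate the pigeonhole index $i^*$ (arising from the $s_n$'s) with the decomposition index $j^*$ so that they agree for some $n \in I$, at which point the required element of $\dot A$ is decided by any sufficiently tall extension of $T^*$.
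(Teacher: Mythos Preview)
Your approach is a genuinely different, hands-on elementary-submodel argument, whereas the paper's proof is a two-line trick: the set
$\{(p,\check q)\in\forceP_J\ast\dot T:\operatorname{ht}(p)=\alpha+1\text{ and }q\text{ is a node of }p\text{ at level }\alpha\}$
is dense and $\sigma$-closed in $\forceP_J\ast\dot T$, and since $\sigma$-closed forcing preserves ground-model Suslin trees, $S$ stays Suslin in $V^{\forceP_J\ast\dot T}$; combined with $T$ being Suslin in $V[T]$ this gives $T\times S$ Suslin. Your construction is, morally, an unrolled version of this master-condition argument, but as written it carries real gaps.

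First, the parenthetical that the extra cofinal branches used to build $T^*$ can be ``chosen so as not to pass through any of the $t_n$'' is false: normality forces every node of $T_0$ above $t_n$ (and there are such nodes off $B_n$, since $T_0$ branches) to extend to level $\delta$, so $t_n$ will have several level-$\delta$ successors in $T^*$, not just $t^*_n$. Thus your uncountable-cone invariant for $t_n$ does not localize to $t^*_n$. Second, even granting that, the ``delicate combinatorial heart''---forcing the pigeonhole index $i^*$ (coming from the $s_n$) to coincide with the decomposition index $j^*$ (coming from the cone-splitting over $S_\delta$)---is asserted but never carried out, and I do not see a mechanism for it. If you want to rescue the direct route, drop the $(t_n,s_n)$/pigeonhole machinery entirely: assume $\dot A$ is maximal, use $M$-genericity of $(p_n)$ to see that the master condition $T^*$ decides $A_0:=\dot A\cap\bigl(T_0\times(S\!\upharpoonright\!\delta)\bigr)$ to be a \emph{maximal} antichain of $T_0\times(S\!\upharpoonright\!\delta)$, and observe that every level-$\delta$ node of $T^*\times S$ then lies above an element of $A_0$; hence $T^*\Vdash\dot A=A_0$ is countable, a contradiction. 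That is exactly the ``$\sigma$-closed preserves Suslin'' argument in disguise, which is why the paper's reduction is the cleaner move.
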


\begin{proof}
Let $\dot{T}$ be the $\forceP_J$-name for the generic Suslin tree. We claim that $\forceP_J \ast \dot{T}$ has a dense subset which is $\sigma$-closed. As $\sigma$-closed forcings will always preserve ground model Suslin trees, this is sufficient. To see why the claim is true consider the following set:
$$\{ (p, \check{q}) \, : \, p \in \forceP_J \land height(p)= \alpha+1  \land  \check{q} \text{ is a node of $p$ of level } \alpha \}.$$
It is easy to check that this set is dense and $\sigma$-closed in $\forceP_J \ast \dot{T}$.

\end{proof}

A similar observation shows that a we can add an $\omega$-sequence of
such Suslin trees with a fully supported iteration. Even longer sequences
of such trees are possible if we lengthen the iteration but for our needs $\omega$-blocks are sufficient.

\begin{lemma}\label{ManySuslinTrees}
 Let $S$ be a Suslin tree in $V$ and let $\forceP$ be a fully supported
 iteration of length $\omega$ of forcings $\forceP_J$. Then in the generic extension
 $V[G]$ there is an $\omega$-sequence of Suslin trees $\vec{T}=(T_n \, : \, n \in \omega)$ such
that for any finite $e \subset \omega$
the tree $S \times \prod_{i \in e} T_i$ will be a Suslin tree in $V[\vec{T}]$.
\end{lemma}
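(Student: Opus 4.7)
The plan is to adapt the $\sigma$-closed-dense-set trick from the previous lemma to the iterated setting. First I would observe that, as a fully supported iteration of length $\omega$ of the $\sigma$-closed forcing $\forceP_J$, the forcing $\forceP$ is itself $\sigma$-closed, so it preserves $\omega_1$ and adds no reals; write $\vec{T} = (T_n \, : \, n \in \omega)$ for the generic sequence. Fix a finite $e = \{i_1, \ldots, i_k\} \subset \omega$. Since a normal tree of height $\omega_1$ is Suslin if and only if it preserves $\omega_1$ as a forcing, it suffices to show that the two-step iteration $\forceP \ast (\dot{S} \times \prod_{i \in e} \dot{T}_i)$ preserves $\omega_1$, and this will follow from producing a $\sigma$-closed dense subset $D$ of it.

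A condition in $D$ will have the form $(p, \check{s}, \check{q}_{i_1}, \ldots, \check{q}_{i_k})$ where, for a common successor ordinal $\alpha + 1 < \omega_1$: the name $p(i)$ for each $i \in e$ has been reduced, using $\sigma$-closure of the initial segment $\forceP_i$, to a check-name for a concrete countable normal tree of height $\alpha + 1$; the node $s$ lies on level $\alpha$ of $S$; and each $q_{i_j}$ lies on the top level $\alpha$ of $p(i_j)$. Density of $D$ follows by inductively deciding the names $p(i)$ for $i \in e$ up to a common height and then choosing top-level nodes, mirroring the single-tree case of the preceding lemma.

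For $\sigma$-closedness, let $(p_n, \check{s}_n, \check{q}_{i_j, n})_{n \in \omega}$ be a descending sequence in $D$ whose heights $\alpha_n + 1$ converge to a countable limit $\beta$. At each coordinate $i \in e$, the union $\bigcup_n p_n(i)$ is a countable normal tree of height $\beta$; I extend it to height $\beta + 1$ by attaching, at the new top level, one cofinal branch through each node below (there are countably many such nodes, and every one admits a cofinal branch by normality together with countability of $\beta$), making sure the branch determined by $(q_{i_j, n})_n$ appears as the designated top node $q_{i_j}$. At coordinates $i \notin e$, any $\sigma$-closure lower bound for the $p_n(i)$'s works. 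Let $s$ be the unique level-$\beta$ node of $S$ extending the chain $(s_n)_n$, which exists because $S$ is normal of height $\omega_1$ and $\beta < \omega_1$. The resulting tuple lies in $D$ and is a lower bound.

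From $\sigma$-closure of $D$, the two-step iteration preserves $\omega_1$; since $\forceP$ alone preserves $\omega_1$, so does the quotient, i.e.\ the tree product $S \times \prod_{i \in e} T_i$ viewed as a forcing in $V[\vec{T}]$. Being a normal tree of height $\omega_1$, this is equivalent to its being Suslin. The main delicate point I anticipate is the name-reduction step used in the density argument: extracting, for each $i \in e$, a check-name for $p(i)$ of a chosen successor height requires a short induction along $e$ that exploits the $\sigma$-closure of the initial segments $\forceP_i$, while keeping the full-support convention intact and the common height $\alpha$ consistent across the chosen coordinates.
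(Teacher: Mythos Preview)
Your argument has a genuine gap in the $\sigma$-closure step. You write: ``Let $s$ be the unique level-$\beta$ node of $S$ extending the chain $(s_n)_n$, which exists because $S$ is normal of height $\omega_1$ and $\beta < \omega_1$.'' But normality does \emph{not} guarantee that an arbitrary countable chain in $S$ extends to a node on the limit level; it only says that each individual node has extensions to every higher level, and that nodes on limit levels are determined by their sets of predecessors. In fact no Suslin tree can have this extension property: already at level $\omega$, the countable tree $S\upharpoonright\omega$ has $2^{\aleph_0}$ cofinal branches (each node splits), so if every one of them extended you would get $|S_\omega|\ge 2^{\aleph_0}$, contradicting countable levels. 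Put differently, if your dense set $D$ really were $\sigma$-closed, then $\forceP\ast(\check S\times\prod_{i\in e}\dot T_i)$ would be equivalent to a $\sigma$-closed forcing and hence could not add a cofinal branch to the Aronszajn tree $S$; but the $S$-coordinate of the generic \emph{is} such a branch. So $D$ cannot be $\sigma$-closed, and the argument collapses at exactly this point. (Your auxiliary claim that a normal $\omega_1$-tree is Suslin iff it preserves $\omega_1$ is also not something you can take for granted, but that is moot once the dense-set construction fails.)

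The paper's proof sidesteps this by \emph{not} folding $S$ into the dense set. It only shows that $\prod_{n}\forceP_J\ast\prod_n\dot T_n$ (and its finite-$e$ restrictions) has a $\sigma$-closed dense subset, defined coordinate-wise exactly as in the previous lemma; here every coordinate is a generic tree under construction, so one can always \emph{create} the needed limit node. Since $\sigma$-closed forcing preserves ground-model Suslin trees, $S$ stays Suslin after forcing with $\prod_{i\in e}\forceP_J\ast\prod_{i\in e}\dot T_i$; from this one reads off that $S\times\prod_{i\in e}T_i$ is Suslin in $V[(T_i)_{i\in e}]$, and then the remaining $\sigma$-closed tail $\prod_{i\notin e}\forceP_J$ preserves that Suslinness to $V[\vec T]$. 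The moral: the $\sigma$-closed-dense-set trick works for the \emph{generic} trees because you are building them, but the fixed ground-model Suslin tree $S$ must be handled externally via the preservation theorem, not absorbed into the dense set.
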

\begin{proof}
Let $G$ be a generic filter for $\forceP$. First we observe that the fully supported product $\prod_{n \in \omega} \forceP_J$ followed by the fully supported forcing with the product of the generically added trees $T_n$ has a $\sigma$-closed dense subset which is defined coordinate-wise as above, thus ground model Suslin trees are preserved. If we pick a finite $e \subset \omega$, then for an arbitrary Suslin tree $S \in V$, $\prod_{i \in e} T_i \times S$ is a Suslin tree in the intermediate model generated by the trees $T_i$, $i \in e$ over $V$. This is preserved when passing to the generic extension $V[G]$.
\end{proof}

These sequences of Suslin trees will become important later in our proof, thus they will get a name.
\begin{definition}
 Let $\vec{T} = (T_{\alpha} \, : \, \alpha < \kappa)$ be a sequence of Suslin trees. We say that the sequence is an 
 independent family of Suslin trees if for every finite set $e= \{e_0, e_1,...,e_n\} \subset \kappa$ the product $T_{e_0} \times T_{e_1} \times \cdot \cdot \cdot \times T_{e_n}$ 
 is a Suslin tree again.
\end{definition}

\subsection{Some properties of the ground model $M_1$}
The ground model of our forcing construction is the canonical inner model with one Woodin cardinal $M_1$, thus we will introduce some of its properties which are crucial for our needs. We will not assume that the reader is familar with the basic notions of inner model theory, and instead state the important properties of $M_1$ which are proved in \cite{NS Delta_1}. Any reader will be able to follow the proofs of the article as long as she is willing to accept those properties as additional axioms.

 Recall that $M_1$ is a proper class premouse containing a Woodin cardinal (see \cite{Steel2}, pp. 81 for a definition of $M_1)$.
The reals of $M_1$ admit a $\Sigma^{1}_3$-definable wellorder (see \cite{Steel2}, Theorem 4.5), the definition of the wellorder makes crucial use of a weakened notion of iterability, the so-called $\Pi^1_2$-iterability which we shall not introduce and blackbox its main consequences. Again the reader can find the relevant proofs in \cite{NS Delta_1}.

The next lemma is folklore.
\begin{lemma}
Let $\mathcal{M}$ and $\mathcal{N}$ be $\omega$-sound premice which both project to $\omega$. Assume that $\mathcal{M}$ is an initial segment of $M_1$ and $\mathcal{N}$ is $\Pi^1_2$-iterable, and let $\Sigma$ denote the winning strategy for player II in $\mathcal{G}_{\omega} (\mathcal{M}, \omega_1+1)$. Then we can successfully compare $\mathcal{M}$ and $\mathcal{N}$ and consequently $\mathcal{M} \triangleleft \mathcal{N}$ or $\mathcal{N} \trianglelefteq \mathcal{M}$.
\end{lemma}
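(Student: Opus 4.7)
The plan is to run the standard Mitchell--Steel coiteration of $\mathcal{M}$ and $\mathcal{N}$, producing iteration trees $\mathcal{T}$ on $\mathcal{M}$ and $\mathcal{U}$ on $\mathcal{N}$: at successor stages I would form the least extender disagreement between the current top models and apply the corresponding extenders on each side, while at limit stages I would invoke $\Sigma$ on the $\mathcal{M}$-side and the $\Pi^1_2$-iterability strategy on the $\mathcal{N}$-side to pick cofinal wellfounded branches. Because $\mathcal{M}$ and $\mathcal{N}$ are $\omega$-sound premice that project to $\omega$, both are countable, so every iteration tree arising in the comparison is countable --- precisely the class on which $\Pi^1_2$-iterability is designed to supply winning strategies for player II.

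I would then show that the coiteration terminates in countably many steps by the usual argument: if it did not, one could extract cofinally many distinct extenders used in $\mathcal{T}$ and $\mathcal{U}$ and a descending sequence of ordinals among the critical points, which is impossible. At the terminal stage this yields last models $\mathcal{M}^{\mathcal{T}}_\infty$ and $\mathcal{M}^{\mathcal{U}}_\infty$ that line up as initial segments of one another, with at least one of the two main branches not dropping.

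Finally, I would use the fine-structural fact that $\omega$-soundness together with $\rho_\omega = \omega$ forces both main branches to use no extenders at all. Concretely, if the main branch on, say, the $\mathcal{M}$-side did not drop but used some extender, then $\mathcal{M}^{\mathcal{T}}_\infty$ would again be $\omega$-sound with projectum $\omega$ and the iteration map would send the standard parameter of $\mathcal{M}$ to the standard parameter of $\mathcal{M}^{\mathcal{T}}_\infty$. A Dodd--Jensen-style argument, combined with the line-up of the last models on top of the symmetric statement on the $\mathcal{N}$-side, then contradicts nontriviality unless the trees were trivial to begin with. Hence $\mathcal{T}$ and $\mathcal{U}$ are trivial and $\mathcal{M}$, $\mathcal{N}$ themselves line up, giving $\mathcal{M} \triangleleft \mathcal{N}$ or $\mathcal{N} \trianglelefteq \mathcal{M}$. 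The main obstacle to watch carefully is the verification that $\Pi^1_2$-iterability truly delivers branches for every tree on $\mathcal{N}$ that can appear in this coiteration; since all such trees are countable and arise from a coiteration against an initial segment of $M_1$ via $\Sigma$, this should reduce to the black-boxed content of $\Pi^1_2$-iterability imported from \cite{NS Delta_1}.
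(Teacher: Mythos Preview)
The paper does not actually prove this lemma: it is stated as ``folklore'' with the details deferred to \cite{NS Delta_1}, so there is no in-paper argument to compare against. Your outline is the standard folklore proof and is correct in its overall shape: run the coiteration using $\Sigma$ on the $\mathcal{M}$-side and the $\Pi^1_2$-iterability of $\mathcal{N}$ on the other, observe termination at a countable stage, and then use $\omega$-soundness together with $\rho_\omega=\omega$ to conclude that neither main branch moves.

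One small comment on the final step: what you call a ``Dodd--Jensen-style argument'' is really just the soundness dichotomy plus the standard ``no common nontrivial iterate in a comparison'' lemma. If, say, the $\mathcal{M}$-side does not drop but iterates nontrivially, then $\mathcal{M}^{\mathcal{T}}_\infty$ still has $\rho_\omega=\omega$ but is \emph{not} $\omega$-sound; hence it cannot be a proper initial segment of $\mathcal{M}^{\mathcal{U}}_\infty$ (proper initial segments are sound), forcing $\mathcal{M}^{\mathcal{T}}_\infty=\mathcal{M}^{\mathcal{U}}_\infty$, which is then ruled out by the usual least-disagreement/compatible-first-extender contradiction. No genuine appeal to (weak) Dodd--Jensen is needed here. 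Your caveat about $\Pi^1_2$-iterability supplying branches for all trees arising in the coiteration is exactly the point that the paper black-boxes, and it is indeed what is established in \cite{NS Delta_1} (following Steel).
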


It is relatively straightforward to check that the set of reals which code $\Pi^1_2$-iterable, countable premice is itself a $\Pi^1_2$-definable set in the codes (see \cite{Steel2}, Lemma 1.7). Modulo the last lemma, this implies that there is a nice definition of a cofinal set of countable initial segments of $M_1$  in $\omega_1$-preserving
forcing extensions $M_1[G]$ of $M_1$, (in fact this definiton holds in all outer models of $M_1$ with the same $\omega_1$):

\begin{lemma}
Let $M_1[G]$ be an $\omega_1$-preserving forcing extension of $M_1$. Then in $M_1[G]$ there is
$\Pi^{1}_2$-definable set $\mathcal{I}$ of premice  which are of the form $\mathcal{J}^{M_1}_{\eta}$ for 
some $\eta< \omega_1$. $\mathcal{I}$ is defined as
$$\mathcal{I}:= \{ \mouseM \text{ ctbl premouse} \, : \,\mouseM \text{ is } \Pi^{1}_2\text{-iterable}, \, 
\omega\text{-sound} \text{ and projects to } \omega \},$$
and the set
$$\{ \eta < \omega_1 \, : \, \exists \mouseN \in \mathcal{I} (\mouseN = \mathcal{J}^{M_1}_{\eta})\}$$ is cofinal in $\omega_1$.
\end{lemma}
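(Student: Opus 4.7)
The plan is to verify the two assertions of the lemma separately: that the displayed predicate defining $\mathcal{I}$ is genuinely $\Pi^1_2$ when evaluated in $M_1[G]$, and that the corresponding set of heights is cofinal in $\omega_1^{M_1[G]}$. For definability, the clauses ``$\mouseM$ is a countable premouse'', ``$\mouseM$ is $\omega$-sound'' and ``$\mouseM$ projects to $\omega$'' are arithmetical in a real coding $\mouseM$, while $\Pi^1_2$-iterability is, by the Steel reference cited just above and black-boxed in the excerpt, itself a $\Pi^1_2$ condition on such a real. The conjunction is therefore $\Pi^1_2$, and since the defining formula mentions only reals, its complexity is preserved in any outer model containing the same reals, in particular in $M_1[G]$.

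For the cofinality assertion, I would first observe that every $\mouseN \in \mathcal{I}$ is literally isomorphic to an initial segment of $M_1$. Pick any $\omega$-sound initial segment $\mouseM = \mathcal{J}^{M_1}_{\xi}$ of $M_1$ projecting to $\omega$ with $\xi$ large (these exist cofinally in $\omega_1^{M_1}$ by standard fine structure). The preceding comparison lemma yields $\mouseM \triangleleft \mouseN$ or $\mouseN \trianglelefteq \mouseM$; soundness together with projection to $\omega$ of both sides rules out the first alternative once $\xi$ is chosen sufficiently large, so $\mouseN = \mathcal{J}^{M_1}_{\eta}$ for some $\eta < \omega_1^{M_1}$. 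Cofinality then follows once I produce cofinally many $\eta < \omega_1^{M_1}$ such that $\mathcal{J}^{M_1}_{\eta}$ is $\omega$-sound and projects to $\omega$; this is again a standard fine-structural consequence of the soundness of $M_1$, and works inside $M_1$ above any prescribed countable ordinal. Such $\mathcal{J}^{M_1}_{\eta}$ are fully iterable in $M_1$, hence in particular $\Pi^1_2$-iterable there. Upward absoluteness of $\Pi^1_2$ statements then yields $\Pi^1_2$-iterability in $M_1[G]$, so these $\eta$ witness cofinality in $\mathcal{I}$ as computed in $M_1[G]$, where $\omega_1^{M_1[G]} = \omega_1^{M_1}$ by hypothesis.

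The main potential subtlety lies in the upward absoluteness step. A priori, one might formulate ``$\Pi^1_2$-iterable'' in a way that quantifies over iteration strategies living in the ambient universe, which would not obviously transfer from $M_1$ to $M_1[G]$. What rescues the argument is precisely the fact, attributed to Steel and black-boxed in the excerpt, that $\Pi^1_2$-iterability of a countable premouse can be expressed as a genuine $\Pi^1_2$ statement about the real coding it, rather than as a statement about objects in $V$. Once this representation is granted, standard $\Pi^1_2$-absoluteness finishes the argument, and the remaining ingredients—comparison against initial segments of $M_1$, and the existence of cofinally many sound projecting-to-$\omega$ levels of $M_1$—are routine.
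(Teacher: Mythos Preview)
Your proof is correct and follows essentially the same approach as the paper, which only sketches the argument in the sentence preceding the lemma: the $\Pi^1_2$-definability is Steel's result, and the preceding comparison lemma identifies members of $\mathcal{I}$ with initial segments of $M_1$. You supply the details the paper omits (cofinally many sound, projecting-to-$\omega$ levels and the Shoenfield absoluteness step); the one loose phrase is ``upward absoluteness of $\Pi^1_2$ statements''---$\Pi^1_2$ formulas are a priori downward absolute, and what you actually need (and invoke correctly in your final paragraph) is full Shoenfield $\Pi^1_2$-absoluteness.
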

In particular $M_1| \omega_1$ is $\Sigma_1(\omega_1)$-definable in $\omega_1$-preserving generic extensions of $M_1$, as $x \in M_1 | \omega_1$ if and only if there is a transitive $U \models \ZFP$, $\omega_1 \subset U$, $\aleph_1^U=\aleph_1$ such that $U \models \exists \mathcal{M} \in \mathcal{I} \land x \in \mathcal{M}$, which suffices using Shoenfield absoluteness. A similar argument also shows that $\{ M_1 | \omega_1\}$ is $\Sigma_1(\omega_1)$ definable, as we can successfully compute it in transitive $\omega_1$-containing models, via the following $\Sigma_1(\omega_1)$-formula:
 \begin{align*}
(\ast) \quad X=M_1|\omega_1 \Leftrightarrow  \exists U (&U \text{ is a transitive model of } \ZFP \land \omega_1 \subset U
\land \\& U \models \forall \alpha < \omega_1 \exists r \in \mathcal{I} (\alpha \in (r \cap Ord)) \land \\& \, \,\qquad X \text{ is transitive and } X \cap Ord=\omega_1 \land 
\\& \qquad \qquad \forall x \in \mathcal{I} (x \subset X) \land \forall y \in X \exists x \in \mathcal{I} (y \in x))
\end{align*} 
Indeed, if the left hand side of $(\ast)$ is true, then any transitive $U$ which contains $M_1 | \omega_1$ as an element and which models $\ZFP$ will witness the truth of the right hand side, which is an immediate consequence of Shoenfield absoluteness.

If the right hand side is true, then, using the fact that $\Sigma^1_3$-statements are upwards absolute between $U$ and the real world, $U$ will contain an $\omega_1$-height, transitive structure $X$ which contains all countable initial segments of $M_1$, and such that every $y \in X$ is included in some element of $M_1| \omega_1$, in other words $X$ must equal $M_1 | \omega_1$.

We shall now apply the just obtained definability results to show that over $M_1$ there are $\Sigma_1(\omega_1)$-definitions of an $\omega$-sequence of independent Suslin trees and a ladder system on $\omega_1$.
 
The first thing to note is that $M_1 | \omega_1$ can define a $\diamondsuit$-sequence in the same way as $L_{\omega_1}$ can. Indeed, as $M_1$ has a $\Delta_3^1$-definable wellorder of the reals whose definition relativizes to $M_1 | \omega_1$ we can repeat Jensen's original proof in $M_1$ to construct a candidate for the $\diamondsuit$-sequence, via picking at every limit stage $\alpha< \omega_1$ the $<_{M_1}$-least pair $(a_{\alpha}, c_{\alpha}) \in P(\alpha) \times P(\alpha)$ which witnesses that the sequence we have created so far is not a $\diamondsuit$-sequence. The proof that this defines already a witness for $\diamondsuit$ is finished as usual with a condensation argument. Hence we shall show that if $\mathcal{J}^{M_1}_{\beta}$ is least such that $(a_{\alpha} \, : \, \alpha< \omega_1) $ and $(A,C) \in \mathcal{J}^{M_1}_{\beta}$, where $(A,C)$ is the $<_{M_1}$-least witness for $(a_{\alpha})_{\alpha < \omega_1}$ not being a $\diamondsuit$-sequence, then there is an countable $N \prec \mathcal{J}^{M_1}_{\beta}$ such that the transitive collapse $\bar{N}$ is an initial segment of $M_1$. 

To see that in fact every such $N$ collapses to an initial of $M_1$, recall
the condensation result as in \cite{Steel3}, Theorem 5.1, which we can state in our situation as follows:
\begin{theorem}
Let $\mathcal{M}$ be an initial segment of $M_1$. Suppose that $\pi: \bar{N} \rightarrow \mathcal{M}$ is the inverse of the transitive collapse and $crit(\pi)=\rho^{\bar{N}}_{\omega}$, then either
\begin{enumerate}
\item $\bar{N}$ is a proper initial segment of $\mathcal{M}$, or
\item there is an extender $E$ on the $\mathcal{M}$-sequence such that
$lh(E)=\rho^{\bar{N}}_{\omega}$, and $\bar{N}$ is a proper initial segment of $Ult_0(\mathcal{M},E)$.
\end{enumerate}
\end{theorem}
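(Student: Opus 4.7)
The plan is to prove this condensation result via the standard coiteration (or comparison) argument that underlies such theorems in inner model theory. Since $\mathcal{M}$ is a (countable) initial segment of $M_1$, it inherits an $(\omega_1+1)$-iteration strategy $\Sigma$ from $M_1$. Using the embedding $\pi: \bar{N}\to \mathcal{M}$, I would first lift $\Sigma$ to an $(\omega_1+1)$-iteration strategy for $\bar{N}$ by the usual copying construction. This makes $\bar{N}$ and $\mathcal{M}$ coiterable, and their coiteration can be carried out inside $M_1$ (or any sufficiently large $V_\theta$ containing both premice and the lifted strategy).

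The first key step is to show that the $\bar{N}$-side of the coiteration does not move. Suppose for contradiction that some extender $E$ from the $\bar{N}$-sequence is used. Since $\bar{N}$ is $\omega$-sound with projectum $\rho^{\bar{N}}_{\omega}$, no extender on its sequence can be applied in a way that respects both soundness and the rules of comparison with $\mathcal{M}$: the case $crit(E) < \rho^{\bar{N}}_\omega$ conflicts with the agreement between the two sides forced by copying and the embedding $\pi$, while $crit(E) \ge \rho^{\bar{N}}_\omega$ is incompatible with soundness of $\bar{N}$.

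Having ruled out movement on the $\bar{N}$-side, I would analyze the $\mathcal{M}$-side. If no extender is used on the $\mathcal{M}$-side either, then by the universality and soundness of the outcomes of the coiteration, $\bar{N}$ is lined up as a proper initial segment of $\mathcal{M}$, which is case (1). Otherwise there is a first extender $E$ applied on the $\mathcal{M}$-side; a calculation using $crit(\pi)=\rho^{\bar{N}}_\omega$, together with the rules governing when extenders enter a coiteration, forces $lh(E)=\rho^{\bar{N}}_\omega$. Since the $\mathcal{M}$-side cannot move twice without contradicting the soundness of $\bar{N}$, the comparison then identifies $\bar{N}$ as a proper initial segment of $Ult_0(\mathcal{M},E)$, giving case (2).

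The main obstacle is the fine-structural bookkeeping: ensuring that the iterability of $\bar{N}$ transfers cleanly through $\pi$, and arguing that no extender can be used on the $\bar{N}$-side, which crucially relies on solidity and universality of the relevant fine-structural levels. These are the technical heart of Steel's proof in \cite{Steel3}, and in practice one simply invokes that theorem rather than reproducing the full argument; the statement above is merely the specialization of Steel's general condensation lemma to the $M_1$-setting needed here.
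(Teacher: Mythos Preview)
The paper does not prove this theorem at all: it is simply quoted as the condensation result from \cite{Steel3}, Theorem~5.1, specialized to initial segments of $M_1$, and then applied. Your final paragraph already recognizes this, and the sketch you give is a fair outline of the standard coiteration argument behind Steel's general condensation lemma; so your proposal and the paper agree that the correct ``proof'' here is a citation.
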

We shall argue, that in our situation, the second case is ruled out, hence every $N \prec \mathcal{J}^{M_1}_{\beta}$ collapses to an initial segment of $M_1$. Indeed, due to the $\omega$-soundness of $\mathcal{J}^{M_1}_{\beta}$, every $N \prec \mathcal{J}^{M_1}_{\beta}$ will satisfy that\[ \rho_{\omega}^N= \rho_{\omega}^{\mathcal{J}^{M_1}_{\beta}}=\omega_1^{\mathcal{J}^{M_1}_{\beta}},\] hence $crit(\pi)= \omega_1^{\bar{N}}= \rho^{\bar{N}}_{\omega}$ by elementarity of $\pi$.

But $\bar{N} | \omega_1^{\bar{N}}= N | \omega_1^{\bar{N}}$, and as 
$\bar{N} | \omega_1^{\bar{N}}$ thinks that $\omega$ is its largest cardinal, 
$N | \omega_1^{\bar{N}}$ must believe this as well. But then there can not be an extender on the $N$-sequence which is indexed at $\omega_1^{\bar{N}}$, as otherwise $N | \omega_1^{\bar{N}}$ would think that $\omega_1^{\bar{N}}$ is inaccessible, which is a contradiction.
Hence, the condition $lh(E)=\rho_{\omega}^{\bar{N}}$ is impossible and all that is left is case 1, so $\bar{N}$ is an initial segment of $M_1$.

This shows that Jensen's construction of a $\diamondsuit$-sequence succeeds when applied to $M_1$. It is straightforward to verify that the recursive construction can be carried out in $M_1 | \omega_1$ by absoluteness. Consequentially the $\diamondsuit$-sequence is a $\Sigma_1$-definable class over $M_1 | \omega_1$.

We can use the $\diamondsuit$-sequence to construct an independent $\omega$-sequence of Suslin trees
due to a result of Jensen.

\begin{definition}
 Let $T$ be a tree and $a \in T $ be a node, then $T_a$ denotes the tree $\{ x\in T \, : \, x>_Ta \}$.
 A Suslin tree $T$ is called full if for any level $\alpha$ and any finite sequence of nodes
 $a_0,...,a_n$ on the $\alpha$-th level of $T$, the tree $T_{a_0} \times T_{a_1}\times \cdot \cdot \cdot \times T_{a_n}$
 is a Suslin tree again.
\end{definition}
A proof of the next result can be found in \cite{Handbook of topology}, Theorem 6.6.
\begin{theorem}
 $\diamondsuit$ implies the existence of a full Suslin tree. Consequently if $\diamondsuit$ holds
 then there is an $\omega$-length sequence of Suslin trees $\vec{T} =\{ T_0, T_1,... \}$ such that 
 any finite product of members of $\vec{T}$ is a Suslin tree again.
\end{theorem}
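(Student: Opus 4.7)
The plan is to mimic Jensen's original $\diamondsuit$-construction of a Suslin tree while strengthening the bookkeeping at countable limit stages so that maximal antichains in every finite product of cones $T_{a_0} \times \cdots \times T_{a_{k-1}}$ are sealed off cofinally often. Since $\diamondsuit$ can be taken to code arbitrary subsets of $\omega_1$ correctly on a stationary set, I would arrange it to predict, at each countable limit $\alpha$, a pair $(k_{\alpha}, X_{\alpha})$ consisting of a natural number and a subset of $(T \upharpoonright \alpha)^{k_{\alpha}}$, using a fixed bijection between $\omega_1$ and $\omega \times \omega_1$ to encode such pairs as bounded subsets of $\alpha$.

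The tree $T \subseteq 2^{<\omega_1}$ is built by recursion on its levels. Successor levels split each node into two so that normality is preserved automatically. At a limit level $\alpha$, I would fix a cofinal $\omega$-sequence in $\alpha$ and, for every node $s$ of $T \upharpoonright \alpha$, choose a cofinal branch $b_s$ through $T \upharpoonright \alpha$ passing through $s$; the new level $T_{\alpha}$ is declared to be the set of suprema of these branches. The sealing condition to enforce is this: whenever $X_{\alpha}$ codes a maximal antichain of $(T \upharpoonright \alpha)^{k_{\alpha}}$, for every $k_{\alpha}$-tuple $(s_0, \ldots, s_{k_{\alpha}-1})$ of pairwise distinct nodes at a common level of $T \upharpoonright \alpha$, the tuple of chosen branches $(b_{s_0}, \ldots, b_{s_{k_{\alpha}-1}})$ must pass through an element of $X_{\alpha}$ at some level below $\alpha$. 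Because $T \upharpoonright \alpha$ is countable, there are only countably many such tuples, and a standard zig-zag interleaving of extensions---possible by maximality of $X_{\alpha}$ inside the countable product---constructs the required branches.

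The verification of fullness then proceeds by the classical reflection argument: given any maximal antichain $Y$ of $T_{a_0} \times \cdots \times T_{a_{k-1}}$, encode the pair $(k,Y)$ as a single subset $Y^{\ast} \subseteq \omega_1$ via the fixed bijection. The $\diamondsuit$-sequence catches $Y^{\ast}$ on a stationary set of $\alpha$, and whenever such $\alpha$ exceeds the common level of $a_0, \ldots, a_{k-1}$, the sealing forces $Y \cap (T \upharpoonright \alpha)^k$ to be a maximal antichain below every $k$-tuple on level $\alpha$ of $T$. Hence $Y$ cannot have elements at any level $\geq \alpha$, so $Y \subseteq (T \upharpoonright \alpha)^k$ is countable, establishing fullness of $T$.

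For the ``consequently'' assertion, pick any limit level $\lambda$ of $T$, enumerate infinitely many distinct nodes $\{a_n \, : \, n \in \omega\}$ on $T_{\lambda}$ and set $T_n := T_{a_n}$; each $T_n$ is Suslin as a cone in the Suslin tree $T$, and any finite subproduct $T_{a_{n_0}} \times \cdots \times T_{a_{n_j}}$ is Suslin by fullness of $T$. The main technical obstacle is the zig-zag construction at limit stages: one must simultaneously satisfy the sealing condition for countably many tuples of varying but fixed length, which forces a diagonal enumeration of tuples in $T \upharpoonright \alpha$ together with careful bookkeeping ensuring that every chosen branch remains cofinal in $\alpha$ while still hitting $X_{\alpha}$ at the appropriate tuple.
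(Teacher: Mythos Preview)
Your overall strategy is sound and is one of the standard routes to a full Suslin tree, but there is a bookkeeping mismatch between your sealing trigger and your verification step. You trigger the sealing at stage $\alpha$ only when $X_\alpha$ codes a maximal antichain of the \emph{full} power $(T\upharpoonright\alpha)^{k_\alpha}$. In the verification, however, you start from a maximal antichain $Y$ of the cone product $T_{a_0}\times\cdots\times T_{a_{k-1}}$; the restriction $Y\cap (T\upharpoonright\alpha)^k$ is maximal only above the fixed stem $(a_0,\ldots,a_{k-1})$, not in all of $(T\upharpoonright\alpha)^k$, so your trigger never fires for it and the sealing clause gives you nothing. The fix is easy: either encode the stem $(a_0,\ldots,a_{k-1})$ together with $(k,Y)$ and change the trigger to ``$X_\alpha$ codes a maximal antichain of $(T_{a_0}\upharpoonright\alpha)\times\cdots\times(T_{a_{k-1}}\upharpoonright\alpha)$ for some tuple of distinct nodes coded by $X_\alpha$'', sealing only for tuples extending that stem; or equivalently extend $Y$ by the countable set of all level-$\beta$ $k$-tuples other than $(a_0,\ldots,a_{k-1})$ before encoding, so that the extended set really is maximal in the full product. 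With either repair your zig-zag argument goes through.

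For comparison, the paper does not prove this theorem but cites Todor\v{c}evi\'c's Handbook article and, in the paragraph following the statement, sketches the variant it actually needs inside $M_1$: at each limit stage one picks the branches to be \emph{generic for finite products} over the least countable initial segment of $M_1$ that sees the construction so far. That genericity approach sidesteps the bookkeeping issue entirely, since a single generic filter over a countable model automatically meets every maximal antichain of every finite cone product lying in that model; there is no need to predict a single $(k,X)$ per stage. Your direct $\diamondsuit$-prediction argument is more elementary and avoids the inner-model machinery, but the price is exactly the extra bookkeeping you omitted. Both approaches yield the ``consequently'' clause in the same way you indicate, by slicing a full tree at a fixed level.
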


The above proof, which is a refinement of Jensen's original construction of a Suslin tree in that one recursively picks (using the definable wellorder) at limit stages branches through $T$ which are generic for finite products over the least countable initial segment of $M_1$ which is able to see the construction up to that point, in fact relativizes down to $M_1 | \omega_1$, as the $\Delta^1_3$-definition of the wellorder of the $M_1$-reals can be applied inside $M_1 | \omega_1$, and the computation will always be correct. Hence, over $M_1 | \omega_1$ one can always define a full Suslin tree just as in $M_1$ and hence an $\omega$-sequence of independent Suslin trees.

The second parameter in the statement of the theorem, namely the ladder system $\vec{C}$ can be replaced by $M_1 | \omega_1$ as well, for $M_1 | \omega_1$ can compute a canonical ladder system with the help of the $M_1$-wellorder of the reals. 
\begin{theorem}
Working in $M_1$, there is a $\Sigma_1 ( \{ \omega_1 \} )$-definable $\omega$-length sequence $\vec{T}^0$ of independent Suslin trees. Also there is a $\Sigma_1 ( \{ \omega_1 \} )$-definable ladder system $\vec{C}$.
\end{theorem}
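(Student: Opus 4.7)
The plan is to leverage the $\Sigma_1(\{\omega_1\})$-definability of $\{M_1 | \omega_1\}$ established via the formula $(\ast)$ in the preceding discussion, and to observe that both $\vec{T}^0$ and $\vec{C}$ can be produced by absolute recursive constructions carried out internally to $M_1 | \omega_1$. The intended $\Sigma_1(\{\omega_1\})$-definition of each object then has the shape: ``there exists a transitive set $X$ satisfying $(\ast)$ with parameter $\omega_1$ such that the object is the output of an explicit recursion applied inside $X$.'' Since $(\ast)$ is $\Sigma_1(\{\omega_1\})$ and the recursion itself is $\Delta_1$ over $X$, the resulting definition has the required complexity.

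For $\vec{T}^0$, I work inside $M_1 | \omega_1$ and fix the $\Delta^1_3$-wellorder $<_{M_1}$ of the reals, whose definition relativizes down as noted. Using $<_{M_1}$ I first run Jensen's $\diamondsuit$-construction to obtain the canonical $\diamondsuit$-sequence whose existence was just argued, and then from it I build a full Suslin tree $T$ by the Jensen-style recursion in which, at each countable limit stage $\alpha$, one selects for every finite vector of nodes of level $\alpha$ a cofinal branch through the corresponding finite subproduct that is generic over the $<_{M_1}$-least countable $\mathcal{J}^{M_1}_{\eta}$ seeing the construction so far. The desired sequence $\vec{T}^0 = (T_n)_{n<\omega}$ is then extracted canonically from the fullness of $T$, for instance by setting $T_n := T_{a_n}$ where $(a_n)_{n<\omega}$ enumerates the nodes on level one of $T$ in the order inherited from $<_{M_1}$; independence of finite subproducts is immediate from fullness. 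For $\vec{C}$, at each countable limit $\alpha<\omega_1$ I let $C_\alpha$ be the $<_{M_1}$-least $\omega$-sequence cofinal in $\alpha$, which is trivially $\Delta_1$ over $M_1|\omega_1$.

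The only point requiring genuine verification, and the one I anticipate as the principal hurdle, though it has essentially been set up in the paragraphs preceding the theorem, is that the internal Jensen construction of the full Suslin tree really does succeed inside $M_1 | \omega_1$; equivalently, that the standard condensation step underlying both the $\diamondsuit$-argument and the full-Suslin-tree argument goes through. This amounts to showing that a countable $N \prec \mathcal{J}^{M_1}_{\beta}$ collapses to a genuine initial segment of $M_1$, which was handled above by invoking the condensation theorem for $M_1$ and ruling out its second, extender clause through the identity $\rho^{\bar N}_\omega = \omega_1^{\bar N}$ (inherited from the $\omega$-soundness of $\mathcal{J}^{M_1}_{\beta}$ by elementarity). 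Once this absoluteness is in hand, the two $\Sigma_1(\{\omega_1\})$-definitions described in the first paragraph correctly isolate $\vec{T}^0$ and $\vec{C}$, and the theorem follows.
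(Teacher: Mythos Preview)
Your proposal is correct and follows essentially the same approach as the paper: the theorem is really a summary of the preceding discussion, and you have accurately reconstructed its content---define each object as the output of an explicit $\Delta_1$ recursion carried out inside the $\Sigma_1(\{\omega_1\})$-definable set $M_1|\omega_1$ (using the $<_{M_1}$-wellorder for the $\diamondsuit$-sequence, the full Suslin tree, and the ladder system), with the condensation argument already handled above supplying the one nontrivial ingredient. Your explicit choice of $T_n := T_{a_n}$ for level-one nodes $a_n$ and of $C_\alpha$ as the $<_{M_1}$-least cofinal $\omega$-sequence simply spells out what the paper leaves implicit.
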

For the rest of this article, $\vec{T}^0$ and $\vec{C}$ are always as defined above.

\subsection{Definition of $W_0$}

We finally have all the ingredients to successfully define the suitable ground model $W_0$ over which coding arguments will work in a nice way.
To form $W_0$ we start with $M_1$ as our ground model and let $\delta$ be its unique Woodin cardinal. Using a $\delta$-long iteration we shall produce a generic extension $W_0$ of $M_1$ which forces with forcings of the form
\begin{enumerate}
\item $\forceP_{\beta \gamma \delta}$  for triples $\omega_1 < \beta < \gamma < \delta <\omega_2$ of uncountable cofinality.
\item $\forceP_r$ for reals $r$.
\item $\mathbb{A}_F (X)$ for $X \subset \omega_1$.
\item $\forceP_J$  which denotes Jech's forcing for adding a Suslin tree.
\item $\mathbb{S} (\vec{S} )$, for $\vec{S}$ a maximal antichain in $P(\omega_1) \slash \NS$.
\end{enumerate}
We can use either Miyamoto's nice iterations (see \cite{Miyamoto}) or revised countable support iteration due to a result of Fuchs-Switzer (see \cite{FuSw} ). Both iterations have the feature that ``Suslin tree preservation$"$ is preserved when iterating with nice iterations or RCS-iterations. This preservation is key as we will use the added Suslin trees for a second iteration over $W_0$ which will use these Suslin trees for coding purposes.

Without going into details we just state that we can adapt Shelah's proof, that a Woodin cardinal suffices to get $\NS$ saturated in this context as well. So $\NS$ will be saturated  in $W_0$ and in fact its saturation is ccc indestructible, that is further ccc forcings preserve its saturation. For details see \cite{NS Delta_1} and \cite{FH}. 

 To summarize, the main features of $W_0$ are:
\begin{enumerate}
\item $\aleph_1^{M_1} = \aleph_1^{W_0}$.
\item The Woodin cardinal $\delta$ becomes $\aleph_2$ in $W_0$.
\item $\NS$ is saturated.
\item The saturation of $\NS$ is indestructible by further ccc forcings, that is $\NS$ remains saturated in any outer model obtained with a forcing with the ccc over $W_0$.

\item There is a $\delta=\omega_2$-length, independent sequence $\vec{T}$ of $\omega_1$-Suslin trees.
\item For every subset $X$ of $\omega_1$, there is a real $r_X$ which is an almost disjoint code of $X$ with respect to the canonical $M_1$-definable, almost disjoint family of reals $D \subset \omega^{\omega} \cap M_1$.
\item Every triple of limit ordinals $(\alpha, \beta, \gamma) < \omega_2$ of uncountable cofinality is stabilized in the sense of $(\dagger)$.
\item Every real is coded by a triple of limit ordinals of uncountable cofinality $(\alpha,\beta,\gamma)$.
\end{enumerate}

We shall neither define the iteration over $M_1$ which will yield $W_0$ in detail, nor shall we prove that $W_0$ has indeed the mentioned properties and refer once more to \cite{NS Delta_1}, where everything is shown. There is a canonical, definable well-order of $P(\omega_1)$ in $W_0$.
 
  \begin{definition}\label{Definition Wellorder}
  Let $X, Y \in P(\omega_1)^{W_0}$ then let
  $X \unlhd Y$ if the antilexicographically least triple of ordinals $(\alpha_0, \beta_0, \gamma_0)$
  which code a real $r_0$ which codes $X$ with the help of the a.d. family $F$ is antilexicographically less or equal than 
  the antilexicographically least triple of ordinals $(\alpha_1, \beta_1, \gamma_1)$ which codes a real $r_1$ which in turn
  codes $Y$.

  \end{definition}

The definable wellorder $\unlhd$ of $P(\omega_1)$ unlocks a definition for a
canonical sequence of length $\omega_2$ of independent Suslin trees. The first entry of 
that sequence is, for technical reasons which will 
become clear later defined differently.
We start with our fixed independent $\omega$-sequence $\vec{T}^0$ and let $\vec{T}^{\alpha}$ be the $\unlhd$-least $\omega_1$ sequence of Suslin trees such that $\bigcup_{\beta < \alpha} \vec{T}^{\beta}$ concatenated with $\vec{T}^{\alpha}$ remains an independent sequence of Suslin trees.

As the wellorder $\unlhd$ in fact talks about the reals which are almost disjoint codes for the corresponding elements of $P(\omega_1)$ it will be useful to give that sequence of reals a name as well. For every element $X$ in $P(\omega_1)$, the set of reals which are almost disjoint codes for $X$ is infinite. In the following we nevertheless talk about \emph{the} real $r_X$ which codes $X \in P(\omega_1)$ by which we mean the $\unlhd$-least such real coding $X$.

\begin{definition}
In $W_0$, 
let $(r_i \, : \, i < \omega_2)$ be the sequence of reals defined recursively
 as follows:
 
 \begin{itemize}
  \item $r_0$ is the real which codes a subset of $\omega_1$, which codes the independent $\omega$-sequence of Suslin trees $\vec{T}^0$.
 
  \item $r_{\alpha}$, for $\alpha > 0$ is the least real which is an almost disjoint code for the $\unlhd$-least subset of $\omega_1$ which itself is a code for an $\omega_1$-sequence of independent Suslin trees $\vec{T}^{\alpha}$,
  such that the concatenated sequence of the union of the Suslin trees coded in $(r_{i} \, : \, i < \alpha)$ and $\vec{T}^{\alpha}$
  forms an independent sequence again.
 \end{itemize}
 
\end{definition}

What is very important is that this definable $\omega_2$-sequence of
independent Suslin trees will be definable in certain outer models of $W_0$.

 \begin{lemma}\label{Definability of Suslin trees}
 Suppose that $W^{\ast}$ is a set-generic, ccc extension of $W_0$.
 Then $W^{\ast}$ is still able to define the $\omega_2$-sequence of independent
 $W_0$-Suslin trees $\vec{T}$.
\end{lemma}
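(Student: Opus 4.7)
The plan is to exploit the ccc-invariance of the Caicedo--Velickovic coding mechanism, which---as emphasised at the start of this section---guarantees that no new triples of ordinals can become codes for reals under a ccc forcing. Consequently $H(\omega_2)^{W_0}$, together with the wellorder $\unlhd$ of $P(\omega_1)^{W_0}$, will remain uniformly definable in $W^\ast$, and once this is secured the recursive clauses defining $\vec{T}$ can simply be re-run inside $W^\ast$, with all occurrences of ``Suslin tree'' and ``independent'' interpreted in $H(\omega_2)^{W_0}$ rather than in the (potentially larger) universe $W^\ast$.

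First I would observe that since the ccc forcing producing $W^\ast$ preserves $\omega_1$, $W^\ast$ is still an $\omega_1$-preserving generic extension of $M_1$; hence the $\Sigma_1(\{\omega_1\})$-formula $(\ast)$ computes $M_1|\omega_1$ inside $W^\ast$. From $M_1|\omega_1$ one recovers, $\Sigma_1(\{\omega_1\})$-uniformly, the canonical almost disjoint family $D\subset\omega^\omega\cap M_1$, the ladder system $\vec{C}$, and the base sequence $\vec{T}^0$ of independent Suslin trees.

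Next I would define $H(\omega_2)^{W_0}$ in $W^\ast$. By property (8) of $W_0$, every real $r\in\omega^\omega\cap W_0$ is Caicedo--Velickovic coded by some stabilized triple $(\alpha,\beta,\gamma)<\omega_2^{W_0}$ of ordinals of uncountable cofinality; by property (6), every $X\in P(\omega_1)^{W_0}$ has an almost disjoint code $r_X\in\omega^\omega\cap W_0$ relative to $D$. Since ccc forcing introduces no new Caicedo--Velickovic codings, the predicate ``$r$ is coded by some stabilized triple'' is available in $W^\ast$ with no extra parameters; this yields a definition of $\omega^\omega\cap W_0$, and then of $P(\omega_1)^{W_0}$ via the absolute equivalence
\[
X=\{\alpha<\omega_1:r\cap f_\alpha\text{ is finite}\},
\]
where $(f_\alpha)_{\alpha<\omega_1}$ is the enumeration of $D$ read off from $M_1|\omega_1$. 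Every element of $H(\omega_2)^{W_0}$ is coded by some $X\in P(\omega_1)^{W_0}$, so $H(\omega_2)^{W_0}$ becomes definable in $W^\ast$. In particular the wellorder $\unlhd$, which is defined purely in terms of the antilexicographic ordering on coding triples, is computed identically in $W_0$ and in $W^\ast$.

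Finally I would recursively recover $\vec{T}$ in $W^\ast$ by the very clauses of the definition preceding the lemma, but with every occurrence of ``Suslin tree'' and ``independent sequence of Suslin trees'' relativised to $H(\omega_2)^{W_0}$: $\vec{T}^0$ is the sequence coded by $r_0$, and given $(\vec{T}^\beta)_{\beta<\alpha}$, $\vec{T}^\alpha$ is the $\unlhd$-least $\omega_1$-sequence such that $H(\omega_2)^{W_0}\models$ ``the concatenation with the previously chosen trees is independent''. The main obstacle is precisely that Suslinness and independence are \emph{not} absolute between $W_0$ and $W^\ast$, for a ccc extension may destroy Suslinness of ground-model trees; this is exactly why the evaluation must happen inside the definable class $H(\omega_2)^{W_0}$ rather than in $W^\ast$ itself. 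Once the properties are evaluated there, the induction in $W^\ast$ is forced to make the same $\unlhd$-least choice at every stage as in $W_0$, and therefore recovers $\vec{T}$ verbatim.
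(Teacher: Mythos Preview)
Your outline matches the paper's approach exactly: define $P(\omega_1)^{W_0}$ (and hence $H(\omega_2)^{W_0}$ and $\unlhd$) inside $W^\ast$, then re-run the recursion for $\vec{T}$ with all the relevant notions relativised there. The one place where you diverge is that you treat the assertion ``ccc forcing introduces no new Caicedo--Velickovic codings'' as a black box, citing the informal remark at the start of the section. In the paper that remark is a \emph{preview}; the present lemma is where it is actually proved, and that argument is essentially the entire content of the proof.

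Concretely, what you are missing is this: suppose $r\in W^\ast$ is coded by $(\alpha,\beta,\gamma)$ via a reflecting sequence $(N_\xi:\xi<\omega_1)\in W^\ast$. Each $N_\xi$ is a countable set of ordinals in a ccc extension of $W_0$, so it can be covered by a countable set of ordinals from $W_0$; from this one manufactures a continuous increasing sequence $(P_\xi:\xi<\omega_1)\in W_0$ which agrees with $(N_\xi)$ on a club $C\in W^\ast$. Now the set
\[
\{\zeta<\omega_1:\exists\nu<\zeta\ \bigcup_{\eta\in(\nu,\zeta)}s_{\alpha\beta\gamma}(P_\eta,P_\zeta)=r\}
\]
lies in $W_0$ and contains $C$; since ccc forcing preserves stationarity, it must contain a club already in $W_0$, so $r$ was coded by $(\alpha,\beta,\gamma)$ in $W_0$. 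This is the step your write-up should supply rather than assume; once it is in place, the rest of your argument (recovering $\omega^\omega\cap W_0$, then $P(\omega_1)^{W_0}$ via almost disjoint decoding against $D$, then $\unlhd$, then $\vec{T}$) goes through exactly as you describe and exactly as in the paper.
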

\begin{proof}
 
 Note first that if $r \in W^{\ast}$ is a real coded by a triple of ordinals in $(\alpha, \beta, \gamma)$ in
 $W^{\ast}$, then there is a reflecting sequence $(N_{\xi} \, : \, \xi < \omega_1)$ in
 $W^{\ast}$, $\bigcup_{\xi < \omega_1} N_{\xi} = \gamma$, such that 
 for club-many $\xi$, $r = \bigcup_{\eta \in (\nu, \xi)} s_{\alpha \beta \gamma} (N_{\eta}, N_{\xi})$.
 As $W^{\ast}$ is a ccc-extension of $W_0$, there is a reflecting sequence $(P_{\xi} \, : \, \xi < \omega_1)$ which is 
 an element in $W_0$, and such that $C:=\{ \xi < \omega_1 \, : \, P_{\xi} = N_{\xi}\}$ is 
 club containing in $W^{\ast}$. Indeed, every element of $(N_{\xi} \, : \, \xi < \omega_1)$ is a countable set of ordinals in $W^{\ast}$, thus can be covered by a countable set of ordinals from $W_0$.  As a consequence the sequence $(N_{\xi} \, : \, \xi < \omega_1)$ can be transformed into a continuous, increasing sequence $(P_{\xi} \, : \, \xi < \omega_1)$ in $W_0$ which coincides on its limit point with $(N_{\xi} \, : \, \xi < \omega_1)$, just as desired.
 
 But as ccc extensions preserve stationarity, the set $$\{ \zeta < \omega_1 \, : \, \exists \nu < \zeta \, ( \bigcup_{\eta \in (\nu, \zeta))}
 s_{\alpha \beta \gamma} (P_{\eta}, P_{\zeta}) = r )\}$$ which is an element of $W_0$ must contain a club from 
 $W_0$. Hence $r$ is coded by the triple $(\alpha, \beta, \gamma)$ already in $W_0$.
 
 As a consequence $P(\omega_1)^{W_0}$ is definable in $W^{\ast}$, it will be precisely
 the set of subsets of $\omega_1$ which have reals which code it with the help of 
 the almost disjoint family $F$, and such that these reals are themself
 coded by triples of ordinals below $\omega_2$ in the sense of $(\ddagger)$.
 
 Thus $W^{\ast}$ can define $W_0$-Suslin trees and our wellorder $<$ on $P(\omega_1)^{W_0}$, hence will be able to define 
 the $\omega_2$-sequence of independent Suslin trees of $W_0$.
 
\end{proof}

\section{Towards a proof of the theorem}
In this section we shall prove the main theorem of this work. We aim to prove it starting from our ground model $W_0$.

We shall sketch, omitting a lot of technical issues, a simplified idea of the proof of the theorem first.

In a finitely supported iteration over $W_0$ we will use the definable list of independent Suslin trees to code up all the characteristic functions of stationary subsets of $\aleph_1$ with the help of branching or specializing elements of the list. The fact that $H(\omega_2)^{W_0}$ is $\Sigma_1$-definable in all ccc extensions of $W_0$ can be utilized to define (with a $\Sigma_1$-definition) a class of suitable, $\aleph_1$-sized models of fragments of $\ZFC$ which are sufficient to define the list of Suslin trees properly and consequently enable a boldface $\Sigma_1$ definition of stationarity. Indeed the assertion "$S$ is stationary" will be equivalent to the statement that there is a suitable model which defines some $\omega_1$ block of independent Suslin trees and this block has a pattern of branches and specializing functions which correspond to the characteristic function of $S$.

\subsection{The second iteration.}
\subsubsection{An outline of the idea}
Let us quickly describe the situation we are in. We have obtained a model $W_0=M_1[G]$ with the following
properties:
\begin{enumerate}
\item $\aleph_1^{M_1} = \aleph_1^{W_0}$.
\item The Woodin cardinal $\delta$ becomes $\aleph_2$ in $W_0$.
\item $\NS$ is saturated.
\item The saturation of $\NS$ is indestructible by further ccc forcings, that is $\NS$ remains saturated in any outer model obtained with a forcing with the ccc over $W_0$.

\item There is a $\delta=\omega_2$-length, independent sequence $\vec{T}$ of $\omega_1$-Suslin trees which is $\Sigma_1 ( \{ \omega_1 \} )$-definable over $W_0$.
\item For every subset $X$ of $\omega_1$, there is a real $r_X$ which is an almost disjoint code of $X$ with respect to the canonical $M_1$-definable, almost disjoint family of reals $D \subset \omega^{\omega} \cap M_1$.
\item Every triple of limit ordinals $(\alpha, \beta, \gamma) < \omega_2$ of uncountable cofinality is stabilized in the sense of $(\dagger)$.
\item Every real is coded by a triple of limit ordinals of uncountable cofinality $(\alpha,\beta,\gamma)$.
\end{enumerate}
The patient reader will notice that we have not touched the issue of the definabilty of the nonstationary ideal or the $\Delta^1_4$-definable well-order of the reals. The second iteration is entirely concerned with coding these two family of sets. We will use the fact that a Suslin tree $T$ can be destroyed in two mutually exclusive ways using forcings with the countable chain condition: either we add a branch to $T$ or an uncountable antichain. This enables us to write the characteristic function reals or of stationary subsets of $\omega_1$ into the definable sequence $\vec{T}$. The fact that $\vec{T}$ is an independent sequence has as a consequence that the destruction of fixed elements of $\vec{T}$ will not affect the Suslinity of the other elements of $\vec{T}$.

Thus the following strategy is promising. We start an $\omega_2$-length iteration of forcings which either specialize or shoot a branch through elements of $\vec{T}$. The iteration uses finite support.
We will enumerate in an $\omega_2$-length list all the right pairs of reals, which eventually should be all the pairs of our to be defined well-order and the stationary subsets of $\omega_1$, pick the sets listed (given by some fixed bookkeeping function) one after another and code the characteristic function of it into the  $\omega_1$-blocks of our definable list of Suslin trees $\vec{T}$. 
This coding will create new stationary subsets which we list again and code up using fresh elements of $\vec{T}$ which we have not destroyed yet. Bookkeeping will yield that after $\omega_2$-many stages we will catch our tail.

The result will be a generic extension $W_0[H]$ of $W_0$ by a forcing which has the countable chain condition. 
$W_0[H]$ can define its stationary subsets of $\omega_1$ in a new way: $S \subset \omega_1$ is stationary if and only if there is an $\omega_1$-block in $\vec{T}$ (note that $\vec{T}$ is still definable in $W_0[H]$ by Lemma \ref{Definability of Suslin trees}) such that the characteristic function of $S$ is written into this $\omega_1$-block of elements of $\vec{T}$. $W_0 [H]$ can also define a well-order $<$ on its reals via $x<y$ if there is an $\omega$-block of trees from $\vec{T}$ such that the characteristic function of the pair $(x,y)$ is coded into it.

A calculation yields that this new definition of stationarity is boldface $\Sigma_2$ over the $H(\omega_2)$ of $W_0[H]$, and the definition of $<$ is of the same complexity, thus it seems like we have not gained anything substantial. The next idea is to force over $W_0$ to obtain a universe $W_1$ which will have an easy definition for $\aleph_1$-sized, transitive models of $\ZFP$ which are sufficiently clever to correctly define the sequence $\vec{T}$ up to their ordinal height, plus are correct about the predicate ``$S$ is stationary$"$. This is nontrivial, as $\aleph_1$-sized models typically will not be correct when defining $\vec{T}$. 

Working over $W_1$  it will be possible to alter the above iteration which makes a simple definition of a class of $\aleph_1$-sized, so-called suitable models possible, which can be utilized to read off the created information on the sequence of Suslin trees. This will buy us one quantifier and we eventually arrive at a $\Sigma_1 (\vec{C}, \vec{T}^0)$-predicate for stationarity. Further refinements will yield that one can add additional almost disjoint coding forcings of carefully defined subsets of $\omega_1$ to obtain a $\Sigma^1_4$-definition of the well-order.

\subsubsection{Suitability}
We begin to define thoroughly how the second iteration which will yield $W_1$, using $W_0$ as a ground model does look like. 
We already hinted that, in order to lower the complexity of 
a description of stationarity we need a new notion for suitable models which
will be able to define the sequence of Suslin trees $\vec{T}$ correctly.
With the notion of suitability it will become possible 
to witness stationarity already in $\aleph_1$-sized $\ZFP$ models,
as we shall see soon.
\begin{definition}
 Let $M$ be a transitive model of $\ZFP$ of size $\aleph_1$. We say that $M$ is pre-suitable if it satisfies the 
 following list of properties:
 \begin{enumerate}
  \item $M_1 | \omega_1 \subset M$ so in particular our distinguished $\omega$-sequence of independent Suslin tree $\vec{T}^0$ and our distinguished ladder system $\vec{C}$ are both in $M$.
  \item $\aleph_1$ is the biggest cardinal in $M$ and $M \models \forall x (|x|\le \aleph_1)$.
  \item Every set in $M$ has a real in $M$ which codes it in the sense of almost disjoint coding
  relative to the fixed family of almost disjoint reals $F$.
  \item Every real in $M$ is coded by a triple of ordinals in $M$, i.e. 
  if $r \in M$ then there is a triple $(\alpha, \beta, \gamma) \in M$ and a reflecting
  sequence $(N_{\xi} \, : \, \xi < \omega_1)\in M$ which code $r$.
  \item Every triple of ordinals in $M$ is stabilized in $M$: for $(\alpha, \beta, \gamma)$
  there is a reflecting sequence $(P_{\xi} \, : \, \xi < \omega_1) \in M$ which witnesses that
  $(\alpha, \beta, \gamma)$ is stabilized.
 \end{enumerate}

\end{definition}

Note that the statement '``$M$ is a pre-suitable model$"$ is completely internal in $M$ and hence a $\Sigma_1(\vec{C}, \vec{T}^0)$ and in particular a $\Sigma_1 (\{\omega_1 \} )$-formula.
Further note that by the proof of Lemma \ref{Definability of Suslin trees}, if $W^{\ast}$ is a ccc extension of $W_0$ and $M$ is a pre-suitable
model in $W^{\ast}$ then $M \subset W_0$, as ccc extensions will not add new reflecting sequences.
\begin{definition}
 Let $M$ be a pre-suitable model. We say that $M$ is $W_0$-absolute for Susliness
 if $T \in M$ is an element from $W_0$ and $M \models T \text{ is Suslin}$, then $T$ is Suslin in $W_0$.
 Likewise we say that $M$ is $W_0$-absolute for stationarity if
 $S \in M$ and $M$ thinks that
 $S$ is a stationary subset of $\omega_1$ then $S$ is a stationary subset of $\omega_1$ in $W_0$. 
 A pre-suitable model which is $W_0$-absolute for stationarity and Susliness is called suitable.
\end{definition}

We have already seen in Lemma \ref{Definability of Suslin trees} that ccc extensions of $W_0$ will still be able to define our
$\omega_2$-sequence of independent $W_0$-Suslin trees $\vec{T}$. With the notion of suitability
we can localize this property in the following sense:
\begin{lemma}
 Let $W^{\ast}$ be a ccc extension of $W_1$, and let $M \in W^{\ast}$ be a suitable model.
 If $M$ computes the $\omega_2$-length sequence of independent Suslin trees from $W_0$ using its local wellorder $\unlhd_M$,
 then the computation will be correct, i.e. $\vec{T}^M = \vec{T} \cap M$.
\end{lemma}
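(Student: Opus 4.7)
The plan is to prove the claim by induction on $\alpha < (\omega_2)^M$, showing that whenever $\vec{T}^{M,\beta} = \vec{T}^\beta$ holds for all $\beta < \alpha$, one has $\vec{T}^{M,\alpha} = \vec{T}^\alpha$ as well. The base case $\alpha = 0$ is immediate, since both the $M$- and the $W_0$-recursions start from the fixed $\omega$-sequence $\vec{T}^0$, which lies in $M_1 | \omega_1 \subset M$ by pre-suitability.

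First I would verify that the wellorder $\unlhd$ is absolute between $M$ and $W_0$ on $P(\omega_1) \cap M$. Two observations suffice: the almost disjoint decoding using the canonical family $F \subset M_1 \subset M$ is trivially absolute between $M$ and $W_0$; and for any triple $(\alpha,\beta,\gamma) \in M$, the Caicedo--Velickovic triple coding is absolute, because by pre-suitability $M$ contains a reflecting sequence stabilizing the triple, any two reflecting sequences for a stabilized triple agree on a club of $\omega_1$, and hence the real (if any) coded by the triple is the same in $M$ and in $W_0$ and lies in $M$. It follows that for $X \in M$ the $\unlhd$-rank is computed identically in $M$ and in $W_0$.

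The decisive step is the $\unlhd$-initial segment property: if $X \in M$ and $Y \in P(\omega_1)^{W_0}$ with $Y \unlhd X$, then $Y \in M$. Since $X \in M$ has an almost disjoint code in $M$ coded by some triple in $M$, the $W_0$-antilex-minimal triple $(\alpha_W, \beta_W, \gamma_W)$ determining the $\unlhd$-rank of $X$ is antilex below a triple in $M$; because the coordinates of any Caicedo--Velickovic triple are strictly increasing and $M$ is transitive, every coordinate of $(\alpha_W, \beta_W, \gamma_W)$ already lies in $M$, so the triple itself belongs to $M$. The triple of $Y$ is antilex below this one, hence also in $M$, and the absoluteness established in the previous step yields $r_Y \in M$, and then, via a.d.\ decoding through $F \in M$, $Y \in M$.

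With these tools the inductive step is now routine. The candidate $\vec{T}^{M,\alpha} \in M$ is an $M$-independent sequence of $M$-Suslin trees extending $\bigcup_{\beta < \alpha} \vec{T}^\beta$; since $M$ is $W_0$-absolute for Susliness, it is also a $W_0$-independent sequence of $W_0$-Suslin trees with the same property, so $\vec{T}^\alpha \unlhd \vec{T}^{M,\alpha}$. By the initial segment property $\vec{T}^\alpha \in M$, and because $\omega_1^M = \omega_1^{W_0}$ every $M$-antichain is already an antichain in $W_0$, so Susliness and independence are downward absolute from $W_0$ to $M$, making $\vec{T}^\alpha$ a valid candidate inside $M$. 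Minimality in $M$ together with the absoluteness of $\unlhd$ on $P(\omega_1) \cap M$ then forces $\vec{T}^{M,\alpha} \unlhd \vec{T}^\alpha$, and consequently $\vec{T}^{M,\alpha} = \vec{T}^\alpha$. The main obstacle is the initial segment property: it is there that pre-suitability (via stabilization), transitivity of $M$, and the antilexicographic structure on coding triples must combine to guarantee that any $W_0$-object of $\unlhd$-rank below something in $M$ is in fact captured by $M$.
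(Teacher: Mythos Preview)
Your approach is essentially the paper's: induct on the index, use transitivity plus the antilexicographic order to place the $W_0$-minimal coding triple inside $M$, invoke pre-suitability to get a stabilized reflecting sequence for that triple in $M$, and conclude that $M$ recovers the correct real and hence the correct block. Your packaging via a general ``$\unlhd$-initial segment property'' is a clean reformulation of exactly the same argument.

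There is one genuine omission. In your absoluteness step you write that ``any two reflecting sequences for a stabilized triple agree on a club of $\omega_1$, and hence the real (if any) coded by the triple is the same in $M$ and in $W_0$.'' The club on which the two sequences agree lives in $W_0$, not in $M$, so this alone does not show that $M$ sees $(P_\xi)$ coding the right real. The paper closes this gap explicitly by invoking $W_0$-absoluteness for stationarity (the second half of suitability, not pre-suitability): if the set of $\xi$ at which $(P_\xi)$ fails to converge to $r_{\eta+1}$ were stationary in $M$, it would be stationary in $W_0$, contradicting the fact that $(P_\xi)$ codes $r_{\eta+1}$ there. Your summary lists only ``pre-suitability (via stabilization), transitivity of $M$, and the antilexicographic structure'' as the ingredients; you must add stationarity absoluteness, without which the ``hence'' does not go through. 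With that addition your proof is correct and matches the paper.
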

\begin{proof}
 We shall show inductively that for every $\eta \in M$, the $\eta$-th
 block of $\vec{T}$, $\vec{T}^{\eta}$ will be computed correctly by $M$ (if an element of $M$).
 For $\eta=0$ this is true as $\vec{T}^0$ is by definition of suitability an element of $M$.
 Let $\eta \in M$ and assume by induction that the sequence $\vec{T}$ up to the $\eta$-th block is computed correctly by $M$. We shall show that $M$ computes the $\eta+1$-th block correctly. Recall that $\vec{T}^{\eta+1}$ was defined to be the $\unlhd$-least $\omega_1$-block of independent Suslin trees such that $\bigcup_{\beta \le\eta} \vec{T}^{\beta}$ concatenated with $\vec{T}^{\eta+1}$ remains an independent sequence in $W_0$. 
 
 Assume for a contradiction that the suitable $M$ computes $\vec{T}'$ as its own different version of $\vec{T}^{\eta+1}$. Thus there is a real
 $r'$ which codes $\vec{T}'$, and $r'$ itself is coded into a triple of $M$-ordinals $(\alpha', \beta', \gamma')< \omega_2^{M}$. Let $r_{\eta+1}$ be the real which codes $\vec{T}^{\eta+1}$.
 We claim that $r_{\eta+1} < r'$ by which we mean that the least triple of ordinals which codes $r_{\eta+1}$ is antilexicographically less than the least triple which codes $r'$. Otherwise $r'< r_{\eta+1}$ and by the Suslin-absoluteness of $M$
 the independent-$M$-Suslin trees coded into $r'$ would be an independent $\omega_1$-sequence of Suslin trees in $W_0$,
 moreover they would still form an independent sequence when concatenated with $\vec{T}^{\eta}$ in $W_0$ which
 is a contradiction to the way $\vec{T}^{\eta+1}$ was defined.
 
 So $r_{\eta+1}<r'$, so the least triple of ordinals $(\alpha, \beta, \gamma)$ coding $r_{\eta+1}$
 is antilexicographically less than $(\alpha', \beta', \gamma')$. Note that the suitability of $M$ implies
 that $(\alpha, \beta, \gamma)$ is stabilized in $M$. Thus there is a reflecting sequence
 $(P_{\xi} \, : \, \xi < \omega_1)$ in $M$ witnessing this. As $W^{\ast}$ is a ccc extension of $W_0$ we can assume
 that the sequence is in fact an element of $W_0$. At the same time there is a reflecting 
 sequence $(N_{\xi} \, : \, \xi < \omega_1)$ in $W_0$ which witnesses that $r_{\eta+1}$ is coded by $(\alpha, \beta, \gamma)$.
 By the continuity of both sequences, there is a club $C$ in $W_0$ such that
 $\forall \xi \in C ( N_{\xi} = P_{\xi}).$ Thus the limit points of $C$ witness that
 in fact the sequence $(P_{\xi} \, : \, \xi < \omega_1)\in M$ codes $r_{\eta+1}$ as well but the club
 $C$ is in $W_0$, so we need an additional argument to finish. Recall that the suitability of $M$
 implies that $M$ is absolute for stationarity, thus if the set 
 $\{ \xi < \omega_1 \, : \, \exists \nu < \xi ( \bigcup_{\zeta \in (\nu, \xi)} s_{\alpha \beta \gamma} (P_{\zeta}, P_{\xi}) \ne r_{\eta+1}) \}$
 would be stationary in $M$ it would be stationary in $W_0$ which is a contradiction.
 So $M$ computes $r_{\eta+1}$ correctly and the rest of the inductive argument can be repeated exactly as 
 above to show that $\vec{T}^M = \vec{T} \upharpoonright (M \cap Ord)$ as desired.
 
  \end{proof}

So suitable models will compute $\vec{T}$ correctly, and if we start to write information into $\vec{T}$, a suitable model can be used to read it off. In $W_0$ cofinally many suitable model below $\omega_2$ exist.

\begin{lemma}
Work in $W_0$. If we set
$$ P:= \{ \eta < \omega_2 \, : \, \exists M (M \text{ is suitable } \land M \cap Ord = \eta\}$$ then $P$ is unbounded in $\omega_2$.
\end{lemma}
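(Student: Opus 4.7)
The plan is to realize $M$ as the transitive collapse of a suitable elementary submodel of $H(\omega_2)^{W_0}$. Given $\alpha < \omega_2$, I would use L\"owenheim-Skolem to pick $N \prec H(\omega_2)^{W_0}$ of size $\aleph_1$ with $\alpha + \omega_1 \subseteq N$, with $M_1 | \omega_1 \subseteq N$ (as a subset, not just as an element), and containing the parameters $\vec{T}$, $\vec{C}$, $F$ and the wellorder $\unlhd$. Since each of these has size at most $\aleph_1$, this can be arranged while keeping $|N| = \aleph_1$. Let $\pi : N \to M$ be the Mostowski collapse. Because $\alpha + \omega_1 \subseteq N$, the critical point of $\pi$ lies strictly above $\alpha$, so $\pi$ fixes $\alpha$ and hence $M \cap \mathrm{Ord} > \alpha$. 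Moreover, $\pi$ is the identity on any element whose transitive closure is bounded below $\omega_1$, so $M_1 | \omega_1 \subseteq M$ transitively and in particular $\vec{T}^0, \vec{C} \in M$.

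Pre-suitability is then verified by elementarity. Condition (2) holds because $H(\omega_2)^{W_0}$ satisfies ``$\aleph_1$ is the biggest cardinal and $\forall x\,|x| \le \aleph_1$''. Condition (3) follows from property (6) of $W_0$ (every subset of $\omega_1$ admits an almost disjoint code with respect to $F$), after using the canonical coding of an arbitrary element of $H(\omega_2)$ by a subset of $\omega_1$. Condition (4) follows from property (8); the existence of a coding triple for a given real is a $\Sigma_1$ assertion about reflecting sequences and descends through $\pi$. Condition (5) is the most delicate: property (7) only supplies stabilization for triples of uncountable cofinality, but by inspection of the definition any continuous increasing $\omega_1$-chain of countable subsets of $\delta$ cofinal in $\delta$ forces $\mathrm{cf}(\delta) = \omega_1$, so stabilization is meaningful only in that case. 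Hence (5) should be read as ranging over $M$-uncountable-cofinality triples, which correspond via $\pi^{-1}$ to triples in $N$ of $H(\omega_2)^{W_0}$-cofinality $\omega_1$; for these property (7) supplies a reflecting witness that $\pi$ pushes down to $M$.

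Finally, suitability (absoluteness for Suslin-ness and stationarity) is derived by pulling the relevant statements back: if $M \models T \text{ is Suslin}$ for some $T \in M$, then setting $T' = \pi^{-1}(T)$ the isomorphism $\pi$ yields $N \models T' \text{ is Suslin}$, and by elementarity $H(\omega_2)^{W_0} \models T' \text{ is Suslin}$; since any antichain of $T'$ lies in $H(\omega_2)^{W_0}$, $T'$ is Suslin in $W_0$, and the tree-isomorphism $\pi \restriction T' : T' \to T$ itself lives in $W_0$, so $T$ is Suslin in $W_0$ as well. Stationarity is handled identically (clubs and antichains in $\omega_1$ live in $H(\omega_2)^{W_0}$). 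This produces a suitable $M$ with $M \cap \mathrm{Ord} > \alpha$, so $P$ is unbounded. I expect the main subtlety to lie in interpreting condition (5) as restricted to $M$-uncountable-cofinality triples, together with the bookkeeping needed to guarantee $N$ is closed enough that the internal computation of $\vec{T}$ inside $M$ via $\unlhd^M$ truly agrees with $\vec{T} \restriction (M \cap \mathrm{Ord})$, as required by the preceding lemma.
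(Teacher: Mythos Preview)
Your argument is correct, and it is genuinely different from the paper's. The paper does not use an elementary-submodel-and-collapse argument at all; instead it exploits the iteration that produced $W_0$: for each Mahlo $\kappa<\delta$ it takes $M=H(\omega_2)^{V[G_\kappa]}$, observes this is pre-suitable because the bookkeeping has already stabilized all relevant triples and coded all reals below $\kappa$, and then obtains $W_0$-absoluteness for Suslinness and stationarity from the fact that the tail $\forceP_{[\kappa,\delta)}$ is a nicely supported iteration of semiproper, Suslin-tree-preserving forcings. Since Mahlo cardinals are unbounded in the Woodin cardinal $\delta=\omega_2^{W_0}$, this yields unboundedly many suitable models. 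Your route is more elementary and more robust: it uses only the listed first-order properties (1)--(8) of $W_0$ together with a Mostowski collapse, and never looks inside the iteration. The paper's route, by contrast, simultaneously manufactures the concrete family $U'=\{H(\omega_2)^{V[G_\kappa]}:\kappa\text{ Mahlo}\}$ that is immediately used in the next step of the construction. Your reading of condition~(5) as restricted to triples of uncountable $M$-cofinality is the intended one (and is how the paper uses it), and your closing worry about $M$ computing $\vec{T}$ correctly is unnecessary here: that is exactly the content of the preceding lemma, which applies to \emph{any} suitable $M$ and is not part of what must be verified for this statement.
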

\begin{proof}
Recall the iteration $(\forceP_{\alpha} \, : \, \alpha < \delta)$ to force $W_0$. Whenever we are at an intermediate stage $\kappa < \delta$ such that $\kappa$ is Mahlo then, if $G_{\kappa}$ denotes the generic filter for $\forceP_{\kappa}$, $H(\omega_2)^{V[G_{\kappa}]}$ will be a pre-suitable model.
Moreover, by the properties of nicely supported iterations, stationary subsets of $\omega_1$ in $H(\omega_2)^{V[G_{\kappa}]}$ will remain stationary in $W_0$ and Suslin trees in $H(\omega_2)^{V[G_{\kappa}]}$ will remain Suslin trees in $W_0$, as the tail iteration $\forceP_{[\kappa, \delta)}$ is a nicely supported iteration of semiproper, Suslin tree preserving notions of forcing over the ground model $V[G_{\kappa}]$. Thus every  $H(\omega_2)^{V[G_{\kappa}]}$, for $\kappa$ Mahlo, is a suitable model which gives the assertion of the Lemma.

\end{proof}

We shall use forcing to obtain a universe in which the just defined set of suitable models 
\begin{itemize}
\item[$U'$]
$:=\{ M \, : \, \exists \kappa < \delta (\kappa \text{ is Mahlo } \land M=H(\omega_2)^{V[G_{\kappa}]} \}$
\end{itemize} 
becomes easily definable. 
This will be our first forcing in order to obtain $W_1$ over $W_0$. We note that every $M \in U'$ is itself coded by a real relative to the almost disjoint family $F$. We want to use  the variant of almost disjoint coding forcing to code up  \begin{itemize}
\item[$U$] $:= \{ r_M \, : \, r_M $ is the least almost disjoint code for a subset of $\omega_1$ which codes $M \in U' \}$
\end{itemize}
into one real $r_{U}$. Recall that $\mathbb{A}(U)$ is a forcing of size $|U|=\delta$ which is Knaster and which adds a real $r_U$ such that in $W_0^{\mathbb{A}(U)}$ the following holds:
\begin{itemize}
\item[$(\ast)$] $\forall x \in 2^{\omega} \cap W_0 \,( x \in U \leftrightarrow r_U \cap S(x)$ is finite$)$.
\end{itemize}
In a next step we will code the characteristic function of the real $r_U$ into a pattern on $\vec{T}^0$. We use the fact that a Suslin tree can generically be destroyed in two mutually exclusive ways. We can generically add a branch or generically add an antichain without adding a branch.
We fix the first $\omega$-block of independent Suslin trees $\vec{T}^0$ and we let 
$\mathbb{D} (r_U) := \prod_{n \in \omega} \forceP_n$ with finite support, where
\begin{equation*}
\forceP_n = \begin{cases}
T^{0}_n  & \text{ if } r_U(n)=1 \\ Sp(T^{0}_n) & \text{ if } r_U(n)=0
\end{cases}
\end{equation*}
and $T^{0}_n$ denotes the forcing notion one obtains when forcing with nodes of $T^{0}_n$ as conditions, and
$Sp(T^{0}_n)$ denotes Baumgartner's forcing which specializes $T^{0}_n$ with finite conditions and which is known to be ccc (see \cite{Baumgartner}).
Iterations of the just described form always have the countable chain condition.

\begin{lemma}\label{ccc Coding}
Let $\vec{T}= (T_{\alpha} \, : \, \alpha < \eta)$ be an independent sequence of Suslin trees of length $\eta$. Let $f: \eta \rightarrow 2$ be an arbitrary function and let $T_{\alpha}$ also denote the partial order when forcing with the tree $T_{\alpha}$ and let $Sp(T_{\alpha})$ be the forcing which specializes the tree $T_{\alpha}$.

Then if we consider the finitely supported product $\mathbb{D} (f) := \prod_{\beta < \eta} \forceP_{\beta}$ where
then $\mathbb{D}$ has the countable chain condition.
\end{lemma}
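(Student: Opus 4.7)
The plan is to establish ccc via the $\Delta$-system lemma, reducing the question to the ccc-ness of finite subproducts of the form $\prod_{\beta \in \sigma_1} T_\beta \times \prod_{\beta \in \sigma_0} Sp(T_\beta)$, where $\sigma_1, \sigma_0$ are disjoint finite subsets of $\eta$. Concretely, I would take an arbitrary uncountable family $\{p_i : i < \omega_1\} \subset \mathbb{D}(f)$ and apply the $\Delta$-system lemma to the finite supports $s_i$ of the $p_i$'s, obtaining an uncountable subfamily whose supports form a $\Delta$-system with finite root $\sigma$. Since two conditions with disjoint supports are automatically compatible, the question reduces to the ccc-ness of the finite product $Q := \prod_{\beta \in \sigma} \mathbb{P}_\beta$.

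Next I would partition $\sigma = \sigma_1 \cup \sigma_0$ according to whether $f(\beta) = 1$ or $f(\beta)=0$, and factor $Q = R \times S$ with $R := \prod_{\beta \in \sigma_1} T_\beta$ and $S := \prod_{\beta \in \sigma_0} Sp(T_\beta)$. The factor $R$ is a finite product of trees drawn from the independent family $\vec{T}$, hence is itself a Suslin tree by the very definition of independence, and in particular is ccc. For $S$, the key observation is that in the generic extension $V[G_R]$ every $T_\beta$ with $\beta \in \sigma_0$ remains Suslin, since independence applied to the finite subfamily $\{T_\beta\} \cup \{T_{\beta'} : \beta' \in \sigma_1\}$ says exactly that their product is a Suslin tree in $V$, which is equivalent to $T_\beta$ remaining Suslin after forcing with $\prod_{\beta' \in \sigma_1} T_{\beta'}$. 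Since Baumgartner's specializing forcing on an Aronszajn tree is Knaster, each $Sp(T_\beta)$ is Knaster in $V[G_R]$, and hence the finite product $S$ is Knaster, in particular ccc, there. Thus $Q = R \times S$ is ccc as a two-step iteration of ccc forcings, completing the proof.

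The main substantive point, and therefore the only real obstacle, is the Suslinity-preservation step: that forcing with $R$ does not destroy the Suslinity of the remaining trees $T_\beta$ with $\beta \in \sigma_0$. This is exactly what the independence of $\vec{T}$ is designed to deliver, and without it the specializing factors $Sp(T_\beta)$ could become non-ccc in $V[G_R]$, breaking the argument. The rest of the proof — the $\Delta$-system reduction, the Knaster property of finite products of Knaster specializing forcings, and the fact that a two-step iteration of ccc forcings is ccc — is entirely standard.
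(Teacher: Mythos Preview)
Your overall strategy via the $\Delta$-system lemma is sound, but one step is incorrect: the claim that Baumgartner's specializing forcing $Sp(T)$ is Knaster for an Aronszajn tree $T$ is false in general, and in fact fails precisely when $T$ is Suslin --- which is exactly the situation you are in after forcing with $R$. To see this, pick nodes $t_\alpha \in T$ on level $\alpha$ for each $\alpha < \omega_1$ and set $p_\alpha := \{(t_\alpha, 0)\}$. Then $p_\alpha$ and $p_\beta$ are compatible in $Sp(T)$ if and only if $t_\alpha$ and $t_\beta$ are incomparable in $T$, so an uncountable linked subfamily of the $p_\alpha$'s would yield an uncountable antichain in $T$, which is impossible when $T$ is Suslin. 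Hence you cannot conclude that the finite product $S$ is ccc via Knaster-ness of the factors. The fix is cheap: observe that $\prod_{\beta \in \sigma_0} Sp(T_\beta)$ is canonically isomorphic to $Sp\bigl(\bigoplus_{\beta \in \sigma_0} T_\beta\bigr)$, the specializing forcing for the disjoint sum of the trees; since each $T_\beta$ remains Aronszajn in $V[G_R]$ (you already argued they are Suslin there), so does their disjoint sum, and the usual Baumgartner argument gives that $S$ is ccc in $V[G_R]$. With this correction your proof goes through.

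For comparison, the paper takes a different route: it argues by induction on the length $\eta$, peeling off the last factor. The key observation there is that forcing with $T_\eta$ first leaves $(T_\alpha : \alpha < \eta)$ independent in $V^{T_\eta}$ (this is exactly independence applied to sets of the form $e \cup \{\eta\}$), so the inductive hypothesis makes $\prod_{\alpha < \eta} \forceP_\alpha$ ccc in $V^{T_\eta}$; rearranging, $T_\eta$ is Suslin after $\prod_{\alpha < \eta} \forceP_\alpha$, so either choice for $\forceP_\eta$ is ccc there. This inductive approach never needs to handle a product of several specializing forcings at once, which is why the paper avoids the pitfall you ran into. Your direct $\Delta$-system reduction is a legitimate alternative and arguably more transparent once the product-of-specializations issue is handled correctly.
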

\begin{proof}
Fix an arbitrary $f: \eta \rightarrow 2$. We prove the Lemma using induction over the length $\eta$. The limit case is true as we use finite support. Thus assume the assertion of the Lemma is true for $\eta$ and we want to show it is true for products of length $\eta+1$. Assume for a contradiction that $\prod_{\alpha < \eta+1}\forceP_{\alpha}$ (according to $f$) does not have the countable chain condition.
Hence the tree $T_{\eta}$ is not a Suslin tree in $V^{\prod_{\alpha < \eta} \forceP_{\alpha}}$, as otherwise both forcings $T_{\eta}$ and $Sp(T_{\eta})$ would have the countable chain condition.
But $\prod_{\alpha < \eta} \forceP_{\alpha} \ast T_{\eta} = \prod_{\alpha < \eta} \forceP_{\alpha} \times T_{\eta} = T_{\eta} \times \prod_{\alpha < \eta} \forceP_{\alpha}$, and the latter is a forcing with the countable chain condition. Indeed as $T_{\eta}$ does not touch the Susliness of any member of $\vec{T}$ besides $T_{\eta}$,  $(T_{\alpha} \, : \, \alpha < \eta)$ is an independent sequence of Suslin trees in $V^{T_{\eta}}$, thus
by induction hypothesis,  $\prod_{\alpha < \eta} \forceP_{\alpha}$ has the countable chain condition in $V^{T_{\eta}}$, so it has the countable chain condition in $V$.
\end{proof}

In particular this means that $\mathbb{D} (r_U)$ as defined above is a forcing with the countable chain condition which writes the real $r_U$ into a pattern of 0 and 1's on the sequence $\vec{T}^0$ of Suslin trees. We
let $H_0$ be
a generic filter for $\mathbb{A}_U$ over $W_0$ and
let
 $H_1$ denote a $W_0[H_0]$-generic filter for $\mathbb{D} (r_U)$. The resulting model $W_1:=W_0[H_0][H_1]$ is the ground model for a second iteration we define later.
 
\begin{lemma}
 Let $W_1$ be the universe $W_0[H_0][H_1]$

 Then
 $W_1:= W_0[H_0][H_1]$ is a ccc extension of $W_0$ which satisfies:
 \begin{itemize}
 \item $ r_U(n)=1$ if and only if $T^{0}_n$ has a branch.
 \item $ r_U(n)=0$ if and only if $T^{0}_n$ is special.
\end{itemize}
\end{lemma}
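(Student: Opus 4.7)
The plan is to verify the two claims of the lemma separately: first that the two-step extension $W_0 \subset W_0[H_0] \subset W_0[H_0][H_1]$ is ccc over $W_0$, and then that the coding pattern on $\vec{T}^0$ correctly records the bits of $r_U$.

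For the ccc claim, I would proceed in stages. The first forcing $\mathbb{A}(U)$ was already noted to be Knaster, hence ccc. The key step is to check that $\vec{T}^0$ remains an independent sequence of Suslin trees after passing to $W_0[H_0]$: since any finite tree product $T^0_{i_0} \times \cdots \times T^0_{i_k}$ is Suslin (hence Knaster) in $W_0$ and $\mathbb{A}(U)$ is Knaster, the full product $T^0_{i_0} \times \cdots \times T^0_{i_k} \times \mathbb{A}(U)$ is Knaster and therefore ccc. Consequently each finite subproduct remains ccc (i.e.\ Suslin) in $W_0[H_0]$, so $\vec{T}^0$ is still an independent sequence of Suslin trees there. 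Applying Lemma~\ref{ccc Coding} in $W_0[H_0]$ with the sequence $\vec{T}^0$ and the function $f = r_U : \omega \to 2$, we conclude that $\mathbb{D}(r_U)$ is ccc in $W_0[H_0]$. A two-step iteration $\mathbb{A}(U) \ast \mathbb{D}(\dot r_U)$ of ccc forcings is ccc, giving the first assertion.

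For the coding pattern, observe that the definition of $\mathbb{D}(r_U)$ forces factor-wise according to $r_U$. If $r_U(n) = 1$, the $n$-th factor is the forcing $T^0_n$ itself, whose generic filter is precisely a cofinal branch through $T^0_n$; hence $T^0_n$ acquires an uncountable branch in $W_1$. If $r_U(n) = 0$, the $n$-th factor is Baumgartner's specializing forcing $Sp(T^0_n)$, whose generic adds a specializing function $f : T^0_n \to \omega$. Because a tree that carries a specializing function cannot possess an uncountable branch (its restriction to any such branch would be an injection into $\omega$), the two alternatives are mutually exclusive. This mutual exclusivity yields both directions of each biconditional: ``$T^0_n$ has a branch in $W_1$'' is equivalent to ``$r_U(n) = 1$'', and ``$T^0_n$ is special in $W_1$'' is equivalent to ``$r_U(n) = 0$''.

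I do not anticipate a serious obstacle in this lemma; the content is mostly bookkeeping combining the Knaster property of $\mathbb{A}(U)$, the preservation of independence of $\vec{T}^0$ under Knaster forcing, and Lemma~\ref{ccc Coding}. The one step that deserves care is the preservation of the independence of $\vec{T}^0$ after $\mathbb{A}(U)$, since this is what legitimizes invoking Lemma~\ref{ccc Coding} in the intermediate model $W_0[H_0]$; the Knaster-times-Suslin argument sketched above handles it cleanly.
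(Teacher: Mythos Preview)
Your overall approach matches the paper's (which is much terser: it simply notes that $\mathbb{A}(U)$ is Knaster and therefore Suslin-tree preserving, leaving the rest implicit), and your treatment of the coding pattern via mutual exclusivity of branches and specializing functions is correct.

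There is, however, one slip in your ccc argument: a Suslin tree is \emph{not} Knaster. Knaster would require every uncountable set of conditions to contain an uncountable pairwise-compatible subset, i.e.\ an uncountable chain, but a Suslin tree has no uncountable branches. The fix is immediate: you only need that the product of a Knaster forcing with a ccc forcing is ccc, so $\mathbb{A}(U) \times \prod_{j \le k} T^0_{i_j}$ is ccc because $\mathbb{A}(U)$ is Knaster and the finite tree product is ccc (being Suslin). This still yields that each finite subproduct of $\vec{T}^0$ remains ccc, hence Suslin, in $W_0[H_0]$, and your invocation of Lemma~\ref{ccc Coding} then goes through as written.
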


\begin{proof}
It suffices to show that $W_0[H_0]$ is a Suslin-tree-preserving extension of $W_0$. This is clear as $\mathbb{A}(U)$ is Knaster. 

\end{proof}
To summarize we arrived at a situation where the real $r_U$, which captures all the information about a set of suitable models, is written into the sequence of Suslin trees $\vec{T}^0$. Consequentially, any transitive, $\aleph_1$-sized model  $M$ which contains $\vec{C}$, $\vec{T}^0$ and which sees that every tree in $\vec{T}$ is destroyed can compute the real $r_U$ and thus has access to a set of suitable models, which in turn can be used to compute $\vec{T}$ inside $M$ in a correct way. This line of reasoning remains sound in all outer ccc extensions of $W_1$ as we shall see. Thus the ability of finding $\vec{T}$ in suitable models gives rise to the possibility of using $\vec{T}$ for additional coding arguments over $W_1$.

\begin{lemma}\label{sigma_1 set of suitable models}
 Let $W^{\ast}$ be a ccc extension of $W_1$ then there is a $\Sigma_1(\vec{C}, \vec{T}^0)$-formula $\Phi(v)$ such that whenever $x \in 2^{\omega}$ and $W^{\ast} \models \Phi(x)$ then $x$ is the almost disjoint code for a suitable model. 
\end{lemma}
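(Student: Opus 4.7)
The plan is to craft $\Phi(x)$ as a single-existential formula whose witness is a transitive $\ZFP$-model $N$ performing two jobs at once: housing a pre-suitable sub-model $M$ containing $x$, and decoding the real $r_U$ from the pattern of branches and specializing functions imprinted on $\vec{T}^0$ by $\mathbb{D}(r_U)$. Concretely, $\Phi(x)$ asserts that there exists a transitive model $N$ of $\ZFP$ with $\omega_1 \subset N$ and $\vec{C}, \vec{T}^0, x \in N$ such that $N$ satisfies: (i) there is a pre-suitable model $M$ with $x \in M$; (ii) for every $n \in \omega$, either there is a cofinal branch through $T^{0}_n$ or there is a specializing function $T^{0}_n \to \omega$; and (iii) the real $r \in 2^{\omega}$ defined by $r(n)=1$ iff a cofinal branch through $T^{0}_n$ exists satisfies that $r \cap S(x)$ is finite. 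Because pre-suitability is already internally $\Sigma_1(\vec{C}, \vec{T}^0)$ and conditions (ii), (iii) are first-order in $N$, the single outer existential keeps $\Phi$ of the desired complexity $\Sigma_1(\vec{C}, \vec{T}^0)$.

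For the implication, assume $W^{\ast} \models \Phi(x)$ as witnessed by $N$, $M \in N$, and the real $r \in N$. Since $M$ is pre-suitable in the ccc extension $W^{\ast}$ of $W_0$, the remark following the definition of pre-suitability gives $M \subset W_0$, hence $x \in W_0$. The heart of the argument is to show that the decoded real $r$ coincides with $r_U$. This rests on mutual exclusivity: in $W_1$, the forcing $\mathbb{D}(r_U)$ ensured that $T^{0}_n$ carries a cofinal branch iff $r_U(n)=1$ and is specialized iff $r_U(n)=0$, and both properties are preserved in any further ccc extension, since a specialized tree cannot acquire an uncountable chain in any outer model and a tree carrying a cofinal branch cannot admit a specializing function into $\omega$. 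Thus in $W^{\ast}$ the two alternatives in (ii) remain mutually exclusive, so whatever witness $N$ picks at coordinate $n$ forces $r(n)=r_U(n)$.

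With $r=r_U$ in hand, the condition $r \cap S(x)$ finite together with the defining property $(\ast)$ of the Harrington-style almost disjoint coding $\mathbb{A}(U)$ yields $x \in U$, which is precisely the statement that $x$ is the almost disjoint code of an element of $U'$, i.e.\ of a suitable model. The only substantive technical point I anticipate is absoluteness of the branch/specialization witnesses between $N$ and $W^{\ast}$: because $\omega_1 \subset N$, an $\omega_1$-long chain living in $N$ really is a cofinal branch of $T^{0}_n$ in $W^{\ast}$, and similarly for specializing functions. This is what guarantees that what $N$ reads off $\vec{T}^0$ is the true imprinted pattern $r_U$ rather than some spurious string. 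Given this absoluteness-plus-exclusivity step, the $\Sigma_1$ book-keeping for $\Phi$ itself is routine.
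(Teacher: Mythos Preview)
Your proposal is correct and follows essentially the same route as the paper: a single outer existential over a transitive $\ZFP$-model $N$ containing $\vec{C},\vec{T}^0$, which internally sees a full branch/specialization pattern on $\vec{T}^0$ and a pre-suitable sub-model containing $x$, together with the almost-disjoint condition $S(x)\cap r$ finite. Your justification that the decoded pattern must equal $r_U$ (via mutual exclusivity of ``has a cofinal branch'' and ``is special'' and their upwards absoluteness to $W^{\ast}$) is in fact more explicit than the paper's one-line appeal to ``the unique pattern from $W_1$''; otherwise the arguments coincide.
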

\begin{proof}
The formula $\Phi(x)$, is defined as follows:

 \begin{itemize}
 \item[$\Phi(x)$]   if and only if there is a transitive, $\aleph_1$-sized $\ZFP$ model $N$ which contains $\vec{C}$ and $\vec{T}^0$ such that the following holds in $N$:  
   \begin{itemize}
    \item $N$ sees a full pattern on $\vec{T}^0$, i.e. for every $n \in \omega$ and 
   every member $T^0_{n}$, $N$ has either a branch through $T^0_{n}$ 
   or a function which specializes $T^0_{n}$. 
\item  The pattern on $\vec{T}^0$ corresponds to the characteristic function of a real $r$ and $N$ thinks that there is a pre-suitable $N'$ such that $x \in N'$ and $(S(x) \cap r)$ is a finite set.
 \end{itemize}
   \end{itemize}
This is a $\Sigma_1(\vec{C}, \vec{T}^0)$-formula, as it is of the form $\exists N( N\models ...)$. 
We shall show that whenever $x$ is a real from $W^{\ast}$ such that $\Phi(x)$ holds then $x$ is the almost disjoint code for a suitable model.
Recall the set $U'$ and $U$ we defined above.
Note first that $N$ has access to the set $U$ which is the set of reals which are codes for the set of suitable models $U'$.
This is clear as $\vec{T}^0 \in N$, thus if $N$ sees a pattern on $\vec{T}^0$, this pattern must be the unique pattern from $W_1$, which corresponds to the characteristic function of the real $r_U$. The statement "$N'$ is a pre-suitable model" is a $\Delta_1(\vec{C})$ formula in $N'$, thus absolute for the transitive $N$. So if $N$ thinks that there is a pre-suitable $N'$ which contains $x$ as an element, then this is true in $W^{\ast}$. As $W^{\ast}$ is a ccc extension of $W_1$ and hence of $W_0$ as well, we know already that we can express the statement "$y \in 2^{\omega} \cap W_0$" in $W^{\ast}$ as "$\exists P (P$ is pre-suitable and $y \in P)$". Thus if $N$ thinks that there is a pre-suitable $N'$ which contains $x$ as an element, then $x \in W_0$, and now $(\ast)$ from above applies to conclude that indeed $x \in U$ which is what we wanted.
\end{proof}
\subsubsection{Coding machinery}
We work now over $W_1$ and start a finite support iteration $(\forceP_{\alpha},\forceQ_{\alpha} \, : \, \alpha < \omega_2)$ of length $\omega_2=\delta$. Our goal is to define a $\Sigma^1_4$-predicate $\sigma(x)$ such that
\begin{enumerate}
\item $W_1 \models \lnot \exists x \sigma (x)$.
\item For an arbitrary real $x$, there is a forcing $\operatorname{Code} (x)$ which has size $\aleph_1$ and has the ccc and which has the effect that $W_1^{\operatorname{Code} (x)} \models \sigma (x)$, yet for every real $y \ne x$, 
$W_1^{\operatorname{Code} (x)} \models \lnot \sigma (y)$
\end{enumerate}

We shall define the coding forcing $\operatorname{Code} (x)$ along our desired $\Sigma^1_4$-formula $\sigma$. Assume that $x \in 2^{\omega}$ is an arbitrary real in $W_1$.
In a first step we fix the first $\omega$-block $\vec{T}^1$ from $\vec{T}$ and use the same technique as in Lemma \ref{ccc Coding} to write $x$ into $\vec{T}^1$, i.e. we first let the first factor of $\operatorname{Code}(x)$ be the finitely supported product of factors defined as follows:
\begin{equation*}
\forceP_{n } = \begin{cases}
T^1_{n}  & \text{ if } x(n)=1 \\ Sp(T^1_{n}) & \text{ if } x(n)=0
\end{cases}
\end{equation*}

To form the second forcing we need to find a ``nice$"$ subset of $\omega_1$ which codes all the branches and specializing functions along with some suitable models.
Let $A$ be a suitable model which computes $\vec{T}^1$ correctly.
We collect the set $M_1 | \omega_1$, the relevant branches and specializing functions through elements of $\vec{T}^0$ which create the pattern which codes up $r_U$, the relevant reflecting sequences to define the suitable $A$ and write everything into one set $X \subset \omega_1$. 
Note that i if $L_{\zeta}[X]$ is a $\ZFP$-model which contains $X \subset \omega_1$, we obtain that
\begin{align*}
L_{\zeta} [X] \models &n \in x \rightarrow T^1_{n} \text{ has an $\omega_1$-branch and } \\& 
n \notin x \rightarrow T^1_{n}  \text{ has a specializing function}
\end{align*}
where we define $\vec{T}^1$ inside $A \in L_{\zeta} [X]$ and $A$'s suitability is confirmed via $M_1 | \omega_1$ and the $M_1 | \omega_1$-definable trees $\vec{T}^0$.

We first note that any transitive, $\aleph_1$-sized $\ZFP$ model $M$ which contains $X$ will satisfy
\begin{align*}
(M, \in,  \mathcal{J}^{M_1}_{\omega_1}) \models &``\text{Decoding X yields several objects namely} \\& \text{ a model $m$ and $m=\mathcal{J}^{M_1}_{\omega_1}$}=\bigcup_{\mathcal{J}^{M_1}_{\eta} \in \mathcal{I}} \mathcal{J}^{M_1}_{\eta},  \\ & \text{ a suitable model $a$ which in turn computes trees $\vec{t^1}$ } \\& \text{some branches $\vec{b}$ and specializing functions $\vec{f}$ through $\vec{t^1}$},\\& \text{ such that for any $\eta< \omega_2$ such that $L_{\eta}[a][\vec{b}] [\vec{f}] \models \ZFP$} 	 \\&
			L_{\eta}[a][\vec{b}] [\vec{f}] \models  \forall n  ( n \in x \rightarrow T^1_{n}  \text{ has an $\omega_1$-branch}  \\&
		\qquad \qquad  	n \notin x \rightarrow T^1_{n}  \text{ has a specializing function})
			\end{align*}
In particular, this will be true for a $\ZFP +`` \aleph_1$ exists$"$ model of the form $(L_{\xi}[X],\in, \mathcal{J}^{M_1}_{\omega_1})$, $\xi < \aleph_2$.
If we consider the club \[ C:= \{ \eta < \omega_1 \, : \, \exists (M, \in, P) \prec (L_{\xi}[X],\in,\mathcal{J}^{M_1}_{\omega_1})( |M|=\aleph_0 \land \eta= \omega_1 \cap M) \}\]
then if $(N, \in)$ is an arbitrary countable transitive model of $\ZFP$ such that
$X\cap \omega_1^N \in N$ and $\omega_1^N  \in C$, then $N$ will decode out of $X \cap \omega_1^N$ exactly what $(\bar{M},\in,\mathcal{J}^{M_1}_{\eta})$ decodes out of $X \cap \omega_1^N$, where the latter is the transitive collapse of  $(M,\in, P) \prec (L_{\xi}[X],\in, \mathcal{J}^{M_1}_{\omega_1})$, where  $X \in M,  M \cap \omega_1= \omega_1^N$. In particular, if we denote the $\Delta_1$-definable decoding functions with $dec_1,dec_2$, $dec_3$ and $dec_4$ respectively, then we obtain 
\begin{align*}
N \models \exists m_1 \, \exists \vec{c} \, \exists \vec{b}, \vec{f} (& dec_1(X\cap \omega_1^N)=m_1 \land dec_2(X \cap \omega_1^N)= a \\& \land dec_3(X \cap \omega_1^N)=\vec{b} \land dec_4(X\cap \omega_1^N)=\vec{f} \\& \text{ and for any $\ZFP$ model of the form
$L_{\eta}[X \cap \omega_1^N]$ we have that} \\&
			L_{\eta} [X \cap \omega_1^N] \models  \forall n ( n \in x\rightarrow T^1_{n}  \text{ has an $\omega_1$-branch}  \\&
		 \qquad \qquad \qquad  \quad \quad  	n \notin x\rightarrow T^1_{n}  \text{ is special})) ).
\end{align*}
Further, as $dec_1(X \cap \omega_1^N)=m_1=\mathcal{J}^{M_1}_{\eta}$, we get that
\[ dec_1(X \cap \omega_1^N) \in \mathcal{I}.\]

Now let the set $Y\subset \omega_1$ code the pair $(C, X)$ such that the odd entries of $Y$ should code $X$ and if $Y_0:=E(Y)$ where the latter is the set of even entries of $Y$ and $\{c_{\alpha} \, : \, \alpha < \omega_1\}$ is the enumeration of $C$ then
\begin{enumerate}
\item $E(Y) \cap \omega$ codes a well-ordering of type $c_0$.
\item $E(Y) \cap [\omega, c_0) = \emptyset$.
\item For all $\beta$, $E(Y) \cap [c_{\beta}, c_{\beta} + \omega)$ codes a well-ordering of type $c_{\beta+1}$.
\item For all $\beta$, $E(Y) \cap [c_{\beta}+\omega, c_{\beta+1})= \emptyset$.
\end{enumerate}
We obtain a version of the  which works for suitable countable transitive models:
\begin{itemize}
\item[] Let $M$ be an arbitrary countable transitive model of $\ZFP + `` \aleph_1$ exists$"$ for which 
there is a $\mathcal{J}^{M_1}_{\eta} \in \mathcal{I}$ such that $\omega_1^M=\omega_1^{\mathcal{J}^{M_1}_{\eta}}$ and $\mathcal{J}^{M_1}_{\eta} \in M$. Assume that $Y \cap \omega_1^M \in M$ then $M$ can decode out of  $Y \cap \omega_1$, 
\begin{itemize}
\item a model $m$,
\item a model $a$ which, $M$ believes, is suitable, and which computes $\vec{t}^1$,
\item a set of branches $\vec{b}$ through $\vec{t}^1$.
\item and specializing functions $\vec{f}$ through elements of $\vec{t}^1$ such that for any $\ZFP+ ``\aleph_1$ exists$"$-model of the form $L_{\zeta} [a,\vec{b},\vec{f}]$:
\begin{align*}
L_{\zeta} [a,\vec{b},\vec{f} \models &\forall n ( n \in x \rightarrow t^1_{n}  \text{ has an $\omega_1$-branch}  \\&
		 \qquad   	n \notin x \rightarrow t^1_{n}  \text{ is special})) ).
\end{align*}

\end{itemize}
Moreover $m$ is an $M_1$ initial segment as seen from the outside, i.e. $m = \mathcal{J}^{M_1}_{\eta} \in \mathcal{I}$.
\end{itemize}
In the last step, we use almost disjoint coding forcing relative to the $\mathcal{J}^{M_1}_{\omega_1}$-definable almost disjoint family of reals to obtain a real $r_Y$ which codes our set $Y \subset \omega_1$. Thus we obtain the following formula $\psi(x,r_Y)$ holds, where $\psi(x, r_Y)$ is defined to be the formula from above:
\begin{itemize}
\item[] Assume $M$ is a countable transitive model of $\ZFP+ ``\aleph_1$ exists$"$, and $r_Y \in M$. Assume that for $M$ 
there is a $\mathcal{J}^{M_1}_{\eta} \in \mathcal{I}$ such that $\omega_1^M=\omega_1^{\mathcal{J}^{M_1}_{\eta}}$ and $\mathcal{J}^{M_1}_{\eta} \in M$.  Assume further that $r_Y \in M$ then $M$, relative to the a.d. family of reals from $\mathcal{J}^{M_1}_{\eta}$, can decode out of  $r_Y$ the following
\begin{itemize}
\item a transitive model $m$ of $\ZFP$ and ``$\aleph_1$ exists$"$,
\item a model $a$ which, $M$ believes, is suitable, and which computes $\vec{t}^1$,
\item a set of branches $\vec{b}$ through $\vec{t}^1$.
\item and specializing functions $\vec{f}$ through elements of $\vec{t}^1$ such that for any $\ZFP+ ``\aleph_1$ exists$"$-model of the form $L_{\zeta} [a,\vec{b},\vec{f}]$ (computed in $m$):
\begin{align*}
L_{\zeta} [a,\vec{b},\vec{f} \models &\forall n ( n \in x \rightarrow t^1_{n}  \text{ has an $\omega_1$-branch}  \\&
		 \qquad   	n \notin x \rightarrow t^1_{n}  \text{ is special})) ).
\end{align*}

\end{itemize}
Moreover $m$ is an $M_1$ initial segment as seen from the outside, i.e. $m = \mathcal{J}^{M_1}_{\eta} \in \mathcal{I}$.
\end{itemize}
A straightforward calculation shows that the statement $\psi(x,r_Y)$ is of the form $ (\Sigma^1_3 \rightarrow \Pi^1_3)$, thus it is a $\Pi^1_3$-formula, and stating the existence of such a real $r_Y$ is $\Sigma^1_4$ and results in our desired $\Sigma^1_4$-formula $\sigma$:
\[ \sigma(x) \equiv \exists r \psi (x,r). \]

The existence of a real $r$ witnessing $\psi(x,r)$ is sufficient to conclude that $L[r]$ contains branches through $\aleph_1$-many trees from $\vec{S}$.
\begin{lemma}\label{definable well-order of reals determines real world}
Let $r$ be such that $\psi(x,r)$ is true.
Then, working inside $L[r]$, there is a suitable $A$ such that $A$ computes $\vec{T}^1$ and there are branches $\vec{b}$ and specializing functions $\vec{f}$ such that
\begin{align*}
\forall n (&n \in x \rightarrow T^1_{n} \text{ has an $\omega_1$-branch} \\& 
n \notin x \rightarrow T^1_n \text{ is special} )
\end{align*}
\end{lemma}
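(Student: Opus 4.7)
The plan is to apply the assumption $\psi(x,r)$ inside $L[r]$ to a countable transitive model that we build there. Recall that $\psi(x,r)$ has the universal shape: for every countable transitive model $M\models\ZFP+$``$\aleph_1$ exists'' containing $r$ together with some $\mathcal{J}^{M_1}_\eta\in\mathcal{I}$ satisfying $\omega_1^M=\omega_1^{\mathcal{J}^{M_1}_\eta}$, $M$ decodes from $r$ (via the a.d.\ family of $\mathcal{J}^{M_1}_\eta$) a pre-suitable model $a$, branches $\vec{b}$, specializing functions $\vec{f}$, and a mouse $m$ witnessing the required coding pattern. So once such an $M$ is produced inside $L[r]$, the decoded witnesses automatically live in $M\subseteq L[r]$ and we can read off the conclusion.

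The first and most delicate step is to ensure $L[r]$ sees cofinally (in $\omega_1^{L[r]}$) many mice $\mathcal{J}^{M_1}_\eta\in\mathcal{I}$. Since $\Pi^1_2$-iterability is $\Pi^1_2$ in the real coding a premouse, Shoenfield absoluteness gives $\mathcal{I}^{L[r]}=\mathcal{I}\cap L[r]$, and by the folklore comparison lemma every element of $\mathcal{I}^{L[r]}$ is a genuine initial segment of $M_1$. A relativized Jensen-style construction inside $L[r]$, using the $\Sigma^1_3$-wellorder of $M_1$-reals, then produces such mice at cofinally many countable heights. Fixing such an $\eta$, a long enough $L_\xi[r,\mathcal{J}^{M_1}_\eta]$, and, if necessary, taking a countable Skolem hull and transitively collapsing so that $\omega_1^M=\eta$, we obtain an $M\in L[r]$ meeting the hypothesis of $\psi(x,r)$.

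Applying $\psi(x,r)$, true in $V$ by hypothesis, to this $M$ yields witnesses $a,\vec{b},\vec{f}\in M\subseteq L[r]$ together with a mouse $m=\mathcal{J}^{M_1}_{\eta'}$. Pre-suitability of $a$ is a $\Sigma_1(\vec{C},\vec{T}^0)$-property and hence upward absolute from $M$ to $L[r]$, so $a$ remains pre-suitable there; the earlier lemma that suitable models correctly compute $\vec{T}$ then identifies the internal $\vec{t}^1$ of $a$ with the $\vec{T}^1$ defined inside $L[r]$ via the wellorder $\unlhd$. The decoded $\vec{b},\vec{f}$ provide, for each $n$, either a cofinal branch through $T^1_{n}$ when $n\in x$ or a specialization when $n\notin x$, both of which are upward absolute to $L[r]$. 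Setting $A:=a$ completes the argument. The main obstacle is the first step, namely securing enough $\mathcal{J}^{M_1}_\eta\in\mathcal{I}$ inside $L[r]$ at the right heights; once mouse existence is in place, everything else reduces to tracking the $\Sigma_1$-absoluteness of pre-suitability and of the presence of branches and specializations.
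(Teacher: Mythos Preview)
Your argument has a genuine gap at the end. You fix a single countable $M\in L[r]$ with $\omega_1^M=\eta<\omega_1$, apply $\psi(x,r)$ to it, and obtain $a,\vec b,\vec f\in M$. But then everything you have is countable: the model $a$ has ordinal height $\omega_1^M<\omega_1$, the trees $\vec t^{\,1}$ it computes have height $\omega_1^M$, and the branches in $\vec b$ are cofinal only through these countable-height trees. Pre-suitability is not merely a $\Sigma_1(\vec C,\vec T^0)$ property in the way you use it: by definition a pre-suitable model must have size $\aleph_1$ and contain $M_1\!\mid\!\omega_1$, so your countable $a$ is \emph{not} pre-suitable in $L[r]$, and the lemma on correct computation of $\vec T$ does not apply to it. Likewise ``$T^1_n$ has an $\omega_1$-branch'' is not witnessed by a branch of length $\eta<\omega_1$, so the upward-absoluteness claim for $\vec b,\vec f$ fails.

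What the paper does---and what you are missing---is exactly the step from countable to uncountable. First one observes (by a downward L\"owenheim--Skolem contrapositive) that the universal statement $\psi(x,r)$ remains true when the countable $M$ is replaced by an arbitrary uncountable transitive model containing $r$ and $\mathcal J^{M_1}_{\omega_1}$. Equivalently, one applies $\psi(x,r)$ to a cofinal-in-$\omega_1$ family of countable models and takes the \emph{union} of the decoded objects $m,a,\vec b,\vec f$; the coding via $Y$ and its initial segments guarantees these decodings cohere along the tower. The resulting unions $M=M_1\!\mid\!\omega_1$, $A$, $\vec B$, $\vec F$ are the genuine $\aleph_1$-sized witnesses in $L[r]$, and only then does the correctness lemma for suitable models and the upward absoluteness of branches and specializations finish the argument. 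You set up the cofinal family of mice correctly in your first step, but then abandon it by ``fixing such an $\eta$''; carrying the whole tower through is the missing idea.
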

\begin{proof}
We note first that $\psi(x,r)$ must also be true (ignoring its statements involving $\mathcal{I}$) for models of uncountable size where we replace $\mathcal{J}^{M_1}_{\eta}$, in $\psi$, with $\mathcal{J}^{M_1}_{\omega_1}$. Indeed, if $M$ would be an uncountable, transitive model containing $r$ and $\mathcal{J}^{M_1}_{\omega_1}$ for which $\psi(x,r)$ is false, then we let $\bar{N}$ be the transitive collapse of $N \prec M$, $r, \mathcal{J}^{M_1}_{\eta} \in N$ and $\bar{N}$ would reject $\psi(x,r)$ as well, even though $\bar{N}$ is of the right form, which gives us a contradiction.

But if $\psi(x,r)$ holds for arbitrarily large models $M$, it must be true in the universe $L[r]$. Indeed if some $\aleph_1$-sized  $\ZFP$-model of the form $L_{\zeta} [M,A, \vec{B}, \vec{F}]$, where $M,A, \vec{B}, \vec{F}$ are just the unions of the computations of $m,a, \vec{b}$ and $\vec{f}$ in suitable countable transitive models of increasing (with limit $\omega_1$) ordinal height, then first note that $M=M_1 | \omega_1$ and $L_{\zeta} [M,A, \vec{B}, \vec{F}]$ sees that there are branches $\vec{B}$ and specializing functions $\vec{F}$ such that 
\begin{align*}
 L_{\zeta} [M,A, \vec{B}, \vec{F}] \models  &\forall  n (n  \in x  \rightarrow T^1_{n}  \text{ has an $\omega_1$-branch}  \land \\&
		 \quad  	n \notin x \rightarrow T^1_{n}  \text{ is special})) .
\end{align*}
and $L_{\zeta} [M,A, \vec{B}, \vec{F}] $'s computation of $\vec{T^1}$ must be correct. As the existence of an $\omega_1$-branch through $T^1_{n}$, and the existence of a specializing function for $T^1_n$ is upwards absolute we obtain the assertion of the lemma.
\end{proof}

As an immediate consequence, the truth of the $\Sigma^1_4$-formula $\sigma$ affects the surrounding universe, in that whenever $\sigma (x)$ is true, one really will find the characteristic function of $x$ written into $\vec{T}^1$ in the real world.

We note that we defined $\operatorname{Code} (x)$ with respect to $\vec{T}^1$ but this choice was not necessary and we could have defined $\operatorname{Code} (x)$ also with respect to $\vec{T}^{\alpha}$ for any $\alpha< \omega_2$. We define
\[ \operatorname{Code} (x,\eta) := ( \prod_{n  \in x}  T^{\eta}_{n} \times \prod_{n \notin x} Sp (T^{\eta}_n ) ) \ast \mathbb{A} (Y) \]
that is $\operatorname{Code} (x,\eta)$ is the coding forcing just defined, but we use the $\eta$-th $\omega$-block of $\vec{T}$ for coding and not $\vec{T}^1$.

So to summarize our discussion so far, if we let $W^{\ast}$ be our ground model, which is defined as reproducing the move from $L$ to $W$ with $M_1$ as starting point, then there is a way of coding arbitrary reals $x$  into the $\vec{S}$-sequence, and the statement ``$x$ is coded into $\vec{S}"$ is $\Sigma^1_4(x)$.

\subsection{Definition of the final iteration}

We finally have assembled all the necessary tools to define the final iteration which will yield a generic extension $W_2$ of $W_1$ with a $\Delta^1_4$-definable well-order of the reals, a $\Delta_1  ( \{ \omega_1 \} )$-definition of $\NS$ and where, additionally $\NS$ is saturated.

Our ground model is $W_1$.  We pick the G\"odel pairing function as our bookkeeping function $F: \omega_2 \times \omega_2 \rightarrow \omega_2$. Recall that $F$ is a bijection with the additional property that $\forall \beta < \omega_2( \beta \ge max((\beta_1, \beta_2))= F^{-1} (\beta)$. The iteration is defined recursively as follows. Assume we are at stage $\alpha < \omega_2$ and we have already defined $\forceQ_{\alpha}$. Let $H_{\alpha}$ be the generic filter for $\forceQ_{\alpha}$ over $W_1$. We also assume inductively that the $\alpha$-th $\omega_1$-block $\vec{T}^{\alpha}$ is still an independent sequence of Suslin trees in $W_1[H_{\alpha}]$, and that $W_1[H_{\alpha}] \models |P(\omega_1) \slash \NS)| = \aleph_2$. We want to describe the next forcing notion $\dot{\forceQ}_{\alpha}$ we will use.

\subsubsection{Towards a nice definition of stationarity}
We have to deal with two different cases, the first one concerns the definability of the nonstationary ideal. We assume that $\alpha$ is an even ordinal.
Let $(\alpha_1, \alpha_2) = F^{-1}(\alpha)$. We consider the universe $W_1[H_{\alpha_1}]$ and a fixed enumeration $(S^{\alpha_1}_i \, : \, i < \omega_2)$ of the stationary subsets of $\omega_1$ in $W_1[H_{\alpha}]$.  Then we pick the  $\alpha_2$-th element $S^{\alpha_1}_{\alpha_2}$ of $(S^{\alpha_1}_i \, : \, i < \omega_2)$ and code the characteristic function of  $S^{\alpha_1}_{\alpha_2}$ into the $\alpha$-th $\omega_1$ block $\vec{T}^{\alpha}= (T^{\alpha}_i \, : \, i < \omega_1)$ of our fixed independent sequence of Suslin trees $\vec{T}$. To be fully precise, we will not code the characteristic function of $S^{\alpha_1}_{\alpha_2}$ but instead the characteristic function of the following set 
$R^{\alpha_1}_{\alpha_2}:= \{ (\beta,0) \mid \beta \in S_{\alpha_2}^{\alpha_1} \}$, this is to distinguish the coding of stationary sets from coding reals according to our well-order of reals. 

We let $r^{\alpha_1}_{\alpha_2}: \omega_1 \rightarrow 2$ denote the characteristic function of $R^{\alpha_1}_{\alpha_2}$ . At stage $\alpha$ we will force with
$\dot{\forceQ}_{\alpha}^{G_{\alpha}} := \prod_{\beta < \omega_1} \forceP_{\beta}$ with finite support, where
\begin{equation*}
\forceP_{
\beta } = \begin{cases}
T^{\alpha}_{\beta}  & \text{ if } r^{\alpha_1}_{\alpha_2}(\beta)=1 \\ Sp(T^{\alpha}_{\beta}) & \text{ if } r^{\alpha_1}_{\alpha_2}(\beta)=0
\end{cases}
\end{equation*}
We let $h_{\alpha+1}$ be a generic filter for $\dot{\forceR}_{\alpha}$ over $W_1[H_{\alpha}]$, set $H_{\alpha+1}= H_{\alpha} \ast h_{\alpha+1}$ and continue. Note that $\dot{\forceQ}_{\alpha}$ has the countable chain condition as we assumed that $\vec{T}^{\alpha}$ is an independent sequence of Suslin trees in $W_1[H_{\alpha}]$ and by Lemma \ref{ccc Coding}. 

\subsubsection{Towards a $\Delta^1_4$-well-order of the reals}
In the second case, we assume that $\alpha$ is an odd ordinal and $F^{-1} (\alpha) = (\alpha_1,\alpha_2)$. We pass to $W_1[G_{\alpha_1}]$ and look at the $\alpha_2$-th pair of reals $(x_1,x_2) \in W_1 [G_{\alpha_1}]$. We additionally assume that the $<$-least name of $x_1$ (where $<$-denotes $M_1$'s global well-order) is below $x_2$-th least such name. We form the real 
$x^{\alpha_1}_{\alpha_2} = (1,x_1,x_2)$, and let $s^{\alpha_1}_{\alpha_2}$ be its characteristic functions and force with
\[ \dot{\forceQ}_{\alpha}^{G_{\alpha}} := \operatorname{Code} (s^{\alpha_1}_{\alpha_2}, \eta) \]
where $\eta$ is the least ordinal such that $\vec{T}^{\eta}$ is still an $\omega$-block of independent Suslin trees in our current ground model.

\subsection{Properties of the final model $W_2$}
We let $W_2:= W_1 [ G_{\omega_2} ]$, where $G_{\omega_2}$ is a $W_1$-generic filter for the just defined, $\omega_2$-length iteration $\forceP_{\omega_2}$ with finite support.
We shall prove now the property of $(\forceQ_{\alpha} \, : \, \alpha < \omega_2)$ which was assumed inductively in the definition of the iteration, namely that tails of $\vec{T}$ always remain an independent sequence of Suslin trees.

\begin{lemma}
Let $(\forceP_{\alpha}, \dot{\forceQ}_{\alpha} ) \, : \, \alpha \le \omega_2)$ be the just defined iteration over $W_1$ and let $\vec{T}$ be our sequence of independent Suslin trees. Let $\alpha< \omega_2$ and let $G_{\alpha}$ be a $\forceP_{\alpha}$-generic filter over $W_1$. We write $\vec{T}^{>\alpha'}$ for $(\vec{T}^{\beta} \,  :  \, \alpha' < \beta < \omega_2)$ where $\alpha'$ is the least ordinal such that the $\alpha'$-th $\omega$-block of $\vec{T}$ is still Suslin in $W_1[G_{\alpha}]$ . Then $W_1[G_{\alpha}]$ is a ccc extension of $W_1$ and $$ W_1[G_{\alpha}] \models \vec{T}^{>\alpha'} \text{ is an independent sequence of (blocks of) Suslin trees.}$$
Moreover at stage $\alpha$ every element of $\vec{T}^{\gamma}$, $\gamma < \alpha'$ has been destroyed.
\end{lemma}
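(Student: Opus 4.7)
The plan is to prove all assertions by a single induction on $\alpha \le \omega_2$, maintaining the joint invariant that (a) $\forceP_\alpha$ has the ccc; (b) in $W_1[G_\alpha]$ the tail $\vec{T}^{>\alpha'}$ is an independent sequence of (blocks of) Suslin trees, where $\alpha'$ is the least surviving block index; and (c) every tree in $\vec{T}^\gamma$ for $\gamma < \alpha'$ has been destroyed (via an added branch or a specializing function). The base case $\alpha = 0$ is immediate from the construction of $W_1$.

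For the successor step $\alpha \to \alpha + 1$, I would observe first that $\dot{\forceQ}_\alpha$ is, by its definition, a finite-support coding forcing supported on a single block $\vec{T}^\beta$ (here $\beta = \alpha$ in the even case, and $\beta$ is the minimal surviving block in the odd case, possibly composed with the Knaster almost-disjoint coding $\mathbb{A}(Y)$). By the induction hypothesis (b), $\vec{T}^\beta$ is an independent Suslin family in $W_1[G_\alpha]$, so Lemma \ref{ccc Coding} applies there and yields (a) at $\alpha + 1$; clause (c) is built into the definition of $\dot{\forceQ}_\alpha$.

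The heart of the argument is preserving (b) across the successor step. Pick any finite product $U$ of individual trees drawn from the surviving blocks at stage $\alpha + 1$; all of their block-indices differ from $\beta$. By (b) in $W_1[G_\alpha]$, $U$ together with any finite subproduct of $\vec{T}^\beta$ is Suslin, which is the same as saying that in $W_1[G_\alpha]^U$ the trees of $\vec{T}^\beta$ still form an independent Suslin family. Applying Lemma \ref{ccc Coding} inside $W_1[G_\alpha]^U$ gives that $\dot{\forceQ}_\alpha$ retains ccc over $W_1[G_\alpha]^U$; equivalently $U \times \dot{\forceQ}_\alpha$ is ccc in $W_1[G_\alpha]$. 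Since $U$ is Suslin in $W_1[G_\alpha]$ and $U \times \dot{\forceQ}_\alpha$ is ccc there, the standard fact that a Suslin tree stays Suslin under any forcing whose product with it is ccc yields that $U$ remains Suslin in $W_1[G_{\alpha+1}]$, which is what (b) requires at $\alpha + 1$.

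The limit case follows in the same spirit. By Solovay--Tennenbaum the finite-support iteration $\forceP_\alpha$ is ccc, giving (a). For (b), fixing a finite product $U$ of surviving trees at stage $\alpha$, I would verify that $U \times \forceP_\alpha$ is ccc in $W_1$ by a $\Delta$-system argument on finite supports combined with the successor-step commutation: at each coordinate $\alpha_0 < \alpha$ the argument above shows $U \times \dot{\forceQ}_{\alpha_0}$ is ccc in $W_1[G_{\alpha_0}]$, because by (c) the blocks from which $U$ is drawn are never equal to the block attacked by $\dot{\forceQ}_{\alpha_0}$. The main technical obstacle I foresee is precisely this commutation of $U$ through the whole iteration, i.e., keeping track via the bookkeeping that $U$'s blocks always survive at every intermediate stage so that the successor-step argument can be applied uniformly.
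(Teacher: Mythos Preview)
Your proposal is correct and is essentially a careful unfolding of what the paper treats as obvious: the paper's entire proof is the single sentence ``This follows immediately from the definition of the forcing and the independence of the sequence $\vec{T}$.'' Your inductive commutation argument---showing that for any finite product $U$ of surviving trees, $U \times \dot{\forceQ}_\alpha$ is ccc by reapplying Lemma~\ref{ccc Coding} inside $W_1[G_\alpha]^U$---is exactly the content the paper suppresses as routine, and the limit-stage worry you flag is handled by the same mechanism (the blocks touched below $\alpha$ are disjoint from those contributing to $U$, so independence in $W_1$ carries the whole product through).
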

\begin{proof}
This follows immediately from the definition of the forcing and the independence of the sequence $\vec{T}$.

\end{proof}
As a consequence the iteration $(\forceP_{\alpha} \, : \, \alpha \le \omega_2)$ is a finite support iteration of forcings with the countable chain condition, thus $\forceP_{\omega_2}$ has the countable chain condition as well.
This, plus the fact that $W_1\models 2^{\aleph_0} = 2^{\aleph_1} = \aleph_2$  readily gives that for every $\alpha < \omega_2$, if $H_{\alpha}$ denotes the generic filter for $\forceQ_{\alpha}$, then $W_1[H_{\alpha}] \models 2^{\aleph_1} = \aleph_2$. Indeed if $X \in W_1[H_{\alpha}] \cap P(\omega_1)$, then $X$ has a $\forceQ_{\alpha}$-name which is uniquely determined by a sequence $(A_{\eta} \, : \, \eta < \omega_1)$ such that every  $A_{\eta} \subset \forceQ_{\alpha}$ is a maximal antichain. There are $[\aleph_2]^{\aleph_0} = \aleph_2$-many antichains in $\forceQ_{\alpha}$, thus there are $\aleph_2$-many $\forceQ_{\alpha}$-names for subsets of $\omega_1$. As a consequence we will catch our tail after $\omega_2$ many stages.

\begin{lemma}
Let $G_{\omega_2}$ be an $W_1$-generic filter for $(\forceP_{\alpha}, \dot{\forceQ}_{\alpha} \, : \, \alpha < \omega_2)$. Then in $W_1[G_{\omega_2}]$, every stationary subset of $\omega_1$ is coded into an $\omega_1$-block of $\vec{T}$, i.e. for each $S$ stationary there is an $\alpha < \omega_2$ such that for every $\beta < \omega_1$ ( $\vec{T}^{\alpha}_{\beta}$ has a branch if and only if $\beta \in S$ and $\vec{T}^{\alpha}_{\beta}$ is special if and only if $\beta \notin S$). Here we write  $\vec{T}^{\alpha}_{\beta}$ for the $\beta$-th element of the $\alpha$-th $\omega_1$-block of Suslin trees $\vec{T}^{\alpha}$ from $\vec{T}$.
\end{lemma}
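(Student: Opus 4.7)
The plan is a standard bookkeeping-plus-preservation argument. Fix $S \subseteq \omega_1$ stationary in $W_1[G_{\omega_2}]$.

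First, I would locate $S$ at a bounded stage of the iteration. Since $\forceP_{\omega_2}$ is a finite-support iteration of ccc forcings of length $\omega_2$ over a ground model satisfying $2^{\aleph_1} = \aleph_2$ (as noted in the tail-catching discussion preceding this lemma), any nice $\forceP_{\omega_2}$-name for a subset of $\omega_1$ is determined by $\aleph_1$-many countable antichains, each of which lives in some $\forceP_{\beta}$ with $\beta < \omega_2$. Hence $S \in W_1[H_{\alpha_1}]$ for some $\alpha_1 < \omega_2$. Moreover, $S$ remains stationary in $W_1[H_{\alpha_1}]$: the tail forcing $\forceP_{[\alpha_1,\omega_2)}$ is ccc and therefore $\omega_1$-preserving, so any club $C \subseteq \omega_1 \setminus S$ in $W_1[H_{\alpha_1}]$ would remain a club of $\omega_1$ in $W_1[G_{\omega_2}]$, contradicting the stationarity of $S$ there.

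Hence $S$ appears in the fixed enumeration $(S^{\alpha_1}_i : i < \omega_2)$ of stationary subsets of $\omega_1$ in $W_1[H_{\alpha_1}]$, say $S = S^{\alpha_1}_{\alpha_2}$. Using the G\"odel-pairing bookkeeping (restricted to even ordinals, which is what handles stationary-set coding), I would choose an even $\alpha < \omega_2$ with $F^{-1}(\alpha) = (\alpha_1,\alpha_2)$. By the preceding lemma (tail independence of $\vec{T}$), at this stage the $\alpha$-th block $\vec{T}^{\alpha}$ is still an independent $\omega_1$-block of Suslin trees in $W_1[H_{\alpha}]$, so Lemma \ref{ccc Coding} applies and the factor $\dot{\forceQ}_{\alpha}$ is ccc. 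By construction, $\dot{\forceQ}_{\alpha}$ adds an $\omega_1$-branch to $T^{\alpha}_{\beta}$ exactly when $\beta$ is coded into $R^{\alpha_1}_{\alpha_2}$ (equivalently, when $\beta \in S$ under the fixed bijection $\omega_1 \times 2 \cong \omega_1$), and specializes $T^{\alpha}_{\beta}$ otherwise.

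The final step is preservation under the tail iteration $\forceP_{[\alpha+1,\omega_2)}$. Any $\omega_1$-branch added at stage $\alpha$ trivially persists in all outer models. Conversely, a specialization of a tree (an injective map on chains with values in $\omega$) absolutely prevents any chain of length $\omega_1$ in any outer model whatsoever. Hence the pattern written on $\vec{T}^{\alpha}$ at stage $\alpha$ is preserved all the way up to $W_1[G_{\omega_2}]$, giving the desired biconditional.

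The only genuine obstacle is cosmetic: one must ensure that the bookkeeping on even ordinals actually enumerates every pair $(\alpha_1,\alpha_2) \in \omega_2 \times \omega_2$, which is a routine adjustment obtained by splitting $F$ into two bookkeeping bijections on the even and odd ordinals respectively. Everything substantive is supplied by the independence of $\vec{T}$, Lemma \ref{ccc Coding}, and the absoluteness of the two mutually exclusive ways of destroying a Suslin tree.
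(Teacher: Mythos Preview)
Your proposal is correct and matches the paper's approach; in fact the paper does not give a separate proof of this lemma at all, stating it as an immediate consequence of the tail-catching discussion in the preceding paragraph, and your write-up supplies precisely the details (finding $S$ at a bounded stage by ccc plus finite support, stationarity downward by ccc of the tail, bookkeeping to locate the coding stage, and upwards absoluteness of branches and specializations) that the paper leaves implicit. Your observation about the even/odd split of the bookkeeping is also on point: as literally written the paper uses a single bijection $F$ for both cases, so a given pair $(\alpha_1,\alpha_2)$ might land at an odd ordinal; this is indeed a cosmetic slip fixed exactly as you suggest.
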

Recall that as $W_1[H_{\omega_2}]$ is a ccc extension of $W_0$, $H(\omega_2)^{W_0}$ is definable in $W_1[H_{\omega_2}]$ via the formula \begin{itemize}
\item[] $x \in H(\omega_2)^{W_0}$ if and only if $\exists M (M$ is presuitable and $x \in M)$.
\end{itemize}
As $\vec{T}$ is definable over $H(\omega_2)^{W_0}$, $\vec{T}$ is definable in $W_1[H_{\omega_2}]$ as well and so it can see the pattern that was written into $\vec{T}$. This hands us a new, definable predicate for stationary subsets of $\omega_1$. Counting quantifiers yields that the statement of the last Lemma is $\Sigma_2(\vec{C})$ over $H(\omega_2)$. Using the $\Sigma_1(\vec{C}, \vec{T}^0)$-definable set of suitable models from Lemma \ref{sigma_1 set of suitable models} will yield a $\Sigma_1(\vec{C}, \vec{T}^0)$-definition for stationarity in $W_1[H_{\omega_2}]$.
\begin{lemma}
There is a $\Sigma_1(\vec{C}, \vec{T}^0)$-formula $\Psi(S)$ which defines stationary subsets of $W_1[H_{\omega_2}]$:
\begin{itemize}
\item[$\Psi(S)$] if and only if there is an $\aleph_1$-sized, transitive model $N$ which contains $\vec{C}$ and $\vec{T}^0$ such that $N$ models that
\begin{itemize}
\item There exists a real $x$ such that $\Phi(x)$ holds, i.e. $x$ is a code for a suitable model $M$.
\item There exists an ordinal $\alpha$ in the suitable model $M$ such that $\vec{T}'$ is the $\alpha$-th $\omega_1$ block of the definable sequence of independent Suslin trees as computed in $M$ and $N$ sees a full pattern on $\vec{T}'$.
\item $\forall \beta < \omega_1$ $( \beta \in S$ if and only if $\vec{T}' (\beta)$ has a branch).
\item $\forall \beta < \omega_1$ $(\beta \notin S$ if and only if $\vec{T}' (\beta)$ is special).
\end{itemize}
\end{itemize}
Note that $\Psi(S)$ is of the form $\exists N (N \models...)$, thus $\Psi$ is a $\Sigma_1$-formula.
\end{lemma}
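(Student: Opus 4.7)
The plan is to establish that $\Psi(S)$ holds if and only if $S$ is stationary in $W_1[H_{\omega_2}]$, by proving the two implications separately and invoking the preceding lemmas to handle the definability and correctness of the various pieces.

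For the forward direction, assume $\Psi(S)$ holds, witnessed by some transitive $N$ of size $\aleph_1$ containing $\vec{C}$, $\vec{T}^0$, a real $x$, a suitable model (code) $M$, and an ordinal $\alpha$ with the $\alpha$-th block $\vec{T}'$ as computed by $M$. I would first apply Lemma \ref{sigma_1 set of suitable models} (noting that $W_1[H_{\omega_2}]$ is a ccc extension of $W_1$) to conclude that $x$ really codes a suitable model $M$ in the outer universe, not just in $N$. Next, by the correctness lemma for suitable models (the lemma immediately following the definition of suitability), the sequence $\vec{T}^M$ computed inside $M$ agrees with the true $\vec{T}$ restricted to $M\cap Ord$, so $\vec{T}'$ is genuinely the $\alpha$-th block of the $\Sigma_1(\{\omega_1\})$-definable sequence $\vec{T}$. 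The mutually exclusive nature of "having an $\omega_1$-branch" vs.\ "being special" (these properties are upwards absolute and cannot coexist for a tree of height $\omega_1$) forces the pattern witnessed by $N$ to coincide with the true pattern on $\vec{T}^{\alpha}$ in $W_1[H_{\omega_2}]$. But by construction of the second iteration, a pattern is written into $\vec{T}^{\alpha}$ only at an even stage $\alpha$ corresponding to the coding of the characteristic function $r_{\alpha_1}^{\alpha_2}$ of $R_{\alpha_2}^{\alpha_1}$ for some stationary $S_{\alpha_2}^{\alpha_1}$; the odd-stage codings write reals of the form $(1,x_1,x_2)$ whose distinguishing initial bit guarantees they cannot be mistaken for a code of $R_{\alpha_2}^{\alpha_1}$. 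Hence $S = S_{\alpha_2}^{\alpha_1}$ is stationary.

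For the backward direction, suppose $S \subseteq \omega_1$ is stationary in $W_1[H_{\omega_2}]$. By the preceding lemma, there exists an $\alpha < \omega_2$ such that $S$ is coded into the $\omega_1$-block $\vec{T}^{\alpha}$ in the canonical way. I would then produce a witness $N$ for $\Psi(S)$ as follows: pick a suitable model $M$ of size $\aleph_1$ containing the ordinal $\alpha$ (such an $M$ exists by the unboundedness lemma for the set $P$ of heights of suitable models, combined with the absoluteness of suitability between $W_0$ and the ccc extension $W_1[H_{\omega_2}]$). Let $x$ be a code for $M$ with $\Phi(x)$ holding, which exists by Lemma \ref{sigma_1 set of suitable models} and the construction of $W_1$. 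Collect into $N$ the model $M$, its code $x$, the distinguished objects $\vec{C}$ and $\vec{T}^0$, together with the $\omega_1$-many branches and specializing functions through the elements of $\vec{T}^{\alpha}$ witnessing the coding of $S$; close under $\ZFP$ to obtain an $\aleph_1$-sized transitive $N$. By construction $N$ satisfies all the clauses of $\Psi$, and the existential quantifier over $N$ gives the $\Sigma_1$ statement.

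The main obstacle I expect is the backward direction's insistence that the same pattern on $\vec{T}^{\alpha}$ that we read off in $W_1[H_{\omega_2}]$ can be captured inside an $\aleph_1$-sized transitive model $N$; this requires that both the branches and the specializing functions through an entire $\omega_1$-block simultaneously live inside $N$. One must verify that closing off the required objects under $\ZFP$ does not collapse $\omega_1$ and does not introduce any spurious additional branches through unrelated Suslin trees in $\vec{T}^0$ (which would corrupt the suitability verification through $\Phi(x)$). Both concerns are handled by standard L\"owenheim–Skolem arguments combined with the independence of $\vec{T}$: the relevant branches and specializing functions all exist in $W_1[H_{\omega_2}]$, and taking an elementary submodel of $H(\aleph_3)^{W_1[H_{\omega_2}]}$ of size $\aleph_1$ containing $\omega_1 \cup \{\vec{C}, \vec{T}^0, M, x, \alpha\}$ and the listed branches/specializing functions, then transitively collapsing, produces the required $N$. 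The remaining routine verification is the quantifier count, which matches because the outer existential over $N$ and all internal assertions are $\Sigma_1$ with parameters $\vec{C}$ and $\vec{T}^0$.
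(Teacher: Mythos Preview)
Your proposal is correct and follows the same two-direction strategy as the paper: use the correctness of suitable models to pass from a witness $N$ to a genuine pattern on $\vec T$ (and hence to stationarity), and conversely build a witness $N$ around a suitable model $M$ that sees the relevant block together with the branches/specializing functions produced by the iteration.

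In fact your write-up is more careful than the paper's in two places. First, in the backward direction the paper says ``there will be a suitable model $M$ which will contain it'' where ``it'' is the set of branches and specializing functions; but suitable models are contained in $W_0$, so they cannot literally contain these objects. You correctly separate the roles: $M$ only needs to contain $\alpha$ and compute $\vec T^{\alpha}$, while the outer $N$ (built by L\"owenheim--Skolem inside $W_1[H_{\omega_2}]$) carries the branches and specializing functions. Second, you explicitly address why a pattern read off from $\vec T$ at an $\omega_1$-block must come from an even-stage coding of a stationary set rather than an odd-stage coding of a well-order pair, using the distinguishing format $(\beta,0)$ versus $(1,x_1,x_2)$; the paper leaves this implicit.

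One small sharpening: when you write ``pick a suitable model $M$ of size $\aleph_1$ containing the ordinal $\alpha$'', you should pick $M$ from the specific family $U'=\{H(\omega_2)^{V[G_\kappa]}:\kappa\text{ Mahlo}\}$, since $\Phi(x)$ is only guaranteed to hold for the canonical codes $r_M$ of members of $U'$ (these are exactly the reals in $U$ that $r_U$ singles out). The unboundedness lemma already gives you such an $M$ above any prescribed $\alpha$, so this is a cosmetic fix.
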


\begin{proof}
We assume first that $S \subset \omega_1$ is a stationary set in $W_1[H_{\omega_2}]$. Then by the last Lemma, $S$ is coded into an $\omega_1$-block of $\vec{T}$. The set of branches and specializing functions which witness that $S$ is coded into $\vec{T}$ is a set of size $\aleph_1$, thus there will be a suitable model $M$ which will contain it. We ensured that this suitable $M$ is coded by a real $x$ in $W_1[H_{\omega_2}]$. Then any uncountable, transitive $N$ which contains $r$ is a witness for $\Psi(S)$.

If, on the other hand $\Psi(S)$ is true then the suitable model $M$ which contains $S$ and witnesses that the characteristic function of $S$ is written into an $\omega_1$-block of its local $M$-Suslin trees is also a witness that the characteristic function of $S$ can be found in some block of $\vec{T}$ in the real world $W_1[H_{\omega_2}]$. This suffices as by definition of the iteration$ (\forceQ_{\alpha}\, : \, \alpha<  \omega_2)$, the patterns on $\vec{T}$ code up stationarity on $\omega_1$.
\end{proof}
This finishes the proof of the $\Delta_1 (\{\omega_1\} )$-definability of $\NS$ over $W_2$.

We finally turn to the $\Delta^1_4$-definable well-order of the reals of $W_2$.

\begin{lemma}
In $W_2$, there is a $\Delta_4^1$-definable well-order of its reals.
\end{lemma}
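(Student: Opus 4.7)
The plan is to define $x_1 <^* x_2$ if and only if $\sigma((1,x_1,x_2))$ holds, where $\sigma$ is the $\Sigma^1_4$-predicate built in Section 3.2.2 which asserts that a given real has been written, via some $\operatorname{Code}(\cdot,\eta)$ forcing, into an $\omega$-block $\vec{T}^\eta$ of the canonical independent sequence of Suslin trees. Here $(1,x_1,x_2)$ denotes the canonical real encoding the triple, with the leading $1$ serving as a tag which distinguishes reals coded at odd stages from the characteristic functions $R^{\alpha_1}_{\alpha_2}= \{(\beta,0) \mid \beta \in S^{\alpha_1}_{\alpha_2}\}$ written in at even stages; this guarantees no accidental overlap between the two coding regimes. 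Since $\sigma$ is $\Sigma^1_4$, the relation $<^*$ is $\Sigma^1_4$, and a $\Pi^1_4$ description is obtained by trichotomy ($\neg(x_1<^* x_2)$ iff $x_1=x_2$ or $x_2<^* x_1$), yielding the desired $\Delta^1_4$-definability.

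The first substantive step is totality: for any pair of distinct reals $x_1,x_2\in W_2$ exactly one of $\sigma((1,x_1,x_2))$ or $\sigma((1,x_2,x_1))$ holds. Since $\forceP_{\omega_2}$ is ccc and $W_1\models 2^{\aleph_1}=\aleph_2$, every real in $W_2$ already appears at some intermediate stage $W_1[G_{\alpha_1}]$ with $\alpha_1<\omega_2$; the surjectivity of the G\"odel pairing $F$ together with the requirement $F^{-1}(\alpha)=(\alpha_1,\alpha_2)$ with $\alpha\ge\max(\alpha_1,\alpha_2)$ ensures that the pair $(x_1,x_2)$ (in the unique order required by the $M_1$-name bookkeeping) will be enumerated at some odd stage $\alpha>\alpha_1$. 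At that stage we force with $\operatorname{Code}((1,x_1,x_2),\eta)$ for the least $\eta$ such that $\vec{T}^\eta$ is still an independent $\omega$-block, producing the real $r_Y$ witnessing $\psi((1,x_1,x_2),r_Y)$ in $W_2$, hence $\sigma((1,x_1,x_2))$. The name-order restriction built into the odd-stage bookkeeping ensures that only one ordering of the pair is ever processed, giving antisymmetry.

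The converse direction — that $\sigma(r)$ in $W_2$ implies that $r$ is genuinely coded into some $\vec{T}^\eta$ — is the invocation of Lemma \ref{definable well-order of reals determines real world} applied inside $W_2$. Its proof relativizes, because $W_2$ is a ccc-extension of $W_0$ and hence of $W_1$, so Lemma \ref{Definability of Suslin trees} applies and $\vec{T}$ remains definable in $W_2$; moreover the $\Sigma_1(\vec{C},\vec{T}^0)$-definable class of suitable models from Lemma \ref{sigma_1 set of suitable models} is still available, so the decoding argument from $r_Y$ back to a suitable $A$ computing the correct $\vec{T}^\eta$ goes through verbatim. Combined with the independence of $\vec{T}$ (each coordinate of $\vec{T}^\eta$ is destroyed in exactly one of the two mutually exclusive ways prescribed by $(1,x_1,x_2)$), this recovers the exact characteristic function of $(1,x_1,x_2)$ and hence pins down the pair uniquely.

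The step I expect to be the main obstacle is ruling out spurious witnesses to $\sigma$ in $W_2$: in principle, iterative coding of many reals into different $\omega$-blocks of $\vec{T}$ could conspire, together with the even-stage stationary coding, to make $\sigma(r)$ accidentally hold for reals $r$ that were never explicitly coded. This is handled by the tag-bit: the odd-stage coding always writes a real whose zeroth bit is $1$, whereas the even-stage coding of $R^{\alpha_1}_{\alpha_2}$ writes a characteristic function supported on the ``$(\cdot,0)$-track''. These two kinds of patterns lie in disjoint parts of the space of possible patterns on an $\omega$-block, so no stationary-coding block can be mistaken for a real-coding block by $\psi$, and together with Lemma \ref{definable well-order of reals determines real world} this yields the uniqueness of decoding needed for $<^*$ to be a genuine well-order.
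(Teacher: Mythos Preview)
Your proposal is correct and follows essentially the same approach as the paper's own proof: define the well-order via the $\Sigma^1_4$-predicate $\sigma((1,x_1,x_2))$, use the bookkeeping to ensure every ordered pair gets coded, and invoke Lemma \ref{definable well-order of reals determines real world} to rule out spurious witnesses. You supply more detail than the paper does (the explicit trichotomy argument for $\Pi^1_4$, the careful separation of the odd-stage and even-stage coding regimes via the tag-bit), but the logical structure is identical.
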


\begin{proof}
This follows from the fact that our formula asserting a real  $x$ ``is coded into $\vec{T}"$, denoted earlier by $\exists r \psi (x,r)$ is $\Sigma^1_4$. As every pair of reals gets listed by the bookkeeping in our iteration, and as we always either code $(x,y)$ or $(y,x)$ into $\vec{T}$, we know that if $x<y$ are two reals in $W_2$, then there will be a real $r$ such that $\psi((x,y),r)$ is true in $W_2$. If on the other hand $\exists r  (\psi ((x,y),r))$  uis true in $W_2$, then by lemma \ref{definable well-order of reals determines real world}, there is a pattern on $\vec{T}$ which is exactly the characteristic function of $(1,x,y)$, and as all the patterns on $\vec{T}$ we have in $W_2$ are created intentionally, we know that indeed $x<y$ must be true, so we found the $\Sigma^1_4$ and hence the $\Delta^1_4$-definition of the well-order.

\end{proof}

This finishes the proof of the  main theorem. We add as a remark that the coding procedure is actually more general and is not restricted to code $\NS$. Indeed, fixing $W_0$ as our ground model, we can pick any subfamily $\mathcal{F}$ of $P(\omega_1)$ and can code its members into the independent sequence $\vec{T}$. If $\mathcal{F}$ is definable and is preserved under ccc extensions (such as $\mathcal{F}= \NS^{+}$), then our proof works with almost no alterations. 
\section*{Acknowledgement}
This research was funded in whole by the Austrian Science Fund (FWF) Grant-DOI 10.55776/P37228.

\end{document}